\def\serieslogo@{} \def\@setcopyright{} \makeatother
\renewcommand*\env@matrix[1][c]{\hskip -\arraycolsep
  \let\@ifnextchar\new@ifnextchar
  \array{*\c@MaxMatrixCols #1}}
\numberwithin{equation}{section}
\newtheorem{thm}{Theorem}[subsection]
\newtheorem*{main-thm}{Main Theorem}
\newtheorem*{Auslander-thm}{Auslander's Theorem}
\newtheorem{cor}[thm]{Corollary}
\newtheorem{lem}[thm]{Lemma}
\newtheorem{prop}[thm]{Proposition}
\newtheorem{mainthm}{Theorem}
\theoremstyle{definition}
\newtheorem{defn}[thm]{Definition}
\newtheorem{rem}[thm]{Remark}
\newtheorem{exam}[thm]{Example}
\newtheorem{notation}[thm]{Notation}
\newtheorem{quest}[thm]{Question}
\newcommand{\A}{\mathcal{A}}
\newcommand{\E}{\mathcal E}
\newcommand{\I}{\mathcal I}
\newcommand{\J}{\mathcal J}
\newcommand{\K}{\mathcal K}
\renewcommand{\P}{\mathcal P}
\newcommand{\Q}{\mathcal Q}
\newcommand{\R}{\mathcal R}
\newcommand{\mS}{\mathcal S}
\newcommand{\mfp}{\mathfrak{p}}
\newcommand{\unit}{\mathfrak{e}}
\newcommand{\DOT}{\setlength{\unitlength}{1.2pt}\begin{picture}(2.5,2)
          (1,1)\put(2.5,2.5){\circle*{2}}\end{picture}} % smaller dot than \bullet
\newcommand{\bu}{\DOT}
\newcommand{\cohom}{\mathsf{H}^{\bu}}
\newcommand{\sfH}{\mathsf{H}}
\newcommand{\ccent}{\mathsf{C}^{\bu}}
\newcommand{\sfC}{\mathsf{C}}
\newcommand{\HH}{\mathsf{HH}^{\bu}}
\newcommand{\ldual}{{}^{\vee}}  
\newcommand{\rdual}{{}^{\vee} \!}  
\newcommand{\ot}{\otimes}
\newcommand{\psiL}{{}_{\psi}\Lambda_1}
\newcommand{\Lpsi}{{}_1\Lambda_{\psi}}
\DeclareMathOperator*{\opp}{\mathsf{op}}
\DeclareMathOperator*{\id}{\mathsf{id}}
  \DeclareMathOperator*{\maxx}{\mathsf{max}}
\DeclareMathOperator*{\Mod}{\mathsf{Mod}}
\DeclareMathOperator{\End}{\mathsf{End}}
\DeclareMathOperator{\lrp}{\mathsf{lrp}}
\DeclareMathOperator{\lrP}{\mathsf{lrP}}
\DeclareMathOperator*{\smod}{\mathsf{mod}}
\DeclareMathOperator*{\umod}{\underline{\mathsf{mod}}}
\DeclareMathOperator*{\uMod}{\underline{\mathsf{Mod}}}
\DeclareMathOperator*{\Proj}{\mathsf{Proj}}
\DeclareMathOperator*{\MaxSpec}{\mathsf{MaxSpec}}
\DeclareMathOperator{\Thick}{\mathsf{Thick}}
\DeclareMathOperator{\Hom}{\mathsf{Hom}}
\DeclareMathOperator{\VVec}{Vec}
\DeclareMathOperator{\Ext}{\mathsf{Ext}}
   \DeclareMathOperator*{\res}{\mathsf{res}}
   \DeclareMathOperator*{\ind}{\mathsf{ind}}
\DeclareMathOperator*{\For}{\mathsf{For}}
\DeclareMathOperator*{\GL}{\mathsf{GL}}
\DeclareMathOperator*{\Aut}{\mathsf{Aut}}
\newcommand{\env}{{\operatorname{\mathsf{env}}}}
\newcommand{\rad}{\mathsf{rad}}
\newcommand{\mtc}{\textup{M}$\Delta$\textup{C}}
\newcommand{\extto}{\xrightarrow}
\DeclareMathOperator{\cone}{\mathsf{cone}}
\DeclareMathOperator{\Spc}{\mathsf{Spc}}
\DeclareMathOperator{\Spech}{\mathsf{Spec}^{\mathsf{h}}}
\DeclareMathOperator{\ulrp}{\underline{\mathsf{lrp}}}
\DeclareMathOperator{\ulrP}{\underline{\mathsf{lrP}}}
\DeclareMathOperator{\Ann}{\mathsf{Ann}}
\DeclareMathOperator{\obj}{\mathsf{obj}}
\DeclareMathOperator{\Subsets}{\mathsf{Subsets}}
\DeclareMathOperator{\Loc}{\mathsf{Loc}}
\DeclareMathOperator{\Thom}{\mathsf{Thom}}
\DeclareMathOperator{\supp}{\mathsf{supp}}
\DeclareMathOperator{\ev}{\mathsf{ev}}
\DeclareMathOperator{\coev}{\mathsf{coev}}
\DeclareMathOperator{\Ideals}{\mathsf{Ideals}}
\DeclareMathOperator{\Submods}{\mathsf{Submods}}
\DeclareMathOperator{\tr}{\mathsf{tr}}
\newcommand{\comp}{\mathop{\raisebox{+.3ex}{\hbox{$\scriptstyle\circ$}}}}
\begin{document}
%\linenumbers

\title[Modules, bimodules, and unipotent Hopf algebras] 
{Noncommutative tensor triangular geometry:
Modules, bimodules, and unipotent Hopf algebras}

\author[Solberg]{\O yvind Solberg}
\address{\O yvind Solberg\\
Department of Mathematical Sciences\\
NTNU\\
N-7491 Trondheim, Norway }
\email{oyvind.solberg@ntnu.no}

\author[Vashaw]{Kent B. Vashaw}
\address{Kent B. Vashaw\\
  Department of Mathematics\\
  UCLA\\
  Los Angeles, USA}
\email{kentvashaw@math.ucla.edu}

\author[Witherspoon]{Sarah Witherspoon}
\address{Sarah Witherspoon\\
Mathematics Department\\
Texas A{\&}M, College Station, USA}
\email{sjw@tamu.edu}

\date{\today}
 
\keywords{%
Support varieties, Hochschild cohomology, monoidal triangular geometry, Balmer spectrum, unipotent tensor categories}

\subjclass[2020]{%
16D20, %%Bimodules in associative algebras
16D50, %%Injective modules, self-injective associative rings
16E30, %%Homological functors on modules (Tor, Ext, etc.) in associative algebras
16E40, %%(Co)homology of rings and associative algebras (e.g., Hochschild, cyclic, dihedral, etc.)
16G10, %%Representations of associative Artinian rings
16T05, %%Hopf algebras and their applications
18G15, %%Ext and Tor, generalizations, Künneth formula (category-theoretic aspects) 
18G65, %%Stable module categories
18G80, %%Derived categories, triangulated categories
18M05, %%Monoidal categories, symmetric monoidal categories
20C20 %%Modular representations and characters
}

\begin{abstract}
  We initiate a program aimed at classifying thick ideals, Balmer spectra, and submodule categories of various stable categories of bimodules and modules for finite dimensional selfinjective algebras, and at clarifying the relationship between the universal Balmer support and the Hochschild cohomology support. In this paper, we focus mostly on the case of a unipotent Hopf algebra $A$. The stable category $\underline{\mathsf{lrp}}(A^{\mathsf{env}})$ of $A$-bimodules that are projective as left and as right $A$-modules is a monoidal triangulated category under $\otimes_A$, and acts naturally on the stable category $\underline{\mathsf{mod}}(A)$ of $A$. We show in this case that the Balmer spectrum $\mathsf{Spc}(\mathcal{E})$ of the thick subcategory ${\mathcal E}$ of $\underline{\mathsf{lrp}}(A^{\mathsf{env}})$ generated by $A$ is homeomorphic to $\mathsf{Spc}(\underline{\mathsf{mod}}(A))$ and defines an embedding $\mathsf{Spc}(\underline{\mathsf{mod}}(A)) \to \mathsf{Spc}(\underline{\mathsf{mod}}(A^{\mathsf{env}}))$. Subject to a conjectural description of spectra of finite tensor categories, we show that the spectrum of ${\mathcal E}$ is homeomorphic to ${\mathsf{Proj}}$ of the Hochschild cohomology ring of $A$, and that the Hochschild support coincides with the universal Balmer support. We show that any subcategory ${\mathcal K}$ of $\underline{\mathsf{lrp}}(A^{\mathsf{env}})$ containing a thick generator admits a surjective continuous map from $\mathsf{Spc}(\underline{\mathsf{mod}}(A))$. As a consequence, under the aforementioned conjecture, this spectrum is Noetherian, classifies the thick ideals of ${\mathcal K}$, and classifies thick ${\mathcal K}$-submodule categories of $\umod(A)$ via the Stevenson module-theoretic support. As examples, we present in detail the representations of finite $p$-groups.
\end{abstract}

\maketitle

\setcounter{tocdepth}{1} %\tableofcontents

\section{Introduction}

The theory of support varieties in representation theory encompasses an array of techniques organized around a common principle: one wishes to assign, for each representation of a given group or algebra, a closed subset of some geometric or topological space in a way that reflects the homological and tensor-categorical properties of that representation. Support varieties come in many different forms, including the cohomological support varieties studied by Quillen \cite{Quillen1971}, Carlson's rank varieties for elementary abelian groups \cite{Carlson1983} and analogous algebras, 
the hypersurface support for complete intersections introduced by Avramov and Buchweitz \cite{AvramovBuchweitz2000}, the Hochschild support varieties for finite dimensional algebras introduced by Snashall and the first author \cite{SnashallSolberg2004}, and the $\pi$-point support of Friedlander and Pevtsova \cite{FriedlanderPevtsova2007} for finite group schemes.  Tensor triangular geometry was introduced by Balmer~\cite{Bal}
for symmetric tensor triangulated categories, unifying many of these
known settings. Balmer's universal support varieties for a symmetric tensor triangulated category $\K$ take values in a formally-defined topological space that is now called the Balmer spectrum of $\K$, denoted $\Spc( \K)$. For some of the best known classes of examples, such as representations of finite groups
and finite group schemes, these Balmer spectra and support spaces are homeomorphic
to cohomologically-defined spectra and support varieties.
This correspondence allows classification of tensor ideals in terms of well understood homological information. The classification of thick tensor ideals is a longstanding problem in many different contexts, which include stable homotopy theory, algebraic geometry, and representation theory \cite{Hopkins1987,NeemanChrom1992,Thomason1997,BCR}. Stevenson extended Balmer's support theory to triangulated categories admitting an action by a tensor triangulated category in \cite{Stevenson}, giving a version of support which in some cases classifies thick submodule categories.

Balmer's theory and subsequent developments by many authors are specifically 
designed for {\emph{symmetric}} monoidal triangulated categories,
where in particular the tensor product is commutative up to natural isomorphism.
It generalizes in a straightforward way to {\emph{braided}} monoidal triangulated categories.
When the tensor product is however {\emph{noncommutative}}, 
analogies to noncommutative rings highlight some potential obstacles. 
Modifications are needed to obtain an informative theory of spectra and support.
This task was undertaken in work of 
Buan, Krause, and the first author in~\cite{BKS2007},
and in work of Nakano, Yakimov, and the second author in a series of 
papers~\cite{NVY1,NVY2,NVY3,NVY4,NVY5}.
Again in some of the best known cases, for example modules for some quantum
groups and smash coproduct Hopf algebras, the noncommutative Balmer spectrum
and support spaces are homeomorphic to cohomologically-defined support varieties.

In this paper we aim at a less well understood setting 
in relation to this theory:
categories of finitely generated bimodules over a selfinjective algebra $\Lambda$,
with tensor product over $\Lambda$ itself,
and actions of categories of $\Lambda$-bimodules on categories of
finitely generated left $\Lambda$-modules via tensor product over $\Lambda$ again.
This tensor product on bimodules is highly noncommutative (see e.g.\cite{ACM2012,Schauenburg})
and it is quite different from that on many of the categories studied previously. 
For example, the noncommutativity for the quantum groups and smash coproduct
Hopf algebras in~\cite{NVY3} is manageable by way of
informative comparisons with braided categories or group actions that govern the
noncommutativity in a prescribed way.
In contrast, the less manageable noncommutativity in categories of
bimodules presents some complications.
Nonetheless we begin here the task of studying their stable categories 
and their actions via noncommutative Balmer spectra and supports, obtaining
some first results in general settings.
We restrict to the stable category of finitely generated $\Lambda$-bimodules 
that are projective as left and as right  $\Lambda$-modules 
(called {\emph{left-right projective $\Lambda$-bimodules}}), for which
the tensor product over $\Lambda$ is an exact bifunctor. 
This is a triangulated subcategory of the stable category $\umod(\Lambda^{\env})$ of the enveloping algebra $\Lambda^{\env}$ of $\Lambda$.
This stable left-right projective category, denoted $\ulrp(\Lambda^{\env})$, was originally defined by Auslander and Reiten in \cite{AuslanderReiten1991}.
It has been fundamental in the study of stable equivalences, a major focus in modular representation theory of groups and representation theory of finite dimensional algebras (see for instance~\cite{Rickard1991,Broue1992,Rouquier1995,Linckelmann1996,Pogorzaly2001,DugasMartinezVilla2007,Pogorzaly2011}). 
However, until now, its monoidal triangular geometry has not been studied. 

The goal of this paper is to initiate a program for systematically studying the monoidal triangular geometry, Balmer spectra, thick ideals, thick submodule categories, and support varieties for the bimodule categories $\ulrp(\Lambda^{\env})$. By results of \cite{NVY3} in the finite tensor category setting, we also expect the geometry of the monoidal triangulated subcategory $\E  \subseteq \ulrp(\Lambda^{\env})$ generated by $\Lambda$ to play an important role in this theory. Since the whole category $\ulrp(\Lambda^{\env})$ does not necessarily have a thick generator, we also expect that monoidal triangulated subcategories $\K \subseteq \ulrp(\Lambda^{\env})$ which admit some thick generator will be important test cases. 

We propose the following guiding questions, given a finite dimensional selfinjective algebra $\Lambda$, $\E = \Thick(\Lambda) \subseteq \ulrp(\Lambda^{\env})$, and $\K=\Thick(X)$ for some bimodule $X\in \ulrp(\Lambda^{\env})$ such that $\K$ is monoidal triangulated:
\begin{enumerate}
\item What are the Balmer spectra of $\ulrp(\Lambda^{\env})$, $\E$, and $\K$?
\item Does the Balmer spectrum of $\ulrp(\Lambda^{\env})$ classify its thick ideals?
\item Do the Balmer spectra of $\ulrp(\Lambda^{\env})$, $\E$, and $\K$ classify submodule categories of $\umod(\Lambda)$ by way of actions of these categories on $\umod(\Lambda)$ and the Stevenson support?
\item What is the precise relationship between the Hochschild cohomology supports on $\ulrp(\Lambda^{\env}),$ $\E$, $\K$, and $\umod(\Lambda)$, and the Balmer and Stevenson supports?
\end{enumerate}

We set about developing a theory to answer these questions in the following manner.

We begin in Section~\ref{sec:framework} by recalling definitions of
monoidal triangulated categories, their actions on other triangulated categories,
cohomological and Balmer spectra and support, and known connections among them.
We recall the Rickard idempotent functors utilized by Cai and the
second author in the noncommutative setting~\cite{CV}, generalizing
work of Stevenson for symmetric monoidal categories~\cite{Stevenson}. 
To define these idempotent functors, 
we must temporarily leave categories of finitely generated modules
and work instead in larger categories of compactly generated objects
(e.g.~all modules rather than just the finitely generated ones).
We use these idempotent functors later, in Section~\ref{sec:unipotent},
to define support for actions.

In Section~\ref{sec:lrp} we study the left-right projective category
of bimodules over a finite dimensional selfinjective algebra $\Lambda$ and its stable category.
We show that this is a rigid monoidal category, and so is the  
subcategory generated by the unit object $\Lambda$ itself.
We define some functors between module
and bimodule categories for later use specifically in the case
of a finite dimensional Hopf algebra.

Our main results are in Sections~\ref{sec:unipotent} and~\ref{sec:Hopf}.
We keep to a more general setting in Section~\ref{sec:unipotent}, that of an
action of a rigid monoidal triangulated category on another triangulated category, 
developing some theory for handling tensor ideals and subcategories
under additional conditions.
When both categories are monoidal, we give conditions under which
there is a bijection between their tensor ideals.
We define supports of objects under actions with additional
conditions. One interesting aspect of the setting of Section~\ref{sec:unipotent} is 
that for the applications we have in mind, we need to consider non-monoidal subcategories and non-monoidal functors: we impose unipotent assumptions at various points to get around
non-monoidal obstructions, so that these subcategories and functors can still be 
leveraged to understand the underlying geometry.

In Section~\ref{sec:Hopf}, we restrict to unipotent Hopf algebras for
the strongest results, and use the general theory of the preceeding section to prove:
\begin{mainthm}[See Theorem \ref{thm:spce}, Theorem \ref{thm:nvy-conj-gives-hochschild}, Theorem \ref{thm:spclrp}]
\label{mainthm}
Let $A$ be a finite dimensional unipotent Hopf algebra. 
\begin{enumerate}[\qquad \rm(a)]
\item There are homeomorphisms $\Spc (\E) \cong \Spc ( \umod(A)) \cong \supp(A)$, where $\supp(A)$ is the Balmer support of the bimodule $A$ considered as an object of $\umod(A^{\env})$. If $\Spc(\umod(A))$ is Noetherian, then thick $\E$-submodule categories of $\umod(A)$ are classified by $\Spc(\E)$ using the Stevenson support.
\item Assume \cite[Conjecture E]{NVY3}. Then $\Spc(\E)$ is Noetherian, and there is a homeomorphism $\Spc(\E) \cong \Proj (\HH(A))$ under which the Balmer and Stevenson supports are identified with the classical Hochschild cohomology support. In particular, the Hochschild cohomology support for objects of $\E$ satisfies a tensor product property.
\item Suppose $\Spc(\umod(A))$ is Noetherian. Then there is a surjective map 
\[\Spc (\umod(A)) \to \Spc (\ulrp(A^{\env}))
\] that factors through an orbit space $\Spc (\umod(A)) / \Aut(A)$. Furthermore, there is a bijection between thick ideals of $\ulrp(A^{\env})$ and thick $\ulrp(A^{\env})$-submodule categories of $\umod(A)$. 
\item Suppose $\Spc(\umod(A))$ is Noetherian, and that $\K$ is a rigid monoidal triangulated subcategory of $\ulrp(A^{\env})$ containing a thick generator. Then there is a surjective continuous map
\[
\Spc(\umod(A)) \to \Spc(\K),
\]
and as a consequence $\Spc(\K)$ is Noetherian and thick $\K$-submodule categories of $\umod(A)$ are classified by $\Spc(\K)$ using the Stevenson support.
\end{enumerate}
\end{mainthm}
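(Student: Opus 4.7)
The plan centers on a monoidal functor $F\colon \umod(A) \to \ulrp(A^{\env})$ sending an $A$-module $M$ to $M \otimes_k A$ with the diagonal left action (via the coproduct of $A$) and the right regular action. This functor, which should be constructed from the Hopf-algebra structure in Section~\ref{sec:lrp}, is exact monoidal on the stable level and carries the trivial module $k$ to the monoidal unit $A$ of $\ulrp(A^{\env})$. The first step is to show that $F$ induces an equivalence $\umod(A) \simeq \E$ of rigid monoidal triangulated categories: the unipotence hypothesis gives $\umod(A) = \Thick(k)$, forcing $F$ to land in $\E = \Thick(A)$, and the Hopf-algebraic rigidity provides inverses on morphism spaces. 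Balmer's functoriality then yields the homeomorphism $\Spc(\E) \cong \Spc(\umod(A))$ in (a), and the identification with $\supp(A)$ follows from the general principle that the Balmer spectrum of a rigid monoidal triangulated subcategory $\Thick(X)$ of a larger (acting) triangulated category is homeomorphic to $\supp(X)$. The classification of thick $\E$-submodule categories via Stevenson support under the Noetherian hypothesis then follows by invoking Stevenson's general theorem together with the rigidity of $\E$.

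For (b), I would apply NVY Conjecture~E to the finite tensor category $\umod(A)$ to obtain $\Spc(\umod(A)) \cong \Proj H^*(A,k)$, where $H^*(A,k) = \Ext^*_A(k,k)$. For unipotent $A$, a comparison between $H^*(A,k)$ and $\HH(A) = \Ext^*_{A^{\env}}(A,A)$ is made through the graded ring map induced by $F$, using $F(k) \cong A$; unipotence is what forces this comparison to be a homeomorphism on $\Proj$. Combined with part (a), this yields $\Spc(\E) \cong \Proj(\HH(A))$ and the identification of Balmer, Stevenson, and classical Hochschild supports on $\E$. The tensor product property for Hochschild support is then a consequence of the tensor product property for Balmer support in any rigid monoidal triangulated category, transported across these homeomorphisms.

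For (c) and (d), the key ingredient is the action of $\ulrp(A^{\env})$ (respectively $\K$) on $\umod(A)$ via $\otimes_A$. The map $\Spc(\umod(A)) \to \Spc(\ulrp(A^{\env}))$ is built from the identification $\Spc(\umod(A)) \cong \Spc(\E)$ of (a) together with the inclusion $\E \hookrightarrow \ulrp(A^{\env})$, using the general action-theoretic machinery developed in Section~\ref{sec:unipotent}. The group $\Aut(A)$ acts on $\Spc(\umod(A))$ by pulling back along the twists $M \mapsto {}_\phi M$, and two primes have the same image precisely when they differ by such a twist, because twisted bimodules of the form ${}_\phi A_\psi$ are the generators of $\ulrp(A^{\env})$ outside $\E$; surjectivity reduces to detecting every prime of $\ulrp(A^{\env})$ through its pullback to the twist-enriched subcategory. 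The bijection between thick ideals of $\ulrp(A^{\env})$ and thick $\ulrp(A^{\env})$-submodule categories of $\umod(A)$ is then an application of the Stevenson-type bijection framework from Section~\ref{sec:unipotent}. Part (d) is proved in the same way with $\K$ replacing $\ulrp(A^{\env})$, using the thick generator of $\K$ to construct and control the map $\Spc(\umod(A)) \to \Spc(\K)$.

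The main obstacle I expect is the surjectivity in parts (c) and (d) and the precise analysis of $\Aut(A)$-orbits: an arbitrary left-right projective bimodule need not be a twist of a module coming from $F$, so showing that every prime of $\ulrp(A^{\env})$ lies in the image requires genuine input from the rigid generator and from the action framework of Section~\ref{sec:unipotent}. A secondary point, on which everything else rests, is proving that $F$ is an equivalence onto $\E$ rather than merely a monoidal embedding; this is precisely where unipotence is essential, as it prevents objects from being "missed" by $\Thick(k)$.
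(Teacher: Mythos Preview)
Your central claim—that $F$ induces an equivalence $\umod(A)\simeq\E$ of monoidal triangulated categories—is false, and this is where the proposal breaks down. Take $A=k\mathbb{Z}/2$ with $\operatorname{char}k=2$. Then $\uEnd_A(k)$ is one-dimensional, but $\uEnd_{A^{\env}}(A)$, the degree-zero Tate--Hochschild cohomology, is two-dimensional (one checks directly that $\Hom_{A^{\env}}(A,A)\cong A$ and that no nonzero endomorphism factors through a projective bimodule in characteristic $2$). So $F$ is not full. The functor $F$ is monoidal with $GF\cong 1$, hence faithful, but fullness genuinely fails; the ``Hopf-algebraic rigidity provides inverses on morphism spaces'' step cannot be made to work. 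The paper never claims $F$ is an equivalence. Instead, for $\Spc(\E)\cong\Spc(\umod(A))$ it uses the \emph{action} of $\E$ on $\umod(A)$: this action is sincere (Proposition~\ref{prop:findimfunctor}) and right tensor-invertible via $F$ (Proposition~\ref{prop:Hopf-GF}), and then the abstract machinery of Section~\ref{sec:unipotent} (Proposition~\ref{prop:ideal-bij-spc-general}, Corollary~\ref{cor:spck-vs-spca-unip}) produces a bijection on \emph{thick ideals} of the two categories, which is enough for a homeomorphism of spectra without any equivalence of categories. The identification $\Spc(\E)\cong\supp(A)$ is handled separately via the ``non-monoidal unipotent subcategory'' framework (Proposition~\ref{prop:support-subcat-spc}), comparing the two tensor products $\otimes_A$ and $\otimes_k$ on $\E\subseteq\umod(A^{\env})$.

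Your approach to (c) and (d) also has gaps. The statement that ``twisted bimodules ${}_\phi A_\psi$ are the generators of $\ulrp(A^{\env})$ outside $\E$'' is not true in general, and the paper does \emph{not} claim that the fibers of $\Spc(\umod(A))\to\Spc(\ulrp(A^{\env}))$ are precisely $\Aut(A)$-orbits—only that the map factors through the orbit space. Surjectivity is obtained by an entirely different mechanism: one builds a support datum on $\K$ valued in $\Spc(\umod(A))$ out of the action (Proposition~\ref{prop:support-funct-g}), obtains the map $\xi$ by universality, and then proves surjectivity via a Noetherian descending-chain argument on supports (Proposition~\ref{prop:map-on-spectra-t-k}). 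The thick generator of $\K$ in (d) is what makes $\xi$ continuous, hence $\Spc(\K)$ Noetherian; it plays no role in identifying fibers.
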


In particular, given any finite subgroup $H$ of $\Aut(A)$, we can identify a particular monoidal triangulated subcategory of $\ulrp(A^{\env})$ which has a spectrum that is precisely the topological quotient $\Spc(\umod(A))/H$. This quotient identifies with $\Proj$ of the categorical center of the Hochschild cohomology ring for this category.

We summarize the various functors and maps on spectra that we produce in the course of proving Theorem \ref{mainthm}, in the case that $A$ is a finite-dimensional unipotent Hopf algebra such that $\Spc(\umod(A))$ is Noetherian, and $\K$ is an arbitrary monoidal triangulated subcategory of $\ulrp(A^{\env})$. We have the following functors, where monoidal functors are labeled by $\otimes$ and non-monoidal functors are labeled by $\neg \otimes$:

\vspace{\baselineskip}
\begin{center}
\begin{tikzcd}
\umod(A^{\env})                                                           &  &                                     \\
\ulrp(A^{\env}) \arrow[u, "\neg \otimes"] \arrow[rrd, "{G,\neg \otimes}"] &  &                                     \\
\K \arrow[u, "\otimes"]                                                   &  & \umod(A) \arrow[lld, "{F,\otimes}"] \\
\E \arrow[u, "\otimes"]                                                   &  &                                    
\end{tikzcd}
\end{center}
\vspace{\baselineskip}

We discuss the definition and properties of the functors $F$ and $G$ in Sections \ref{subsect:functorG} and \ref{subsect:functorF}. These functors induce maps on the level of spectra (these are the maps appearing in Theorem \ref{mainthm}):
\vspace{\baselineskip}
\begin{center}
\begin{tikzcd}
                          & \Spc(\umod(A^{\env}))         &  &                                                                                                       \\
\supp(A) \arrow[ru, hook] & \Spc(\ulrp(A^{\env}))         &  & \Spc(\umod(A))/\Aut(A) \arrow[ll, two heads]                                                          \\
                          & \Spc(\K)                      &  & \Spc(\umod(A)) \arrow[lld, "\cong"] \arrow[ll, two heads] \arrow[llu, two heads] \arrow[u, two heads] \\
                          & \Spc(\E) \arrow[luu, "\cong"] &  &                                                                                                      
\end{tikzcd}
\end{center}
\vspace{\baselineskip}
The map $\Spc(\umod(A)) \to \Spc(\K)$ is continuous if $\K$ has a thick generator, but might not be in general. The map $\Spc(\umod(A)) \to \Spc(\ulrp(A^{\env}))$ is not known to be continuous. 

Although we have a surjective map $\Spc(\umod(A)) \to \Spc(\ulrp(A^{\env}))$ in the case that $\Spc(\umod(A))$ is Noetherian, a full understanding of the points and the topology of $\Spc(\ulrp(A^{\env}))$ remains elusive. As a first step in this direction, we take a closer look at the class of examples given by finite $p$-groups
in characteristic~$p$,
proving that the Balmer spectrum of the left-right projective category has a unique closed point. 

\begin{mainthm}[See Theorem \ref{thm:spc-arbitrary-pgroup}]
\label{thm:second-mainthm}
Let $G$ be a finite $p$-group where $p$ is a prime, $k$ is an algebraically closed field of characteristic $p$, and $A=kG$ is the group algebra of $G$. Then $\Spc (\ulrp(A^{\env}))$ has a unique closed point. If $G$ is cyclic or generalized quaternion (for $p=2$), then $\Spc(\ulrp(A^{\env}))$ is the one-point topological space.
\end{mainthm}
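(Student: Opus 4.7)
The plan is to combine Theorem~A(c), which supplies a surjective continuous map $f\colon \Spc(\umod(A)) \twoheadrightarrow \Spc(\ulrp(A^{\env}))$, with special features of modular group algebras of $p$-groups. I would first verify the hypotheses of Theorem~A: since $G$ is a finite $p$-group in characteristic $p$, the trivial module is the unique simple $A$-module so $A = kG$ is a unipotent Hopf algebra, and by Evens--Venkov the Hochschild/group cohomology is finitely generated, so under the standard identification $\Spc(\umod(A)) \cong \Proj(H^{\bullet}(G,k))$ the source is Noetherian.

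For the second assertion I would use classical periodicity: when $G$ is cyclic (any $p$) or generalized quaternion (at $p=2$), the reduced cohomology $H^{\bullet}(G,k)/\sqrt{0}$ is a polynomial ring in a single generator, so $\Proj(H^{\bullet}(G,k))$ is a single point and the surjection from Theorem~A(c) forces $\Spc(\ulrp(A^{\env}))$ to have at most one point. Nonemptiness follows from the fact that the unit $A$ is not projective over $A^{\env}$ (as $A$ is not separable over $k$), so $A$ represents a nonzero object in $\umod(A^{\env})$ and hence in $\ulrp(A^{\env})$.

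For the unique closed point statement in the general case, the closed points of $\Spc(\ulrp(A^{\env}))$ are, in Balmer's convention, the minimal prime thick $\otimes$-ideals. Since $f$ is a continuous surjection, it suffices to identify a candidate minimal prime $\mathfrak{m}$ of $\ulrp(A^{\env})$ and to show that every closed point of $\Spc(\umod(A))$ maps to $\mathfrak{m}$ under $f$. I would take $\mathfrak{m}$ to be the prime detected, via the action of $\ulrp(A^{\env})$ on $\umod(A)$ and the Stevenson support, by the trivial module $k$, which for a $p$-group is the unique simple and a universal reference object (every module admits a filtration by $k$). For $A = kG$ with $G = (\mathbb{Z}/p)^n$ the closed points of $\Spc(\umod(A))$ are parametrized by the $\pi$-point classes (classically the lines in $\mathbb{P}^{n-1}(k)$), while $\Aut(G) = GL_n(\mathbb{F}_p)$ is finite, so orbits alone cannot collapse them. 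The additional identifications must come from twisted bimodule tensor products in $\ulrp(A^{\env})$; concretely, for any pair of closed points $\P_1, \P_2 \in \Spc(\umod(A))$, I would exhibit a bimodule $X \in \ulrp(A^{\env})$ whose action on $\umod(A)$ jointly witnesses $\P_1$ and $\P_2$, so that $f(\P_1)$ and $f(\P_2)$ must agree.

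The main obstacle is constructing such witnessing bimodules and verifying that the candidate $\mathfrak{m}$ is truly minimal in the highly noncommutative category $\ulrp(A^{\env})$. The argument should exploit that every $kG$-module filters by the trivial module $k$, reducing the comparison of arbitrary $\pi$-points on the module side to computations against $k$, and then use the construction of $f$ through Stevenson support for the $\ulrp(A^{\env})$-action on $\umod(A)$ to transport this universality to the bimodule spectrum. The cyclic and generalized quaternion cases then appear as the instances in which $\Spc(\umod(A))$ itself is already a point, so no further collapse is needed.
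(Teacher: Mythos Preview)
Your treatment of the cyclic and generalized quaternion cases is fine and matches the paper: $\Spc(\umod(A))$ is a point, and surjectivity of $\xi$ finishes it. The general case, however, has a genuine gap stemming from a confusion between group automorphisms and \emph{algebra} automorphisms.

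For $A = k(\mathbb{Z}/p)^n \cong k[x_1,\ldots,x_n]/(x_1^p,\ldots,x_n^p)$ you write that $\Aut(G) = \GL_n(\mathbb{F}_p)$ is finite and therefore cannot collapse the closed points of $\mathbb{P}^{n-1}_k$. But Theorem~A(c) factors $\xi$ through $\Spc(\umod(A))/\Aut(A)$, where $\Aut(A)$ is the group of $k$-algebra automorphisms of $A$. This group contains $\GL_n(k)$ acting linearly on the span of $x_1,\ldots,x_n$, and since $k$ is algebraically closed this action is transitive on closed points of $\mathbb{P}^{n-1}_k$. So the orbit argument \emph{does} collapse all closed points, and the bimodules you were looking for are precisely the invertible ${}_{\phi}A_1$ for $\phi\in\GL_n(k)$: no further construction is needed. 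Concretely, one shows that for any nonzero $M\in\umod(A)$ the $\ulrp(A^{\env})$-submodule it generates has every closed point in its support; via the formula for $\xi$ this forces $\xi(\P)=0$ for every closed $\P$, so $0$ is prime and is the unique closed point.

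What is still missing after correcting this is the passage from elementary abelian to arbitrary $p$-groups. Your filtration-by-$k$ idea does not help here: it only says $\Thick(k)=\umod(A)$, which is already the unipotence hypothesis. The paper instead uses Quillen stratification: given a closed point in the support of $M$, it lies in $f_E^G(V_E^{\max})$ for some elementary abelian $E\leq G$, so $\res_E^G(M)$ is nonprojective; the elementary abelian case then gives a $kE$-bimodule $B\in\ulrp(kE^{\env})$ hitting any other desired point, and one induces $B$ up to $\ind_{E\times E^{\opp}}^{G\times G^{\opp}}(B)\in\ulrp(kG^{\env})$ to transport this back to $G$. Connectedness of $V_G^{\max}$ (Carlson) and the covering by the $f_E^G(V_E^{\max})$ then propagate the support to all closed points. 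This restriction--induction step, together with the correct identification of $\Aut(A)$, is the substance of the argument; your proposal does not supply it.
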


This outcome is in fact anticipated in potential connections with
cohomologically-defined support, where the appropriate cohomology ring
is that of the center of the category~\cite{NVY3}, and categories of
bimodules generally have trivial center~\cite{Schauenburg,ACM2012}. We check that for an elementary abelian $p$-group, the categorical center of the cohomology ring, which is a natural candidate to use to understand $\Spc(\ulrp(A^{\env}))$, is trivial. 

All our algebras are $k$-algebras for a field $k$.
In the latter part of Section~\ref{sec:Hopf}, 
we will assume $k$ is algebraically closed  as indicated there.

\subsection*{Acknowledgements} The authors thank Paul Balmer and Greg Stevenson for helpful comments. 
The first author thanks the Department of Mathematical Science and the Faculty of Information Technology and Electrical Engineering at NTNU for support during his sabbatical at Texas A\&M University, and thanks the Department of Mathematics at Texas A\&M University for the great hospitality during his stay there in the autumn of 2024. 
The second author was partially supported by NSF postdoctoral fellowship DMS-2103272 and by an AMS-Simons Travel Grant. The second author is grateful for helpful conversations with Joshua Enwright and Peter Goetz. 
The third author was partially supported by NSF grant DMS-2001163. 

\section{The general framework}
\label{sec:framework}

This section is devoted to describing the general framework in which
we set our problem. It involves monoidal triangulated categories and
actions thereof on triangulated categories, Balmer spectra and
support, cohomological support, and thick ideal / thick submodule
correspondences. We start by recalling the general notion of a monoidal
triangulated category and reviewing its Balmer spectrum. 
We then recall the constructions
of various support theories for actions of monoidal triangulated
categories on triangulated categories: the cohomological support, the central cohomological
support, and the Stevenson support. Lastly, we indicate how these supports
are related to one another in the case of finite group algebras, and recall
the extent to which these correspondences are known hold for general monoidal triangulated categories.

\subsection{Monoidal triangulated categories}
Recall that a
monoidal triangulated category $\K$ is a triangulated category with
suspension functor $\Sigma$ from $\K$ to $\K$ and additional structure 
$\K = (\K, \Sigma, \otimes, \unit, \mathfrak{a}, \mathfrak{l},\mathfrak{r}, \lambda, \varrho)$
which we for short write \mtc. Here 
\begin{enumerate}[\rm(i)]
\item 
  $ -\otimes-\colon \K\times \K \to \K $
  is an additive bifunctor that is exact in each variable,
\item $\mathfrak{e}$ is an object in $\K$ called the \emph{tensor
    identity of $\K$}, 
\item $\mathfrak{a}\colon (-\otimes-)\otimes - \to -\otimes(-\otimes-)$ is
  a natural isomorphism, 
\item 
  $ \mathfrak{l}\colon \mathfrak{e}\otimes - \to 1_\K $
  and
  $ \mathfrak{r}\colon -\otimes\mathfrak{e}\to 1_\K $
  are natural isomorphisms, and 
\item 
  $\lambda\colon -\otimes \Sigma(-) \to \Sigma(-\otimes-)$
  \  and  \ 
  $\varrho\colon \Sigma(-)\otimes-\to \Sigma(-\otimes-)$
  are natural isomorphisms.
\end{enumerate}
A number of axioms must be satisfied (Anticommutativity
Axiom, Pentagon Axiom, Triangle Axiom);  we omit these here (see~\cite{BKSS,JK}). 

\subsection{Examples}
\label{sect:ex} The classic example of a \mtc\ is the stable
category $\umod (A)$ of finitely generated modules of a finite
dimensional Hopf algebra $A$ over a field $k$ with the tensor product
$\otimes = \otimes_k$. Another example is the stable left-right projective category 
$\ulrp (\Lambda^{\env})$ of a finite dimensional selfinjective algebra $\Lambda$; we recall the construction of this category now. Consider the stable category $\umod(\Lambda^{\env})$ of finitely generated $\Lambda$-bimodules. 
Recall that since $\Lambda$ is selfinjective, so is its enveloping algebra $\Lambda^{\env}$, by \cite[Proposition 11.5]{SkowronskiYamagata2011}, so that both $\umod(\Lambda)$ and $\umod(\Lambda^{\env})$ are triangulated categories \cite{Happel1988}. The forgetful functors $\smod(\Lambda^{\env}) \to \smod(\Lambda)$ and $\smod(\Lambda^{\env})\to \smod(\Lambda^{\opp})$ are exact and send projectives to projectives, and hence descend to triangulated functors
\[
{\For}_l: \umod(\Lambda^{\env}) \to \umod(\Lambda), \quad {\For}_r: \umod(\Lambda^{\env}) \to \umod(\Lambda^{\opp}).
\]
The kernel of any triangulated functor is a thick triangulated subcategory, and so $\ulrp(\Lambda^{\env}):=\ker(\For_l) \cap \ker(\For_r)$ is a thick triangulated subcategory of $\umod(\Lambda^{\env})$. This category $\ulrp(\Lambda^{\env})$ is the full subcategory of $\umod(\Lambda^{\env})$ corresponding to $\Lambda$-bimodules that are projective as left $\Lambda$-modules and as right $\Lambda$-modules. The tensor product $\otimes_{\Lambda}$ is well-defined on this stable level, and under this tensor product, $\ulrp(\Lambda^{\env})$ is a \mtc. 

\subsection{Ideals in a \mtc} In a \mtc\ we have the notions of thick subcategories and left,
right, two-sided and prime ideals, which we recall next.
\begin{defn}
Let $\K$ be a \mtc.
\begin{enumerate}[\rm(i)]
\item A \emph{thick subcategory} of $\K$ is a nonempty full triangulated subcategory closed under direct summands.
\item A \emph{thick} \textup{(1)} \emph{ideal}, \textup{(2)} \emph{right
    ideal}, or \textup{(3)} \emph{left ideal} $\I$
  in $\K$ is thick subcategory of $\K$ such that
  \begin{enumerate}[\rm(1)]
  \item for each $a\in \I$ and for all $b\in\K$, both objects $a\otimes b$ and
    $b\otimes a$ are in $\I$.
  \item for each $a\in \I$ and for all $b\in\K$, the object $a\otimes b$
    is in $\I$.
  \item for each $a\in \I$ and for all $b\in\K$, the object $b\otimes a$
    is in $\I$.
  \end{enumerate}
\item A proper thick ideal $\P$ of $\K$ is  \emph{prime} in $\K$ provided that
  for all thick ideals $\I$ and $\J$ in $\K$,
  if $\I\otimes \J\subseteq \P$, then $\I$ or $\J$ is contained in $\P$. 
\item A thick ideal $\P$ of $\K$ is \emph{completely prime} provided that
  for all objects $a,b\in \K$, 
  if $ a\otimes b\in \P$, then $a\in \P$ or $b\in \P$.
\item The category $\K$ is \emph{completely prime} provided that 
   all prime ideals in $\K$ are completely prime. 

\end{enumerate}
\end{defn}

\begin{notation}
When $\mS$ is a subcategory of $\K$, we denote the thick subcategory, left, right, and (two-sided) thick ideals generated by $\mS$, respectively, by $\Thick(\mS),$ $\langle \mS \rangle_l$, $\langle \mS\rangle_r$, and $\langle \mS \rangle$. That is, $\Thick(\mS), \langle \mS\rangle_l, \langle \mS\rangle_r,$ and $\langle \mS \rangle$ are respectively the  smallest thick subcategory, smallest left ideal, smallest right ideal, and smallest two-sided ideal containing $\mS$.
In some cases, where we need to clearly specify the category $\K$, we will write $\langle \mS \rangle_{\K}$ for $\langle \mS \rangle$. The collection of all (two-sided) thick ideals of $\K$ will be denoted $\Ideals(\K)$.
\end{notation}

We recall a few useful standard facts about thick ideals, for reference. 

\begin{lem}
\label{fin-gen}
Suppose $\I = \langle \mS \rangle$ for some collection of objects $\mS$ in a \mtc{} $\K$. If $x \in \I$, then there is a finite subset $\mS' \subseteq \mS$ such that $x \in \langle \mS' \rangle$. 
\end{lem}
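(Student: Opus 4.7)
The plan is the standard ``define the putative smaller ideal and check it is all of $\I$'' argument. Let $\I'$ denote the full subcategory of $\K$ consisting of those objects $y$ such that $y \in \langle \mS' \rangle$ for some finite subset $\mS' \subseteq \mS$. Tautologically $\mS \subseteq \I'$, since for any $s \in \mS$ one has $s \in \langle \{s\}\rangle$. If I can show that $\I'$ is itself a thick ideal of $\K$, then by minimality of $\langle \mS \rangle$ among thick ideals containing $\mS$ we will have $\I = \langle \mS \rangle \subseteq \I'$, which is exactly the conclusion of the lemma.

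So the content is verifying that $\I'$ satisfies the closure properties of a thick two-sided ideal. Closure under shifts $\Sigma^{\pm 1}$ is immediate, since each $\langle \mS' \rangle$ is itself thick. Closure under direct summands is also immediate: if $y \oplus y' \in \langle \mS'\rangle$ for some finite $\mS' \subseteq \mS$, then $y$ and $y'$ each lie in $\langle \mS' \rangle$ using thickness of $\langle \mS' \rangle$. For closure under the tensor action, if $y \in \langle \mS' \rangle$ with $\mS'$ finite and $z \in \K$ is arbitrary, then $y \otimes z$ and $z \otimes y$ belong to $\langle \mS' \rangle$, since $\langle \mS'\rangle$ is already a two-sided ideal; hence $y \otimes z, z \otimes y \in \I'$, using the same finite $\mS'$.

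The slightly less trivial step is closure under completion of triangles. Suppose $y \to y' \to y'' \to \Sigma y$ is a distinguished triangle in $\K$ with $y, y' \in \I'$, so there exist finite subsets $\mS_1, \mS_2 \subseteq \mS$ with $y \in \langle \mS_1 \rangle$ and $y' \in \langle \mS_2 \rangle$. Setting $\mS'' := \mS_1 \cup \mS_2$, both $y$ and $y'$ belong to the thick ideal $\langle \mS'' \rangle$, and since this is a triangulated subcategory it contains $y''$ as well; hence $y'' \in \I'$. The symmetric cases where the two chosen objects are different vertices of the triangle are handled identically. This exhausts the axioms for a thick two-sided ideal.

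I do not expect any genuine obstacle here; the only thing to be a little careful about is recording that the particular thick ideal $\langle \mS' \rangle$ for a fixed finite $\mS'$ is already two-sided, so that no enlargement of $\mS'$ is forced by the tensor closure condition. Once $\I'$ is confirmed to be a thick two-sided ideal containing $\mS$, the lemma follows by the minimality of $\langle \mS \rangle$.
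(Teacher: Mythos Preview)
Your proposal is correct and follows essentially the same approach as the paper: define $\I'$ to be the objects lying in $\langle \mS'\rangle$ for some finite $\mS'\subseteq \mS$, observe $\mS\subseteq\I'$, and verify that $\I'$ is a thick two-sided ideal (the paper only spells out the triangle step, taking the union of two finite witnessing sets, exactly as you do). The only cosmetic difference is that the paper defines $\I'$ as a subset of $\I$ rather than of $\K$, but since every $\langle \mS'\rangle\subseteq\langle\mS\rangle=\I$ this is the same collection.
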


\begin{proof}
Consider the collection of objects 
\[
\I' := \{ x \in \I \mid \exists \text{ a finite subset }\mS' \subseteq \mS \text{ such that }x \in \langle \mS' \rangle\}.
\]
The claim of the lemma is that $\I' = \I$. Clearly, $\mS \subseteq \I'$, so to show that $\I' = \I$ it suffices to show that $\I'$ is a thick ideal. This is straightforward to verify; for instance, if 
\[
x \to y \to z \to \Sigma x
\]
is a distinguished triangle with $x$ and $y$ in $\I'$, then there exist finite subsets $\mS'$ and $\mS''$ of $\mS$ such that $x \in \langle \mS'\rangle$ and $y \in \langle \mS''\rangle$; setting $\mS''':=\mS' \cup \mS''$, we have $z \in \langle \mS'''\rangle$. 
\end{proof}

\begin{lem}
\label{lem:thick-ideal-tens}
Let $\K$ be a {\mtc} and let $\I$ be the two-sided thick ideal $\langle x \rangle$ of $\K$ generated by an object $x \in \K$. If $y \in \I$, then there exist objects $x_1, x_2 \in \K$ such that $y \in \Thick(x_1 \otimes x \otimes x_2).$
\end{lem}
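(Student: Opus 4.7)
The plan is to adapt the strategy used in the proof of Lemma \ref{fin-gen}. I would define the subclass
\[
\I' := \{ y \in \I \mid \text{there exist } x_1, x_2 \in \K \text{ such that } y \in \Thick(x_1 \otimes x \otimes x_2)\}
\]
and argue that $\I' = \I$ by showing that $\I'$ is itself a thick two-sided ideal of $\K$ containing $x$. Once this is established, minimality of $\I = \langle x \rangle$ forces $\I \subseteq \I'$, which is exactly the content of the lemma. That $x \in \I'$ is immediate by taking $x_1 = x_2 = \unit$ and using the unitor isomorphisms $\mathfrak{l}$ and $\mathfrak{r}$ to obtain $\unit \otimes x \otimes \unit \cong x$.

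For the verification that $\I'$ is a thick two-sided ideal, closure under suspension and direct summands is inherited termwise from each $\Thick(x_1 \otimes x \otimes x_2)$, so no new witnesses $(x_1, x_2)$ are needed. The two-sided ideal property is the next routine check: given $y \in \Thick(x_1 \otimes x \otimes x_2)$ and $b \in \K$, the functor $-\otimes b$ is exact and additive, so it carries thick subcategories to thick subcategories, and reassociating via $\mathfrak{a}$ gives $y \otimes b \in \Thick(x_1 \otimes x \otimes (x_2 \otimes b))$; symmetrically $b \otimes y \in \Thick((b \otimes x_1) \otimes x \otimes x_2)$, and both are of the required form.

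The step that requires the key construction is closure under cofibers of morphisms. Given a distinguished triangle $y_1 \to y_2 \to y_3 \to \Sigma y_1$ with $y_1 \in \Thick(x_1 \otimes x \otimes x_2)$ and $y_2 \in \Thick(x_1' \otimes x \otimes x_2')$ for possibly different $(x_1, x_2)$ and $(x_1', x_2')$, I would absorb both witnesses into a single one by setting $\widetilde{x}_1 := x_1 \oplus x_1'$ and $\widetilde{x}_2 := x_2 \oplus x_2'$. Expanding $\widetilde{x}_1 \otimes x \otimes \widetilde{x}_2$ using biadditivity of $\otimes$ exhibits both $x_1 \otimes x \otimes x_2$ and $x_1' \otimes x \otimes x_2'$ as direct summands, so both $y_1$ and $y_2$ lie in $\Thick(\widetilde{x}_1 \otimes x \otimes \widetilde{x}_2)$, and hence so does $y_3$.

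The main obstacle, and the conceptual heart of the argument, is precisely this cofiber step: one needs a single tensor-triple $(x_1, x_2)$ that witnesses membership for both endpoints of the triangle simultaneously, and this is exactly what the direct sum trick together with distributivity of $\otimes$ over $\oplus$ supplies. Everything else is formal bookkeeping with the thick-subcategory closure conditions.
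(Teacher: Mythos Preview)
Your proof is correct and follows essentially the same approach as the paper: define the same subclass $\I'$, verify that $x \in \I'$ via the unit, show $\I'$ is a thick two-sided ideal, and conclude by minimality of $\langle x \rangle$. In particular, your direct-sum trick for the cofiber step and your argument for the ideal property match the paper's proof line by line.
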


\begin{proof}

Consider the following subset of $\K$:
\[
\I' := \{ y \in \I \mid \exists\;\; x_1, x_2 \in \K \text{ with } y \in \Thick(x_1 \otimes x \otimes x_2)\}.
\]
We claim that $\I' = \I$, from which the proposition will follow. Note that $\I'$ is a thick subcategory. In particular, if
\[
y_1 \to y_2 \to y_3 \to \Sigma y_1
\]
is a distinguished triangle with $y_1$ and $y_2$ in $\I'$, then there exist $x_1,x_2, x_3,$ and $x_4$ such that $y_1 \in \Thick(x_1 \otimes x \otimes x_2)$ and $y_2 \in \Thick(x_3 \otimes x \otimes x_4)$. Then $y_1$ and $y_2$ are both in $\Thick((x_1 \oplus x_3) \otimes x \otimes (x_2 \oplus x_4))$, so $y_3 \in \Thick((x_1 \oplus x_3) \otimes x \otimes (x_2 \oplus x_4))$,
 hence $y_3 \in \I'$.  

Clearly $x$ is in $\I'$, since $x \in \Thick( \unit  \otimes x \otimes \unit)$. Since $\I$ is the thick ideal generated by $x$, i.e.~the smallest thick ideal containing $x$, to show that $\I=\I'$ it will suffice to show that $\I'$ is a thick ideal.
 Let $y \in \I'$ and $x_1, x_2 \in \K$ with $y \in \Thick(x_1 \otimes x \otimes x_2)$. Let $z \in \K$; we must show that $z \otimes y$ and $y \otimes z$ are in $\I'$. 
Since $y \in \Thick(x_1 \otimes x \otimes x_2)$, we have $z \otimes y \in \Thick(z \otimes (x_1 \otimes x \otimes x_2)) =  \Thick((z \otimes x_1) \otimes x \otimes x_2)$. 
Therefore, $z \otimes y \in \I'$; by the analogous argument $y \otimes z$ is also in $\I'$. Therefore $\I'$ is a thick ideal. 
\end{proof}

\subsection{The Balmer spectrum}\label{subsec:Balmerspec}
Prime ideals in a \mtc\ $\K$ comprise the Balmer
spectrum of $\K$, defined as follows; this is a topological space which serves as the 
setting for universal support varieties. 
These notions were first introduced in~\cite{Bal} and generalized to the 
noncommutative setting in \cite{BKS2007,NVY1}. 

\begin{defn}
Let $\K$ be a \mtc. The \emph{Balmer spectrum of $\K$} is the
topological space 
       \[\Spc(\K) = \{\P\subseteq \K\mid \P \textup{\ is a prime ideal in\ }
         \K\} \]
       with closed sets the arbitrary intersections of the
       base of closed sets given by all 
       \[\supp_\K(a) = \{\P\in \Spc(\K)\mid a\not\in \P\}\]
       for objects $a\in \K$. Then $\supp_\K$ may be taken to be a map
       from $\obj(\K)$ to closed sets in $\Spc(\K)$,
       called the \emph{Balmer support}.
\end{defn}

By definition, an arbitrary closed set of $\Spc(\K)$ has the form
\[
\supp_{\K}(\mS):=\{\P \in \Spc(\K) \mid \mS \cap \P = \varnothing\}
\]
for some collection $\mS$ of objects in $\K$.

We will omit the subscript and write $\supp$ for $\supp_{\K}$
when $\K$ is understood from context.

Recall that a subset of a topological space is called {\emph{Thomason}} if it is a union of closed subsets, each of which is the complement of a quasi-compact set. In particular, if the space is Noetherian, then Thomason subsets coincide with specialization-closed subsets, that is, unions of closed subsets. We will denote the collection of Thomason subsets of a topological space $X$ by $\Thom(X)$. 

A \mtc{} $\K$ is called rigid if every object has a left and a right dual; we recall this definition in more detail below in Section \ref{subsect:duals}. In this case, every thick ideal of $\K$ is equal to the intersection of prime ideals which contain it \cite[Proposition 4.1.1]{NVY2}, and ideals of $\K$ are often classified by Thomason subsets of $\Spc (\K)$. 
In particular, the sufficiency of conditions (a)--(d) of Theorem~\ref{thm:gen-idealsclass} below were verified, respectively, in \cite[Theorem 4.10]{Bal}, \cite[Corollary 22]{MallickRay2023}, \cite[Theorem 4.6]{Rowe}, and \cite[Theorem A.7.1]{NVY4}. 

\begin{thm}
\label{thm:gen-idealsclass}
Let $\K$ be a rigid \mtc\ that satisfies any of the following conditions:
\begin{enumerate}[\qquad \rm(a)]
\item $\K$ is braided;
\item $\K$ is completely prime;
\item $\Spc (\K)$ is Noetherian;
\item $\K$ has a thick generator.
\end{enumerate}
Then the maps
\begin{center}
\begin{tikzcd}
\I \arrow[rr, maps to]                &  & \bigcup_{x \in \I} \supp(x)           \\
\Ideals(\K) \arrow[rr, bend left]   &  & \Thom(\Spc (\K)) \arrow[ll, bend left] \\
\{x \in \K \mid \supp(x) \subseteq S\} &  & S \arrow[ll, maps to]                  
\end{tikzcd}\end{center}
give a bijection between thick ideals of $\K$ and Thomason subsets of $\Spc( \K)$.
\end{thm}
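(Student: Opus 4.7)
The plan is to construct a uniform proof across all four cases by leveraging the rigidity hypothesis together with the key structural result [NVY2, Prop.~4.1.1] recorded in the discussion preceding the theorem: in any rigid \mtc{} every thick ideal equals the intersection of the prime ideals containing it. First I would verify that the two maps are well-defined. The set $\sigma(\I) := \bigcup_{x \in \I} \supp(x)$ is Thomason-closed by construction, since each basic closed set $\supp(x)$ is the complement of the quasi-compact open $\{\P : x \in \P\}$. Conversely, $\tau(S) := \{x \in \K : \supp(x) \subseteq S\}$ is a thick ideal: thickness follows from standard support calculus (closure under shifts, summands, and cones in distinguished triangles), while the ideal property reduces to the inclusions $\supp(x \otimes y), \supp(y \otimes x) \subseteq \supp(x)$, which hold because prime ideals are two-sided and so $x \in \P$ forces $x \otimes y, y \otimes x \in \P$.

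The inclusion $S \subseteq \sigma(\tau(S))$ is immediate from the observation that any Thomason-closed $S$ is by definition a union of basic closed sets $\supp(x_\alpha)$, each of which satisfies $\supp(x_\alpha) \subseteq S$ and hence $x_\alpha \in \tau(S)$; the reverse inclusion is tautological from the definition of $\tau$. The heart of the argument is $\tau(\sigma(\I)) = \I$. The inclusion $\I \subseteq \tau(\sigma(\I))$ is clear. For the converse, suppose $x$ satisfies $\supp(x) \subseteq \sigma(\I) = \bigcup_{y \in \I} \supp(y)$; I would show $x$ lies in every prime $\P \supseteq \I$, from which the intersection-of-primes formula [NVY2, Prop.~4.1.1] will give $x \in \I$. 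Indeed for such $\P$, every $y \in \I$ lies in $\P$, so $\P \notin \supp(y)$ for all $y \in \I$, hence $\P \notin \sigma(\I)$; since $\supp(x) \subseteq \sigma(\I)$, we conclude $\P \notin \supp(x)$, i.e.\ $x \in \P$, as required.

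The main obstacle in the noncommutative setting is precisely the intersection-of-primes formula, since without some additional control over how tensor products interact with primes one cannot reconstruct ideals from their supports. This is the only place where conditions (a)--(d) enter: each of them is known in its respective reference to guarantee enough structure for this formula (or a direct bijective argument) to go through, with (a) and (b) in fact strengthening the conclusion to a tensor-product property $\supp(x \otimes y) = \supp(x) \cap \supp(y)$. I would close the proof by citing [Bal, Thm.~4.10], [MallickRay2023, Cor.~22], [Rowe, Thm.~4.6], and [NVY4, Thm.~A.7.1] respectively for the four conditions, noting that the uniform template above reduces each case to the verification performed in that source.
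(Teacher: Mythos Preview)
Your uniform template has a genuine gap, and hypotheses (a)--(d) enter at a different place than you claim.

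The step $\tau(\sigma(\I)) = \I$ via the intersection-of-primes formula is fine, but that formula from [NVY2, Proposition~4.1.1] holds for \emph{every} rigid \mtc{}, as the paper records immediately before the theorem. If this were truly ``the only place where conditions (a)--(d) enter,'' the bijection would follow from rigidity alone; yet the paper explicitly points (just after the theorem) to rigid \mtc{s} for which the bijection fails [HuangVashaw2025, Example~7.7]. So your argument, as written, proves too much.

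The actual gap is in the step you call immediate, $S \subseteq \sigma(\tau(S))$. You assert that any Thomason-closed $S$ is by definition a union of basic closed sets $\supp(x_\alpha)$, but that is precisely what needs (a)--(d). A closed subset of $\Spc(\K)$ with quasi-compact complement is in general only a finite \emph{intersection} $\supp(x_1) \cap \cdots \cap \supp(x_n)$. Under (a) or (b) the tensor product property collapses this to $\supp(x_1 \otimes \cdots \otimes x_n)$; under (d) a thick generator $g$ yields $\supp(x_1) \cap \cdots \cap \supp(x_n) = \supp(x_1 \otimes g \otimes x_2 \otimes \cdots \otimes g \otimes x_n)$; case (c) requires a separate Noetherian argument. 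Without one of these hypotheses, such an intersection need not be of the form $\supp(x)$, and the inclusion $S \subseteq \sigma(\tau(S))$ does not follow.

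For comparison, the paper gives no in-text proof of the theorem: it simply cites the four references for the four cases.
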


Classification of thick ideals by Thomason subsets does not come for free: there are examples of \mtc{s}\ not satisfying any of the conditions of Theorem \ref{thm:gen-idealsclass} for which thick ideals are not parametrized by Thomason subsets of the Balmer spectrum (see \cite[Example 7.7]{HuangVashaw2025}). The stable left-right projective category from Section \ref{sect:ex} in which we are interested is not known a priori to satisfy any of the conditions of Theorem \ref{thm:gen-idealsclass}, and indeed in examples can be shown explicitly not to satisfy (a) and (d) (see for instance Section \ref{subsect:cyclic} below). 
We will pursue this aspect further later.

There is a more general setup we have in mind for these categories beyond just classifying thick ideals, namely actions and submodule categories.

\subsection{Actions} We recall the notion of an action of a \mtc\
on a triangulated category; see~\cite{JK} for the full definition. 
Let $\mathcal{K} = (\K, \Sigma, \otimes, \unit, \mathfrak{a}, \mathfrak{l},\mathfrak{r},\lambda,\varrho)$ 
be a \mtc. An \emph{action}
  of $\K$ on a triangulated category
  $\mathcal{A}=(\mathcal{A},T)$ 
with suspension $T$ is given by the following:
\begin{enumerate}[\rm(i)]
\item a bifunctor
  $ -*-\colon \K\times \A\to \A $ 
  that is exact in each variable,
\item
  a natural isomorphism 
  $ \mathfrak{a}' \colon (-\otimes -)*- \to -*(-*-) $
  satisfying the Pentagon Axiom, 
\item a natural isomorphism 
  $ \mathfrak{l}'\colon\mathfrak{e}*- \to 1_\A $
  for which $\mathfrak{l}$,
  $\mathfrak{r}$, $\mathfrak{a}'$, $\mathfrak{l}'$ satisfy two
  Triangle Axioms, and
\item natural isomorphisms 
  \[\lambda'\colon -* T(-)\to T(-*-) \ 
  \text{ and } \ 
  \varrho'\colon \Sigma(-)*-\to T(-*-)  \]
 such that
\item the diagram
  \[\xymatrix{
      \mathfrak{e}*T(a) \ar[r]^{\mathfrak{l}'_{T(a)}} \ar[d]^{\lambda'}
        & T(a)\ar@{=}[d] \\
        T(\mathfrak{e}*a)\ar[r]^{T(\mathfrak{l}'_a)} & T(a)}\]
    is commutative for all $a$ in $\A$, and
\item the diagram
  \[\xymatrix{
\Sigma(x)*T(a)\ar[r]^{\varrho'_{x,T(a)}}
\ar[d]^{\lambda'_{\Sigma(x),a}} & T(x*T(a))\ar[d]^{T(\lambda'_{x,a})}\\
T(\Sigma(x)*a)\ar[r]^{T(\varrho'_{x,a})} & T^2(x*a)
      }\]
   is anti-commutative for all $x$ in $\K$ and $a$ in $\A$.
  \end{enumerate}
This notion of an action naturally leads to that of $\K$-submodules of $\A$ as we
recall next.

\begin{defn}
Let $\K$ be a \mtc\ acting on a triangulated category $\A$. A thick
subcategory $\mathcal{U}$ of $\A$ is called a \emph{$\K$-submodule of
  $\A$} if $\K*\mathcal{U} \subseteq \mathcal{U}$. 
Denote the collection of all thick $\K$-submodule categories of $\A$ by $\Submods_{\K}(\A)$. 
\end{defn}

We turn to a generalization of the problem raised at the end of
Section \ref{subsec:Balmerspec}. Namely, is there a correspondence
between $\K$-submodules of $\A$ and Thomason subsets of some subset of $\Spc (\K)$? To make this question precise, we define notions of support
based on actions next.

\subsection{The cohomological support of an action} 
\label{sect:cohomsupport}
We recall graded homomorphisms and endomorphisms  and the support varieties defined from them.
For complete definitions and theory, see for instance~\cite{BIK2008,BKSS}. We continue our assumption from the previous section that $\K$ is a \mtc\ acting on a triangulated category $\A$.

We define the graded endomorphism ring of $\K$ by
\[  
  \cohom(\K)   := \End_{\K}^{\bu}(\mathfrak{e})
  = \oplus_{i\geq 0} \Hom_{\K}(\mathfrak{e}, \Sigma^i(\mathfrak{e})),
\] 
which is known to be  graded commutative (see, for example,~\cite{SuarezAlvarez04}). 
When $\K$ is understood from context, we will abbreviate $\cohom(\K)$ to just $\cohom$.

For any object $a$ in $\A$ there is a homomorphism of graded rings
\[\gamma_a\colon \cohom\to \End_\A^{\bu}(a)=
    \oplus_{i\geq 0} \Hom_{\A} (a, T^i(a))\]
given as follows. Let $g\colon \mathfrak{e}\to
\Sigma^i(\mathfrak{e})$ and define $\gamma_a(g)$ to be the composition  
\[  a \extto{\mathfrak{l}'^{-1}} \mathfrak{e}* a
  \extto{g * 1_{a}} \Sigma^i(\mathfrak{e})* a
  \extto{\rho'_i} T^i(\mathfrak{e} * a)
  \extto{T^i(\mathfrak{l}')} T^i(a).\]
For any two objects $a,b$ in $\A$, the two ring homomorphisms
$ \gamma_a\colon \cohom\to \End_\K^{\bu}(a)$
and
$ \gamma_b\colon \cohom \to \End_\K^{\bu}(b)$ 
give rise to two actions of $\cohom$ on
\[\Hom_\A^{\bu}(a,b) = \oplus_{i\geq 0}\Hom_\A(a, \Sigma^i(b))\]
as follows. Let $g\colon \mathfrak{e}\to
\Sigma^i(\mathfrak{e})$ and $f\colon a\to \Sigma^j(b)$ and define 
\[g\cdot f = \Sigma^j(\gamma_b(g))f \ 
\text{ and } \ 
f\cdot g = \Sigma^i(f)\gamma_a(g).\]
It can be shown that 
$g\cdot f = (-1)^{ij}f\cdot g$. 

The \emph{cohomological support variety} $V_{\sfH}(a)$ of an object $a$ in $\A$ is 
\[V_{\sfH}(a) = \{ \mathfrak{p}\in \Proj(\cohom)\mid
  \Ann_{\cohom}(\End^{\bu}_{\A}(a))\subseteq \mathfrak{p}\},\]
where $\Proj(\cohom)$ is the topological
space consisting of all non-irrelevant graded prime ideals in $\cohom$ with closed
sets given by all 
\[Z(I) = \{\mathfrak{p}\subseteq \Proj(\cohom) \mid I \subseteq \mathfrak{p}\}\]
for graded ideals $I$ in $\cohom$.

\begin{rem}
One can also define the cohomological support varieties using the homogeneous spectrum $\Spech(\cohom)$ of $\cohom$ rather than $\Proj (\cohom)$. Classically, the analogous support varieties defined on $\MaxSpec (\cohom)$ rather than $\Proj(\cohom)$ or $\Spech(\cohom)$ are also sometimes used, see for instance \cite{AvruninScott1982}. However, from the perspective of tensor triangular geometry and thick ideal classification, either $\Proj$ or $\Spech$ is likely closer to the Balmer spectrum than $\MaxSpec$, since in the Balmer spectrum irreducible closed sets have generic points \cite[Proposition 2.18]{Bal}.
\end{rem}

Since $\K$ acts on itself via its tensor product, 
we also obtain the support variety of
an object $x$ in $\K$ as
\[V_{\sfH}(x) = \{ \mathfrak{p}\in \Spech(\cohom)\mid
  \Ann_{\cohom}(\End^{\bu}_{\K}(x))\subseteq \mathfrak{p}\}.\]

\begin{exam}
Let $\K$ be a finite tensor category.
The cohomological support with respect to the action of the stable category of $\K$ on itself
is precisely the classical cohomological support discussed for example in \cite{BPW2021}. 
On the other hand, if $\Lambda$ is a finite dimensional selfinjective algebra,
then the cohomological support with respect to the action 
of the stable left-right projective category $\ulrp(\Lambda^{\env})$ on itself or on $\umod(\Lambda)$
is precisely the Hochschild cohomology support as in \cite{SnashallSolberg2004,Solberg2006}.
\end{exam}

\subsection{The categorical center and the comparison map}
\label{subsect:catcenter}

For a general \mtc{} $\K$, we now have two versions of support: the Balmer support $\supp_{\K}$, and the cohomological support $V_{\sfH}$. In order to compare these two supports, we recall the categorical center, a particular subalgebra of $\cohom(\K)$. 
There is in general a comparison map $\rho$ from the Balmer spectrum $\Spc (\K)$ to the homogeneous spectrum of the categorical center $\ccent(\K) \subseteq \cohom(\K)$, by \cite[Theorem B]{NVY3}, which we also recall now. 

The {\em categorical center} $\ccent(\K)$, with respect to some fixed set $\mS$ of thick generators for $\K$, is defined to be all $g \in \cohom(\K)$ such that the diagram
\begin{center}
\begin{tikzcd}
\unit \otimes x \arrow[d, "f \otimes \id_x"] \arrow[r, "\cong"] & x          & x \otimes \unit \arrow[l, "\cong"'] \arrow[d, "\id_x \otimes f"] \\
\Sigma^i \unit \otimes x \arrow[r, "\cong"]                     & \Sigma^i x & x \otimes \Sigma^i \unit \arrow[l, "\cong"']                    
\end{tikzcd}
\end{center}
commutes for all $x$ in $\mS$. When $\K$ is thickly generated by its tensor unit $\unit$, then the categorical center relative to the generating set $\{\unit\}$ is the whole cohomology ring $\cohom$. When $\K$ and $\mS$ are understood from context, we will abbreviate $\ccent(\K)$ by $\ccent$. The comparison map $\rho$ mentioned above is explicitly defined via
\begin{align*}
\rho \colon \Spc (\K) & \to \Spech(\ccent )\\
\P & \mapsto \langle f \in \ccent (\K) \mid f \text{ is homogeneous, and }\cone(f) \not \in \P\rangle.
\end{align*}
If $\K$ is a  finite tensor category, then contingent on the Etingof--Ostrik 
cohomological finite generation conjecture \cite{EO2004}, one can use the correspondence between complexity of an object and dimension of its support variety \cite[Corollary 4.2]{BPW2021} to show that $\rho$ induces a surjective map $\Spc (\K) \to \Proj (\ccent)$ \cite[Corollary 7.1.3, Theorem 7.2.1(b)]{NVY3}. 
If $\K$ is braided, the categorical center is the whole cohomology ring $\cohom$, and the comparison map $\rho$ was introduced by Balmer in \cite{Balmer2010}. 

We now replace $\cohom$ in the cohomological support $V_{\sfH}$ by the
categorical center $\ccent$, to define the central cohomological support variety for any $x \in \K$:
\[
   V_{\sfC}(x) = \{ \mathfrak{p} \in \Proj (\ccent) \mid
    \Ann_{\ccent}(\End_{\K}(x))\subseteq \mathfrak{p}\} .
\]
Under the finite generation condition mentioned above, the comparison map $\rho$ is a closed map, and 
\[
\rho(\supp_{\K}(M)) = V_{\sfC}(M)
\]
for all $M$ in $\K$; see \cite[Proposition 8.4]{CV}. It is conjectured in \cite[Conjecture E]{NVY3} that for all finite tensor categories, $\rho$ is a homeomorphism. 

\begin{rem}
Note that one can construct similar supports and comparison maps substituting other invertible objects besides $\Sigma( \unit)$, as is done in \cite[Section 3]{Balmer2010}, which is useful in particular in algebraic geometry (see \cite[Remark 8.2]{Balmer2010}). 
\end{rem}

\subsection{Rickard idempotents and the Stevenson support for an action}
\label{subsect:RickStevenson}

Assume now that $\K$ is the compact part of a rigidly-compactly generated \mtc{}. That is, assume there is a \mtc{} $\widehat{\K}$ that contains all set-indexed coproducts, whose rigid objects coincide with the compact objects, and having subcategory of compact objects $\widehat{\K}^c \cong \K$.

In addition to the cohomological support of an action recalled above, 
there is a notion of support for an action of a \mtc\  $\K$ on $\A$ which 
takes values in the Balmer spectrum of $\K$, defined in the symmetric case 
by Stevenson \cite{Stevenson}. This support is based on the Rickard idempotent 
functors (as defined originally by Neeman \cite{Neeman1992} and Rickard 
\cite{Rickard1997}). We recall these idempotent functors now. 

\begin{thm}[Rickard~\cite{Rickard1997}]
\label{thm:Rickard}
Let $\I$ be a thick subcategory of $\K$. There exist functors 
${\it{\Gamma}}_{\I}$ and $L_{\I}\colon \widehat{\K} \to \widehat{\K}$ such that
\begin{enumerate}[\qquad \rm(a)]
    \item $\it{\Gamma}_{\I}$ takes values in $\Loc(\I)$, the localizing subcategory of $\widehat{\K}$ generated by $\I$;
    \item $L_{\I}$ takes values in $\Loc(\I)^{\perp}$, that is, the subcategory of objects admitting no maps from $\Loc(\I)$; 
    \item for any object $A \in \widehat{\K}$, there is a unique distinguished triangle
\[
\it{\Gamma}_{\I} A \to A \to L_{\I} A.
\]
\end{enumerate}
\end{thm}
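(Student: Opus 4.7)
\medskip

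\noindent\textbf{Proof proposal.} The plan is to construct $\Gamma_\I$ and $L_\I$ as the two terms in a Bousfield localization triangle associated to the localizing subcategory $\Loc(\I)\subseteq\widehat{\K}$. Since $\I$ is a thick subcategory of the essentially small category $\K = \widehat{\K}^c$, it is itself essentially small, so we may fix a set $\mS$ of objects whose closure under sums, shifts, triangles, and summands is $\I$. The objects of $\mS$ are compact in $\widehat{\K}$ by hypothesis, so $\Loc(\I)=\Loc(\mS)$ is a localizing subcategory of the rigidly-compactly generated triangulated category $\widehat{\K}$ generated by a set of compact objects. This is precisely the setting of Neeman's localization theorem (cf.\ Neeman's work on Bousfield localization and Brown representability for well-generated triangulated categories).

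\smallskip

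First, I would invoke Neeman's theorem to obtain a Bousfield localization functor $L_\I\colon \widehat{\K}\to\widehat{\K}$ with kernel $\Loc(\I)$ and essential image $\Loc(\I)^\perp$, together with a natural transformation $\eta\colon 1_{\widehat{\K}}\to L_\I$ whose cone lies in $\Loc(\I)$. Define $\Gamma_\I A$ via the completion to a distinguished triangle
\[
\Gamma_\I A \longrightarrow A \xrightarrow{\eta_A} L_\I A \longrightarrow \Sigma\,\Gamma_\I A.
\]
Then (a) follows because $\Gamma_\I A$ is (up to shift) the fiber of $\eta_A$, hence lies in $\Loc(\I)$; (b) is built into the construction of $L_\I$; and for (c) uniqueness, any two such triangles differ by an automorphism of $A$ extending to an isomorphism of triangles, because $\Hom(\Loc(\I),\Loc(\I)^\perp)=0$ forces the fiber and cofiber to be functorially determined. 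The functoriality of $\Gamma_\I$ then follows from the functoriality of $L_\I$ together with the axiom TR3 applied to morphisms $A\to B$.

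\smallskip

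An alternative, more hands-on route (which I would sketch as a backup) is to build $\Gamma_\I A$ as a homotopy colimit along a transfinite sequence obtained by iteratively attaching cones on maps from objects of $\mS$ into the successive terms (a ``cellular approximation'' in the sense of Keller--Vossieck/Rickard). The resulting map $\Gamma_\I A\to A$ has cone in $\Loc(\I)^\perp$ by a small-object/cofinality argument: compactness of the generators ensures that any map from an object of $\I$ into the cone factors through an earlier stage and is thus killed.

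\smallskip

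The main obstacle is precisely the existence of the Bousfield localization, i.e.\ ensuring that $\Loc(\I)^\perp$ is a reflective subcategory of $\widehat{\K}$. This is where compactness of the generators of $\I$ is essential --- without it, Brown representability may fail and the idempotent $L_\I$ need not exist. Once that input is available (from Neeman's general theorem), the rest --- the triangle, the orthogonality, and the uniqueness --- is formal.
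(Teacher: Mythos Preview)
The paper does not supply its own proof of this statement: it is quoted as a result of Rickard and simply cited, with no argument given. Your proposal is correct and is the standard modern route---invoking Neeman's Brown representability/Bousfield localization for a localizing subcategory generated by a set of compact objects, then defining $\Gamma_{\I}A$ as the fiber of the unit $A\to L_{\I}A$. The ``hands-on'' alternative you sketch, building $\Gamma_{\I}A$ as a homotopy colimit of successive cellular approximations, is in fact closer in spirit to Rickard's original construction in the cited reference. Either approach works here; the Neeman route is cleaner and packages the existence step into a single black box, while the cellular approach is more explicit and makes the role of compactness of the generators visible at the level of the small-object argument.
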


The functors ${\it{\Gamma}}_{\I}$ and $L_{\I}$ are called the {\it{Rickard idempotent functors}}. 
We recall some of the basic properties of these functors for 
reference (cf.~ \cite[Section 3]{NVY4}).

\begin{thm}
\label{thm:Rickard-props}
Let $\K$ be the compact part of a compactly-generated monoidal triangulated 
category $\widehat{\K}$, and let $\I$ be a thick subcategory of $\K$. 
The Rickard idempotent functors satisfy:
\begin{enumerate}[\qquad \rm(a)]
\item $x \in \K$ is in $\Loc(\I)$ if and only if $L_{\I}(x) \cong 0$;
\item if $\I$ is a right ideal of $\K$, then $L_{\I} x \cong L_{\I} \unit \otimes x$ 
and ${\it{\Gamma}}_{\I} x \cong {\it{\Gamma}}_{\I} \unit \otimes x$, for all $x \in \K$;
\item if $\I$ is a left ideal of $\K$, then $L_{\I} x \cong x \otimes L_{\I} \unit$ 
and ${\it{\Gamma}}_{\I} x \cong x \otimes {\it{\Gamma}}_{\I} \unit$ for all $x \in \K$;
\item if $\I$ is either a left or right ideal, then 
$L_{\I} \unit \otimes L_{\I} \unit \cong L_{\I} \unit$ 
and ${\it{\Gamma}}_{\I} \unit \otimes {\it{\Gamma}}_{\I} \unit \cong {\it{\Gamma}}_{\I} \unit$.
\end{enumerate}
\end{thm}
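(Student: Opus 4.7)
The plan is to reduce every claim to the uniqueness statement in Theorem~\ref{thm:Rickard}(c): for any $a \in \widehat{\K}$, a distinguished triangle $a' \to a \to a'' \to \Sigma a'$ with $a' \in \Loc(\I)$ and $a'' \in \Loc(\I)^{\perp}$ is necessarily isomorphic to the canonical Rickard triangle, so $a' \cong {\it{\Gamma}}_{\I} a$ and $a'' \cong L_{\I} a$. Two recurring tools will be used throughout: (i) $\Loc(\I)$ is closed under tensoring by objects of $\widehat{\K}$ on the appropriate side whenever $\I$ is a one-sided ideal, because $-\otimes y$ and $y\otimes-$ preserve coproducts and triangles, so the class of objects tensoring into $\Loc(\I)$ is localizing and already contains $\I$; and (ii) $\Loc(\I)^{\perp}$ enjoys the corresponding closure, via rigidity of the compact objects of $\widehat{\K}$.

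For part (a), I would start from the fundamental triangle ${\it{\Gamma}}_{\I} x \to x \to L_{\I} x \to \Sigma {\it{\Gamma}}_{\I} x$. If $x \in \Loc(\I)$, then the middle map lies in $\Hom(\Loc(\I), \Loc(\I)^{\perp}) = 0$, so the triangle splits and $L_{\I} x$ becomes a summand of $\Sigma {\it{\Gamma}}_{\I} x \in \Loc(\I)$; but $L_{\I} x$ is also in $\Loc(\I)^{\perp}$, and $\Loc(\I) \cap \Loc(\I)^{\perp} = 0$ (any object in the intersection has vanishing endomorphism ring), so $L_{\I} x \cong 0$. The converse is immediate, since $L_{\I} x \cong 0$ forces ${\it{\Gamma}}_{\I} x \to x$ to be an isomorphism.

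The main content is part (b), and this is where I expect the key obstacle to sit. I would tensor the fundamental triangle for $\unit$ on the right with $x$ to obtain
\[
{\it{\Gamma}}_{\I} \unit \otimes x \to x \to L_{\I} \unit \otimes x \to \Sigma({\it{\Gamma}}_{\I} \unit \otimes x),
\]
and verify that this is the Rickard triangle for $x$. The first condition, ${\it{\Gamma}}_{\I}\unit \otimes x \in \Loc(\I)$, follows by tool (i): since $\I$ is a right ideal, the localizing class $\{y \in \widehat{\K} : y \otimes x \in \Loc(\I)\}$ contains $\I$ and therefore $\Loc(\I)$. The harder step is $L_{\I}\unit \otimes x \in \Loc(\I)^{\perp}$. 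Here I would use that $x \in \K$ is compact, hence rigid in $\widehat{\K}$, so that $-\otimes x$ has a right adjoint given by tensoring with a dual ${}^{\vee}\! x \in \K$. For $z \in \Loc(\I)$, the adjunction rewrites $\Hom(z, L_{\I}\unit \otimes x)$ as $\Hom(z \otimes {}^{\vee}\! x, L_{\I}\unit)$; applying tool (i) to ${}^{\vee}\! x \in \K$ shows $z \otimes {}^{\vee}\! x \in \Loc(\I)$, and this hom vanishes because $L_{\I}\unit \in \Loc(\I)^{\perp}$. Uniqueness of the Rickard triangle then delivers both isomorphisms in (b). Part (c) is the mirror argument, tensoring on the left.

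For (d), specialize (b) to $x = L_{\I}\unit$ to get $L_{\I}(L_{\I}\unit) \cong L_{\I}\unit \otimes L_{\I}\unit$. It then suffices to observe that $L_{\I}$ fixes objects of $\Loc(\I)^{\perp}$: for any $y \in \Loc(\I)^{\perp}$, the map ${\it{\Gamma}}_{\I} y \to y$ in the fundamental triangle again lies in $\Hom(\Loc(\I), \Loc(\I)^{\perp}) = 0$, which by the splitting argument of part (a) forces ${\it{\Gamma}}_{\I} y = 0$ and thus $L_{\I} y \cong y$. Applied to $y = L_{\I}\unit$, this gives $L_{\I}\unit \otimes L_{\I}\unit \cong L_{\I}\unit$. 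The analogous computation for ${\it{\Gamma}}_{\I}\unit$, using (b) or (c) with $x = {\it{\Gamma}}_{\I}\unit$ and the dual idempotence statement for ${\it{\Gamma}}_{\I}$ on objects of $\Loc(\I)$, is symmetric.
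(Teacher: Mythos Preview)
The paper does not prove this theorem at all: it is stated as a recalled result with a citation to \cite[Section~3]{NVY4}. So there is no in-paper proof to compare against, and I can only comment on the soundness of your argument.

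Your proofs of (a), (b), and (c) are fine. The one genuine slip is in (d). You write ``specialize (b) to $x = L_{\I}\unit$'' and later ``using (b) or (c) with $x = {\it{\Gamma}}_{\I}\unit$'', but (b) and (c) are stated, and your argument proves them, only for $x \in \K$. The objects $L_{\I}\unit$ and ${\it{\Gamma}}_{\I}\unit$ are almost never compact, and your proof of (b) genuinely used compactness of $x$: the step $\Hom(z, L_{\I}\unit \otimes x) \cong \Hom(z \otimes {}^{\vee}\! x, L_{\I}\unit)$ needs $x$ to have a dual.

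The fix is short. Since $\I$ consists of compact objects, $\Loc(\I)^{\perp} = \I^{\perp}$ is closed under coproducts, hence is a localizing subcategory. Then for fixed $y \in \Loc(\I)^{\perp}$ the class $\{w \in \widehat{\K} : y \otimes w \in \Loc(\I)^{\perp}\}$ is localizing; by your rigidity argument it contains $\K$, and therefore equals $\widehat{\K}$. Combined with your tool~(i), this shows that the tensored triangle ${\it{\Gamma}}_{\I}\unit \otimes x \to x \to L_{\I}\unit \otimes x$ is the Rickard triangle for \emph{every} $x \in \widehat{\K}$, and then your derivation of (d) goes through unchanged.
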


Using Rickard idempotent functors, Balmer and Favi developed in \cite{BF} a 
version of support for non-compact objects in a symmetric tensor triangulated 
category $\K$ based on $\Spc (\K)$, parallel to the big support developed 
by Benson--Iyengar--Krause in \cite{BIK2008}. This theory was fully generalized 
to the noncommutative case by Cai and the second author in \cite{CV}. 
In the symmetric case, Stevenson extended this support to actions \cite{Stevenson}. 
This generalizes immediately to the noncommutative 
case, as we now record. We restrict to the case that $\Spc (\K)$ is Noetherian for simplicity. 

First, some notation. Let $\P \in \Spc (\K)$, that is, $\P$ is a prime thick ideal in 
$\K$. We define two specialization-closed subsets of $\Spc (\K)$ corresponding to 
$\P$ (cf. \cite[Definition 3.21]{Stevenson2018}): 
\begin{align*}
    \mathcal{V}(\P) &:=\overline{\{\P\}} = \{\Q \in \Spc (\K) \mid \Q \subseteq \P\},\\
    \mathcal{Z}(\P) &:= \{\Q \in \Spc (\K) \mid \P \not \in \mathcal{V}(\Q)\} = \{ \Q \in \Spc (\K) \mid \P \not \subseteq \Q\}.
\end{align*}
Now we set
\[
{\it{\Gamma}}_{\P} := {\it{\Gamma}}_{\mathcal{V}(\P)} \circ L_{\mathcal{Z}(\P)}.
\]
We emphasize that whenever we write ${\it{\Gamma}}_{\P}$ for $\P \in \Spc (\K)$, we use the above definition, rather than the other potential meaning for ${\it{\Gamma}}_{\P}$ which comes from Theorem \ref{thm:Rickard}. Note that by Theorem \ref{thm:Rickard-props}(b), (c) the functor ${\it{\Gamma}}_{\P}$ is given by the tensor product with ${\it{\Gamma}}_{\mathcal{V}(\P)} \unit \otimes L_{\mathcal{Z}(\P)} \unit \cong L_{\mathcal{Z}(\P)} \unit \otimes {\it{\Gamma}}_{\mathcal{V}(\P)} \unit$.

Finally, we define the support of an object $a \in \A$.

\begin{defn}
Let $a \in \A$. The {\emph{Stevenson module-theoretic support of $a$}} is 
\[
\supp^*(a):=\{\P \in \Spc (\K) \mid {\it{\Gamma}}_{\P} \unit * a \not = 0\} \subseteq \Spc (\K).
\]
\end{defn}

\begin{exam}
    By \cite[Proposition 3.8]{CV}, if we take $\A=\K$ and the tensor action to be the canonical one, then $\supp^*(a) = \supp(a).$
\end{exam}

\begin{rem}
Part of the usefulness of defining support in terms of Rickard idempotents is that it gives a coherent notion of support for non-compact objects. However, in this paper we will focus on applications to compact objects, so we do not emphasize the non-compact part of the story here. 
\end{rem}

The proof of the next proposition is standard.
For example, one of the containments in part (d),
that $\supp^*(x * a) \subseteq \supp^*(a)$, follows from 
properties of Rickard idempotents (see Theorem~\ref{thm:Rickard-props}(b) and (c)). 

\begin{prop}
    \label{prop:support-gamma-props}
    The module-theoretic support $\supp^*$ on $\A$ based in $\Spc (\K)$ satisfies 
the following properties:
    \begin{enumerate}[\qquad \rm(a)]
        \item $\supp^*(0) = \varnothing$;
        \item $\supp^*(\Sigma a) =\supp^*(a)$ for any $a \in \A$;
        \item if $a \to b \to c \to \Sigma a$ is a distinguished triangle 
in $\A$, then 
  \[ \supp^*(a) \subseteq \supp^*(b) \cup \supp^*(c);\]
        \item $\supp^*(x*a)\subseteq \supp (x) \cap \supp^*(a)$ for all 
$x \in \K$ and $a \in \A$.
    \end{enumerate}
\end{prop}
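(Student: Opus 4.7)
The plan is to argue all four parts directly from the definition $\supp^*(a) = \{\P : {\it{\Gamma}}_{\P}\unit * a \neq 0\}$, leveraging the exactness of the action bifunctor and the various properties of Rickard idempotents collected in Theorem~\ref{thm:Rickard-props}.

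Parts (a) and (b) are immediate: exactness of ${\it{\Gamma}}_{\P}\unit * -$ gives ${\it{\Gamma}}_{\P}\unit * 0 = 0$, and the natural isomorphism $\lambda'$ produces ${\it{\Gamma}}_{\P}\unit * \Sigma a \cong T({\it{\Gamma}}_{\P}\unit * a)$, so one side vanishes iff the other does. For (c), I would apply the exact functor ${\it{\Gamma}}_{\P}\unit * -$ to the given triangle to obtain a distinguished triangle in $\A$; then a standard two-out-of-three argument shows that if the middle and right terms are zero, so is the left, giving $\supp^*(a) \subseteq \supp^*(b) \cup \supp^*(c)$. (Equivalently, one rotates and uses (b).)

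The heart of the proof is (d), where both containments follow from manipulating the $\otimes$-action via Theorem~\ref{thm:Rickard-props}. The first key observation is that the thick ideals $\I_{\mathcal{V}(\P)}, \I_{\mathcal{Z}(\P)} \subseteq \K$ of objects with support contained in the corresponding specialization-closed subsets are automatically two-sided: for $y$ in one of these ideals and any $z \in \K$, membership of $y$ in a prime $\Q$ forces $z \otimes y, y \otimes z \in \Q$, so $\supp(z \otimes y), \supp(y \otimes z) \subseteq \supp(y)$. Hence by parts (b) and (c) of Theorem~\ref{thm:Rickard-props} applied to each of ${\it{\Gamma}}_{\mathcal{V}(\P)}, L_{\mathcal{Z}(\P)}$, we have ${\it{\Gamma}}_{\P} y \cong {\it{\Gamma}}_{\P}\unit \otimes y \cong y \otimes {\it{\Gamma}}_{\P}\unit$ for every $y \in \K$. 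For the containment $\supp^*(x*a) \subseteq \supp^*(a)$, associativity of the action together with this commutation of ${\it{\Gamma}}_{\P}\unit$ gives
\[
{\it{\Gamma}}_{\P}\unit * (x * a) \;\cong\; (x \otimes {\it{\Gamma}}_{\P}\unit) * a \;\cong\; x * ({\it{\Gamma}}_{\P}\unit * a),
\]
which vanishes when ${\it{\Gamma}}_{\P}\unit * a = 0$. For the containment $\supp^*(x*a) \subseteq \supp(x)$, suppose $x \in \P$; then $\Q \in \supp(x)$ means $x \notin \Q$, which by thickness of $\P$ forces $\P \not\subseteq \Q$, so $\supp(x) \subseteq \mathcal{Z}(\P)$. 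Thus $x \in \I_{\mathcal{Z}(\P)} \subseteq \Loc(\I_{\mathcal{Z}(\P)})$, and Theorem~\ref{thm:Rickard-props}(a) yields $L_{\mathcal{Z}(\P)} x = 0$, equivalently $L_{\mathcal{Z}(\P)}\unit \otimes x = 0$. Tensoring with ${\it{\Gamma}}_{\mathcal{V}(\P)}\unit$ gives ${\it{\Gamma}}_{\P}\unit \otimes x = 0$, and then ${\it{\Gamma}}_{\P}\unit * (x*a) \cong ({\it{\Gamma}}_{\P}\unit \otimes x) * a = 0$.

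The only real subtlety is that, because the tensor product on $\K$ is noncommutative, one must justify the interchange ${\it{\Gamma}}_{\P}\unit \otimes x \cong x \otimes {\it{\Gamma}}_{\P}\unit$ used in the associativity computation above; this is exactly what the two-sided ideal observation supplies, via parts (b) and (c) of Theorem~\ref{thm:Rickard-props}. Everything else is routine manipulation of exact functors and distinguished triangles.
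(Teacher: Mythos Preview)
Your proof is correct and follows exactly the approach the paper has in mind. The paper does not give a full proof, stating only that it is standard and pointing to Theorem~\ref{thm:Rickard-props}(b),(c) for the containment $\supp^*(x*a)\subseteq\supp^*(a)$ in part (d); your argument fleshes this out precisely, including the observation that the relevant thick ideals are two-sided so that ${\it\Gamma}_{\P}\unit$ commutes past $x$.
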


Using module-theoretic support, we now have two maps: 
\begin{center}
\begin{tikzcd}
\I \arrow[rr, maps to, "\Phi"]                &  & \bigcup_{a \in \I} \supp^*(a)           \\
\Submods_{\K}(\A) \arrow[rr, bend left]   &  & \Subsets(\Spc (\K)) \arrow[ll, bend left] \\
\{a \in \A \mid \supp^*(a) \subseteq S\} &  & S \arrow[ll, maps to, "\Theta"]                  
\end{tikzcd}\end{center}
where $\Subsets(\Spc (\K))$ is the collection of all subsets of $\Spc (\K)$.

Now suppose $\K$ is symmetric and 
satisfies the {\em{local-to-global principle}}, that is, for all
$a \in \widehat{\K},$ we have
\[
\Loc^{\otimes}(a) = \Loc^{\otimes}({\it{\Gamma}}_{\P} a \mid \P \in \Spc (\K)),
\]
where $\Loc^{\otimes}(a)$ is the smallest localizing tensor ideal of $\widehat{\K}$ containing $a$.
Stevenson proved in \cite[Lemma 4.12, Proposition 4.13]{Stevenson2018}
that the map $\Theta$ defined above is injective when restricted to $\Phi(\A)$. 
Note that as long as $\Spc (\K)$ is Noetherian, $\K$ satisfies the local-to-global principle (see \cite{Stevenson}, and \cite{CV} for the noncommutative case). 

\begin{defn}
\label{defn:param}
The action of $\K$ on $\A$ is {\em{parametrizing}} if $\Theta$ and $\Phi$ defined above give a bijection between submodule categories of $\A$ and Thomason subsets of $\Phi(\A)$. 
\end{defn}

When we take the action of $\K$ on itself by its tensor product, then as long as $\K$ is braided, completely prime, $\Spc (\K)$ is Noetherian, or $\K$ is thickly generated by a single object, then the action is parametrizing by Theorem \ref{thm:gen-idealsclass}. Stevenson proved in \cite[Theorem 6.13]{Stevenson2014} that when $R$ is a commutative Noetherian ring which is locally a hypersurface, then the action of the unbounded derived category of $R$ on the singularity category or stable derived category of $R$ (see \cite{Buchweitz, Orlov}) is parametrizing. Part of our motivation, moving forward, is to determine whether or not the naturally defined actions of $\ulrp(\Lambda^{\env})$ and its various subcategories on $\umod(\Lambda)$ are parametrizing.

\begin{rem}
We should emphasize again that the Stevenson module-theoretic support developed in \cite{Stevenson}, and the notion of parametrizing actions, were actually built not just to handle thick submodule categories, but in fact to study the lattice of localizing submodule categories. As remarked above, we will focus on the compact case in this paper, so we omit discussing this broader context here.
\end{rem}

\subsection{Possible correspondences}\label{subsec:correspondences}
Using all the general notions introduced above, we can now formulate more precisely the
problems and the objects we study.  For a given \mtc\ $\K$
acting on triangulated category $\A$, there are at least four objects of interest:
\begin{enumerate}[\rm(i)]
\item the Balmer spectrum $\Spc(\K)$ of $\K$,
\item the (left/right/two-sided) ideals in $\K$,
\item the $\K$-submodules of $\A$,
\item the spectrum of the graded endomorphism ring $\cohom = \End_\K^{\bu}(\mathfrak{e})$
  of the tensor identity $\mathfrak{e}$
  and that of its categorical center $\ccent$ with respect to a thick generating set of $\K$.
\end{enumerate}
The aim of the paper is to show how all these four objects are
related, and more. A guiding example is representations of finite
groups. In particular, the results are especially nice for
$p$-groups. If $G$ is a $p$-group and $k$ is a field of characteristic $p$, 
then there is a one-to-one
correspondence between thick subcategories of $\umod (kG)$ and
nonempty sets of closed homogenous subvarieties of $\Proj (\cohom(G,k))$
closed under specialization and finite unions (see \cite{BCR}). 

We know in general that (i) and (ii) above are related via Theorem \ref{thm:gen-idealsclass} as long as $\K$ is braided, has a Noetherian spectrum, has a thick generator, or is completely prime; however, these conditions do not hold for some examples we wish to study. Correspondences between (i) and (iii) have been extensively studied by Stevenson \cite{Stevenson} in the symmetric case, but up to now have not been studied in the noncommutative case. There is a comparison map between (i) and (iv), recalled above in Section \ref{subsect:catcenter}; this comparison map is a homeomorphism for many (but far from all!) \mtc{s}. It is conjectured always to be a homeomorphism for stable categories of finite tensor categories, but the noncommutative comparison map has, up to now, not been studied in examples outside of the finite tensor category setting.

\section{The stable left-right projective category of bimodules}
\label{sec:lrp}

Our prime examples of \mtc{s}\ are $\umod (A)$
and $\ulrp (\Lambda^{\env})$ for a finite dimensional Hopf algebra $A$ and
selfinjective algebra $\Lambda$ (as explained in Section \ref{sect:ex}). 
As we saw above, in particular in relation to Theorem \ref{thm:gen-idealsclass}, rigidity
is important. 
The category $\umod (A)$ is known to be rigid. 
In this section we show that $\ulrp (\Lambda^{\env})$ is rigid (cf.\ \cite[Exercise 2.10.16]{EGNO}) 
and describe some functors between $\umod (A)$ and $\ulrp (A^{\env})$.
These foundational results will help us, in subsequent sections, to 
study the monoidal triangular geometry of $\ulrp (A^{\env})$ and $\Thick(A) \subseteq \ulrp (A^{\env})$. 

\subsection{Left and right duals}
\label{subsect:duals}
First we recall definitions of left and right dual objects in a monoidal
(triangulated) category $\K$, where for the sake of simplicity, we regard
$\mathfrak{l}$ and $\mathfrak{r}$ as identity maps.
\begin{defn}
\begin{enumerate}[\rm(i)]
\item  An object $x$ in $\K$ has a \emph{left dual} object $x\ldual$ in $\K$ 
    if there are morphisms
    \[\ev_x\colon x\ldual\otimes x\to \mathfrak{e} \
    \text{ and } \ 
    \coev_x\colon \mathfrak{e}\to x\otimes x\ldual \]
    called \emph{evaluation} and \emph{coevaluation} morphisms, for which
    the compositions
    \[x\extto{\coev_x\otimes 1_x} (x\otimes x\ldual )\otimes
      x\extto{\mathfrak{a}} x\otimes (x\ldual\otimes x)\extto{1_x\otimes
        \ev_x} x \]
    and
 \[x\ldual\extto{1_{x\ldual}\otimes\coev_x} x\ldual\otimes (x\otimes
      x\ldual)\extto{\mathfrak{a}^{-1}} (x\ldual\otimes x)\otimes x\ldual\extto{\ev_x\otimes
        1_{x\ldual}} x\ldual \]
    are identity morphisms.
  \item An object $x$ in $\K$ has a \emph{right dual} object $\rdual x$ in $\K$ if there are
    morphisms
    \[\ev'_x\colon x\otimes {\rdual x}\to \mathfrak{e}\
    \text{ and }
    \coev'_x\colon \mathfrak{e}\to {\rdual x}\otimes x\]
    for which the compositions
    \[x\extto{1_x\otimes\coev'_x} x\otimes (\rdual x\otimes
      x)\extto{\mathfrak{a}^{-1}} (x\otimes {\rdual x})\otimes
      x\extto{\ev'_x\otimes 1_x} \rdual x\]
    and
 \[{\rdual x}\extto{\coev'_x\otimes 1_{\rdual x}} (\rdual x\otimes x)\otimes
      \rdual x\extto{\mathfrak{a}} \rdual x\otimes (x\otimes \rdual x)\extto{1_{\rdual x}\otimes\ev'_x} x\]
    are identity morphisms.
  \item The category $\mathcal{K}$ is \emph{rigid} if each
    object in $\mathcal{K}$ has a left and a right dual. 
    \end{enumerate}
  \end{defn}

\subsection{Dualities for finite dimensional selfinjective algebras}
\label{subsect:dual-selfinj}

Let $\Lambda$ be a finite dimensional selfinjective algebra over a field $k$. 
The enveloping algebra $\Lambda^\env = \Lambda\otimes_k \Lambda^{\opp}$ is also selfinjective.
We recall the notation as in Section \ref{sect:ex}: 
$\smod(\Lambda^{\env})$ is the category of finitely generated $\Lambda^{\env}$-modules (equivalently $\Lambda$-bimodules) and 
$\lrp(\Lambda^{\env})$ is  
the left-right projective category, that is the full subcategory of
$\smod(\Lambda^{\env})$ consisting of all $\Lambda$-bimodules $B$ such that
$_\Lambda B$ and $B_\Lambda$ are projective. 
We will work with the stable left-right projective category $\K = \ulrp(\Lambda^{\env})$
and $\A = \umod(\Lambda)$. Then $\K$ is a \mtc\ 
with tensor product $-\otimes_\Lambda -$ and shift functor
$\Sigma = \Omega^{-1}_{\Lambda^\env}(-)$, and $\A$ is a triangulated
category with shift functor $\Omega_\Lambda^{-1}(-)$. The category
$\K$ acts on $\A$ with action
$-*-$ given by $-\otimes_\Lambda -\colon \K\times\A \to \A$.

In this section we define duals in $\ulrp(\Lambda^{\env})$. While existence of duals for bimodules is known \cite[Exercise 2.10.16]{EGNO}, we include the details for the sake of completeness.
The next result is the first step in this direction. 
Let 
\[{\rdual (-)} = \Hom_\Lambda( {_\Lambda(-)}, {_\Lambda\Lambda})\colon \lrp(\Lambda^{\env}) \to \smod(\Lambda^{\env})\]
and
\[(-) \ldual = \Hom_\Lambda( {(-)_\Lambda}, {\Lambda_\Lambda})\colon \lrp(\Lambda^{\env}) \to \smod(\Lambda^{\env})\]
where the bimodule actions are given as follows. 
Let $B$ be in $\lrp(\Lambda^{\env})$.
We will show that $B\ldual = \Hom_\Lambda(B_\Lambda,
\Lambda_\Lambda)$ and $\rdual B = \Hom_\Lambda({_\Lambda
  B}, {_\Lambda \Lambda})$ are left and right dual objects of $B$,
respectively. Note that $B\ldual$ is a left
$\Lambda$-module via the action on the left of $\Lambda$, and it is a
right $\Lambda$-module via the action on the left of $B$. Hence
\[(\lambda\cdot f)(b) = \lambda f(b)\
\text{ and } \ 
(f\cdot \lambda)(b) = f(\lambda b)\]
for $f$ in $B\ldual$ and $\lambda$ in $\Lambda$. Note that $\rdual B$ is a left $\Lambda$-module via the
action on the right of $B$, and it is a right $\Lambda$-module via the
action on the right of $\Lambda$. Hence
\[(\lambda\cdot g)(b) = g(b\lambda) \ 
\text{ and } \
(g\cdot \lambda)(b) = g(b)\lambda\]
for $g$ in $\rdual B$ and $\lambda$ in $\Lambda$.

\begin{prop}\label{prop:dualityonlrp}
Let $\Lambda$ be a finite dimensional selfinjective algebra over a field $k$. 
There are isomorphisms of $\Lambda$-bimodules for all $P$ and $P'$ in $\lrp(\Lambda^{\env})$,
\[P\cong (\rdual P)\ldual\quad \textrm{and} \quad   P' \cong {\rdual(P' \ldual)} .\]

\end{prop}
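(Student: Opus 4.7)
The plan is to write down the familiar double-duality evaluation map in each case and to verify (a) that it is a morphism of bimodules, not just of one-sided modules, and (b) that it is an isomorphism. For the first isomorphism $P \cong (\rdual P)\ldual$ I would define
\[
\eta_P \colon P \lxr (\rdual P)\ldual, \qquad \eta_P(p)(f) = f(p),
\]
for $p \in P$ and $f \in \rdual P = \Hom_\Lambda({}_\Lambda P, {}_\Lambda \Lambda)$. Using the bimodule conventions recalled just above the statement (the left $\Lambda$-action on $\rdual P$ comes from the right $\Lambda$-action on $P$, and the right $\Lambda$-action on $(\rdual P)\ldual$ comes from the left $\Lambda$-action on $\rdual P$), a direct unwinding shows that $\eta_P(\lambda p \mu)(f) = f(\lambda p \mu)$ equals $(\lambda \cdot \eta_P(p) \cdot \mu)(f)$ for all $\lambda, \mu \in \Lambda$, so $\eta_P$ is $\Lambda$-bilinear.

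To see that $\eta_P$ is bijective, I would forget the right $\Lambda$-action and use only that ${}_\Lambda P$ is finitely generated projective. The standard double-duality argument then applies: the map is an isomorphism when $P = \Lambda$ because $\rdual \Lambda \cong \Lambda$ (canonically, via the action of $\Lambda$ on the right) and then $(\rdual \Lambda)\ldual \cong \Lambda$ again; it commutes with finite direct sums; and it transfers to direct summands. Since ${}_\Lambda P$ is finitely generated projective, it is a summand of a finitely generated free module, so $\eta_P$ is an isomorphism of left $\Lambda$-modules, hence an isomorphism of $\Lambda$-bimodules by the first step.

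The second isomorphism $P' \cong \rdual(P'\ldual)$ is handled by the completely symmetric construction: define $\eta'_{P'}(p)(g) = g(p)$ for $p \in P'$ and $g \in P'\ldual = \Hom_\Lambda(P'_\Lambda, \Lambda_\Lambda)$, and repeat the two steps, this time using that $P'_\Lambda$ is finitely generated projective. The only place any subtlety enters is keeping track of which side of $\Lambda$ acts on which argument of each Hom, and I would expect the bookkeeping of bimodule structures under the two applications of $\Hom_\Lambda(-,\Lambda)$ — in particular checking that the ``outer'' action indeed matches the original bimodule structure on $P$ — to be the only real point requiring care. Naturality of $\eta_P$ and $\eta'_{P'}$ in $P$ and $P'$ is automatic from the formulas.
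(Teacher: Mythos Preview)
Your proposal is correct and follows essentially the same approach as the paper: you define the same evaluation maps $\eta_P(p)(f)=f(p)$ (the paper calls them $\alpha_P$ and $\beta_{P'}$), invoke the classical one-sided double-duality isomorphism for finitely generated projectives, and then check the remaining $\Lambda$-linearity to upgrade to a bimodule isomorphism. The paper simply cites the one-sided isomorphism as classical rather than sketching the free/summand reduction, but otherwise the arguments coincide.
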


\begin{proof}
When applied to a finitely generated projective left $\Lambda$-module $P$, it is classical
that
$P \cong {(\rdual P)}\ldual$
as left $\Lambda$-modules, and similarly for a finitely generated projective right
$\Lambda$-module $P'$, there is an isomorphism 
$P'\cong {\rdual(P'\ldual)}$
of right $\Lambda$-modules.  There are natural maps
\[\alpha_P\colon P\to {(\rdual P)}\ldual  = \Hom_\Lambda(\Hom_\Lambda({_\Lambda
    P},{_\Lambda \Lambda}),\Lambda_\Lambda)\]
and
\[\beta_{P'}\colon P'\to {\rdual(P'\ldual)} =
  \Hom_\Lambda(\Hom_\Lambda({P'_\Lambda},\Lambda_\Lambda),{_\Lambda\Lambda}),\]
given by
\[\alpha_P(p)(f) = f(p)\]
for $p\in P$ and $f\in \Hom_\Lambda({_\Lambda P},{_\Lambda\Lambda})={\rdual P}$
and
\[\beta_{P'}(p')(f') = f'(p')\]
for $p'\in P'$ and $f'\in
\Hom_\Lambda({P'_\Lambda},{\Lambda_\Lambda})=P' \ldual$. These homomorphisms are
isomorphisms, respectively as left and right $\Lambda$-modules. To
complete the proof it remains to show that they are
homomorphisms of bimodules. For $p$ in $P$ and $f$ in $\rdual P$, 
\[
(\alpha_P(p)\lambda)(f)  = \alpha_P(\lambda\cdot f)(p)\notag
  = f(p\lambda)\notag
   =  \alpha_P(p\lambda)(f)\notag  .
\]
This shows that $\alpha_P$ is a homomorphism of right
$\Lambda$-modules, and therefore a homomorphism of
$\Lambda$-bimodules.

Similarly, for $p$ in $P$ and $g$ in $P\ldual$,
\[
(\lambda\beta_P(p))(g)  = \beta_P(g\cdot\lambda)(p)\notag
   = g(\lambda p)\notag
   =  \beta_P(\lambda p)(g)\notag  .
\]
This shows that $\beta_P$ is a homomorphism of left
$\Lambda$-modules, and therefore a homomorphism of
$\Lambda$-bimodules. This completes the proof of the proposition. 
\end{proof}

The next result is essentially \cite[Proposition II.4.4]{ARS} 
extended to bimodules. It is used in proving that the functors
$\rdual (-)$ and $(-) \ldual $ are endofunctors of $\lrp(\Lambda^{\env})$.

\begin{prop}\label{prop:tensorhomrelation}
Let $\Lambda$ be a finite dimensional selfinjective algebra over a
field $k$.
\begin{enumerate}[\qquad \rm(a)]
\item For any $B$ in $\lrp(\Lambda^{\env})$ and $C$ in $\smod(\Lambda)$, the
  homomorphism
\[\alpha'_{B,C}\colon {\rdual B} \otimes_\Lambda C \to \Hom_\Lambda({_\Lambda B},{_\Lambda
    C}),\]
given by
$ \alpha'_{B,C}(f\otimes c)(b) = c\cdot f(b)$ 
for all $f$ in $\rdual B$, $c$ in $C$ and $b$ in $B$, is an
isomorphism in $\lrp(\Lambda^{\env})$ which is natural in both variables.
\item For any $B$ and $C$ in $\lrp(\Lambda^{\env})$ the homomorphism
\[\alpha_{B,C}\colon C\otimes_\Lambda B\ldual  \to
  \Hom_\Lambda(B_\Lambda, C_\Lambda),\]
given by
$ \alpha_{B,C}(c\otimes f)(b) = f(b)\cdot c$ 
for all $c$ in $C$, $f$ in $B \ldual$ and $b$ in $B$, is an isomorphism in
$\lrp(\Lambda^{\env})$ which is natural in both variables. 
\end{enumerate}
\end{prop}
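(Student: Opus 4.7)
The plan is to verify both maps are well-defined bimodule homomorphisms natural in both variables, and then reduce the bijectivity to the classical tensor-hom adjunction for finitely generated projective one-sided modules.

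For part (a), I would first check that $\alpha'_{B,C}$ is well-defined. Using the bimodule structure on $\rdual B$ recalled in \S\ref{subsect:dual-selfinj}, the formula $(f,c) \mapsto (b \mapsto c\cdot f(b))$ is $k$-bilinear and $\Lambda$-balanced over the tensor product: the right $\Lambda$-action $(f\cdot\lambda)(b) = f(b)\lambda$ on $\rdual B$ matches the left $\Lambda$-action on $C$, so $\alpha'_{B,C}(f\cdot\lambda\otimes c)(b) = \alpha'_{B,C}(f\otimes\lambda c)(b)$. Moreover, $\alpha'_{B,C}(f\otimes c)$ lies in $\Hom_\Lambda({_\Lambda B}, {_\Lambda C})$ since $f$ is left $\Lambda$-linear. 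A direct computation, using the left action $(\lambda\cdot f)(b) = f(b\lambda)$ on $\rdual B$, shows $\alpha'_{B,C}$ is a bimodule homomorphism with respect to the natural bimodule structures on source and target. Naturality in both variables is immediate from the formula.

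The core isomorphism claim reduces to a purely one-sided statement. Both $\rdual(-)\otimes_\Lambda C$ and $\Hom_\Lambda({_\Lambda (-)},{_\Lambda C})$ are additive contravariant functors of ${_\Lambda B}$ from left $\Lambda$-modules to $k$-vector spaces, and $\alpha'_{-,C}$ is a natural transformation between them. For $B=\Lambda$ the canonical identifications $\rdual\Lambda \cong \Lambda$ (via $f\mapsto f(1)$), $\Lambda\otimes_\Lambda C \cong C$, and $\Hom_\Lambda({_\Lambda\Lambda},{_\Lambda C})\cong C$ identify $\alpha'_{\Lambda,C}$ with the identity of $C$. By additivity the map $\alpha'_{\Lambda^n,C}$ is an isomorphism for every $n$, and passing to direct summands shows $\alpha'_{B,C}$ is an isomorphism whenever ${_\Lambda B}$ is finitely generated projective. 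Since $B\in \lrp(\Lambda^{\env})$ has this property by hypothesis, part (a) is established.

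Part (b) is entirely symmetric: $\alpha_{B,C}$ is the analogous evaluation map built from $B\ldual = \Hom_\Lambda(B_\Lambda,\Lambda_\Lambda)$ with its bimodule structure, and the bijectivity reduces in the same way to the case $B=\Lambda$ and additivity in the right $\Lambda$-module ${B_\Lambda}$, which is finitely generated projective since $B\in\lrp(\Lambda^{\env})$. The main obstacle is purely bookkeeping, namely tracking how the left and right $\Lambda$-actions on $\rdual B$ and $B\ldual$ defined in \S\ref{subsect:dual-selfinj} interact with the bimodule structures on $\rdual B \otimes_\Lambda C$, $C\otimes_\Lambda B\ldual$, $\Hom_\Lambda({_\Lambda B},{_\Lambda C})$, and $\Hom_\Lambda(B_\Lambda,C_\Lambda)$; no new conceptual idea beyond the classical dual-basis adjunction is required.
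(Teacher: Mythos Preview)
Your argument is correct and is precisely the classical dual-basis argument underlying \cite[Proposition II.4.4]{ARS}, which the paper cites in lieu of a proof; the paper itself gives no further details, so your approach is essentially the same as the paper's (implicit) one. One minor remark: in part (a) the module $C$ lies only in $\smod(\Lambda)$, so the source and target carry only a left $\Lambda$-module structure rather than a full bimodule structure, and the phrase ``isomorphism in $\lrp(\Lambda^{\env})$'' in the statement is a slight imprecision that you inherit when you speak of a ``bimodule homomorphism'' there; your verification should be read as a left $\Lambda$-module isomorphism in that case, which is all that is needed for the subsequent applications.
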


Now we are ready to prove the main result in this section,
that $\rdual (-)$ and $(-) \ldual$ are endofunctors on $\lrp(\Lambda^{\env})$
and induce endofunctors on the stable category $\ulrp(\Lambda^{\env})$.

\begin{prop}\label{prop:endo-functors}
Let $\Lambda$ be a finite dimensional selfinjective algebra over a field $k$. 
\begin{enumerate}[\qquad \rm(a)]
\item If $B$ is in $\lrp(\Lambda^{\env})$, then $\rdual B$ and $B\ldual$ are in
  $\lrp(\Lambda^{\env})$. 
\item If $B$ is a projective object in $\lrp(\Lambda^{\env})$, then $\rdual P$ and
$P\ldual$ are both projective as $\Lambda^\env$-modules.
\item The functors ${\rdual (-)}$ and $(-)\ldual $ induce triangulated endofunctors on
  $\ulrp(\Lambda^{\env})$.
\item The functors $\rdual (-)$ and $(-)\ldual $ induce endofunctors on
  $\mathcal{E}=\Thick(\Lambda) \subseteq \lrp(\Lambda^{\env})$.
\item The functors $\rdual (-)$ and $(-)\ldual $ induce triangulated endofunctors on
  $\underline{\mathcal{E}} \subseteq \ulrp(\Lambda^{\env})$.
\end{enumerate}
\end{prop}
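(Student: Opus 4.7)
The plan is to prove the five parts in order, with (a) being the main step and the rest following by standard manipulations. For (a), right-projectivity of $\rdual B$ is routine: since ${}_\Lambda B$ is projective, it is a direct summand of $\Lambda^n$ as a left module, and applying $\Hom_\Lambda({}_\Lambda -, {}_\Lambda \Lambda_\Lambda)$ displays $\rdual B$ as a direct summand of $\Lambda^n$ as a right $\Lambda$-module. For left-projectivity of $\rdual B$, I would invoke the tensor--hom adjunction: the endofunctor $B \otimes_\Lambda -$ of $\smod(\Lambda)$ is left adjoint to $\Hom_\Lambda({}_\Lambda B, -)$, and by Proposition~\ref{prop:tensorhomrelation}(a) the latter is naturally isomorphic to $\rdual B \otimes_\Lambda -$. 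Because $B_\Lambda$ is projective, hence flat, the functor $B \otimes_\Lambda -$ is exact, so its right adjoint preserves injectives. By selfinjectivity of $\Lambda$, finitely generated injective and finitely generated projective left $\Lambda$-modules coincide, so the right adjoint preserves projectives as well; evaluating at $\Lambda$ gives that $\rdual B$ is projective as a left $\Lambda$-module. The symmetric argument using Proposition~\ref{prop:tensorhomrelation}(b) handles $B\ldual$.

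For (b), I would first identify the projective objects of the exact category $\lrp(\Lambda^{\env})$ with the genuinely projective $\Lambda^{\env}$-modules: given a projective cover $P \twoheadrightarrow B$ in $\smod(\Lambda^{\env})$ with $B \in \lrp(\Lambda^{\env})$, the sequence splits on each side since $B$ is left- and right-projective, forcing the kernel into $\lrp(\Lambda^{\env})$; if $B$ is projective in $\lrp(\Lambda^{\env})$, this conflation splits in $\lrp(\Lambda^{\env})$ and $B$ is a summand of $P$. It therefore suffices to verify the claim for $B = \Lambda \otimes_k \Lambda$, for which a direct computation gives a bimodule isomorphism $\rdual(\Lambda \otimes_k \Lambda) \cong \Hom_k(\Lambda_\Lambda, k) \otimes_k \Lambda$. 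By selfinjectivity, $\Hom_k(\Lambda_\Lambda, k)$ is a projective left $\Lambda$-module, so the right-hand side decomposes into indecomposable projective bimodules of the form $\Lambda e_i \otimes_k e_j \Lambda$.

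For (c), the functor $\rdual(-)$ is additive, contravariant, and exact on $\lrp(\Lambda^{\env})$ (being $\Hom_\Lambda({}_\Lambda -, \Lambda)$ applied to left-projective modules). By (a) it takes values in $\lrp(\Lambda^{\env})$, and by (b) it preserves the subcategory of projective bimodules, hence it descends to $\ulrp(\Lambda^{\env})$. Applying $\rdual(-)$ to a syzygy sequence $0 \to \Omega_{\Lambda^{\env}} B \to Q \to B \to 0$ with $Q$ a projective bimodule (which still lies in $\lrp(\Lambda^{\env})$ by a side-splitting argument) yields a short exact sequence $0 \to \rdual B \to \rdual Q \to \rdual \Omega_{\Lambda^{\env}} B \to 0$ with $\rdual Q$ projective, identifying $\Sigma\, \rdual B \cong \rdual \Omega_{\Lambda^{\env}} B$ in $\ulrp(\Lambda^{\env})$; this gives the expected compatibility $\rdual \circ \Sigma \cong \Sigma^{-1} \circ \rdual$ of a contravariant triangulated functor, and the image of every distinguished triangle is again distinguished.

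For (d) and (e), I would consider the full subcategory $\mathcal{F} \subseteq \ulrp(\Lambda^{\env})$ consisting of objects $X$ with both $\rdual X$ and $X\ldual$ in $\underline{\mathcal{E}}$. By the triangulated functoriality from (c) and thickness of $\underline{\mathcal{E}}$, $\mathcal{F}$ is thick, and the immediate isomorphisms $\rdual \Lambda \cong \Lambda \cong \Lambda\ldual$ place the tensor unit $\Lambda$ in $\mathcal{F}$, so $\underline{\mathcal{E}} = \Thick(\Lambda) \subseteq \mathcal{F}$; this gives (e), and lifting to $\lrp(\Lambda^{\env})$ and using that projective bimodules are sent to projective bimodules by (b) gives (d). The main obstacle is the left-projectivity of $\rdual B$ in (a): selfinjectivity enters essentially only here, and the adjunction argument bypasses a more painstaking analysis through primitive idempotents and the Nakayama permutation.
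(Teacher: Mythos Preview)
Your proof is correct and follows the same strategy as the paper's: tensor--Hom identifications together with selfinjectivity for (a), a direct computation on projective bimodules for (b), and a thick-subcategory argument for (d)--(e). The minor reorganizations---your direct-summand argument for one half of (a) (which avoids selfinjectivity there), your ``right adjoint of exact preserves injectives'' packaging for the other half, and your explicit identification of projectives in $\lrp(\Lambda^{\env})$ with projective $\Lambda^{\env}$-modules in (b), which the paper computes directly on $\Lambda e\otimes_k f\Lambda$---are cosmetic.
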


\begin{proof}
  (a) By Proposition \ref{prop:tensorhomrelation} (b),
  $-\otimes_\Lambda B\ldual \cong \Hom_\Lambda(B_\Lambda,-).$
The latter functor is exact, since $B_\Lambda$ is a projective right 
$\Lambda$-module. This proves that $_\Lambda{(\rdual B)}$ is a projective
left $\Lambda$-module.

By $\Hom$-$\otimes$-adjunction, 
\[\Hom_\Lambda(-,(B\ldual) _{\Lambda}) \cong \Hom_\Lambda(-\otimes_\Lambda
  B,\Lambda_\Lambda) \cong \Hom_\Lambda(-,\Lambda_\Lambda)\comp
  (-\otimes_\Lambda B),\]
where the composition consists of two exact functors since $\Lambda$ is
selfinjective and $B$ is in $\lrp(\Lambda^{\env})$. Hence $(B\ldual) _{\Lambda}$ is an
injective $\Lambda$-module and therefore a projective
$\Lambda$-module. Similarly one can prove that $\rdual B$ is in
$\lrp(\Lambda^{\env})$. 

(b) Note the following isomorphisms
of $\Lambda$-bimodules for idempotents $e$ and $f$ in $\Lambda$: 
\begin{align}
  \Hom_\Lambda({_\Lambda\Lambda e\otimes_k f\Lambda},
  {_\Lambda\Lambda}) & \cong \Hom_k(f\Lambda, \Hom_\Lambda(
                       {_\Lambda \Lambda e}, {_\Lambda
                       \Lambda}))\notag\\
                     & \cong \Hom_k(f\Lambda, e\Lambda)\notag\\
                     & \cong D(f\Lambda)\otimes_k e\Lambda\notag
\end{align}
where $D$ denotes the vector space dual.
Since $\Lambda$ is selfinjective, the last bimodule is a projective
$\Lambda^\env$-module. Similarly we obtain an isomorphism of
$\Lambda$-bimodules for idempotents $e$ and $f$ in $\Lambda$:
\[\Hom_\Lambda( {_\Lambda\Lambda e}\otimes_k f\Lambda_\Lambda,
  \Lambda_\Lambda) \cong \Lambda f\otimes_k D(\Lambda e).\]
Again, since $\Lambda$ is selfinjective, the last module is a
projective $\Lambda^\env$-module, and this completes the proof of part
(b).

(c) This follows directly from (b).

(d) Let $B$ be in $\mathcal{E}= \Thick(\Lambda)$. We want to prove that $\rdual B$ and $B\ldual $
again are in $\mathcal{E}$. Note that 
\[\rdual \Lambda \cong \Lambda \cong \Lambda \ldual\]
and
\[\rdual (\Omega^i_{\Lambda^\env}(\Lambda)) \cong
  \Omega^{-i}_{\Lambda^\env}(\Lambda) \cong
  (\Omega^i_{\Lambda^\env}(\Lambda))\ldual \]
for all $i\neq 0$. 
Since all objects in $\mathcal{E}$ are built from the shifts
$\Omega^i_{\Lambda^\env}(\Lambda)$ $(i\in\mathbb{Z})$ through direct
sums, direct summands, and cones, it follows that $\rdual B$ and $B\ldual $
again are in $\mathcal{E}$. 

(e) This follows directly from (c) and (d). 
\end{proof}

\subsection{Left and right duals in $\ulrp (\Lambda^{\env})$}  

This section is devoted to showing that $\ulrp (\Lambda^{\env})$ and the
subcategory $\mathcal{E} = \Thick(\Lambda)$ are rigid triangulated categories. 
Define
\[ 
    \ev_B\colon B\ldual\otimes_\Lambda B\to \Lambda \ \text{ and } \ 
    \ev'_B\colon B\otimes_\Lambda {\rdual B}\to \Lambda
\]
by $\ev_B(f\otimes b) = f(b)$ for all $f\in
B\ldual =\Hom_\Lambda(B,\Lambda)$ and $b\in B$ and 
$\ev_B(b\otimes g) = g(b)$ for all $b\in B$ and $g\in
{\rdual B}=\Hom_\Lambda(B,\Lambda)$. 
Note that 
\[\ev_B(\lambda (f\otimes b)) = \ev_B((\lambda f)\otimes b) = (\lambda
  f)(b) = \lambda f(b) = \lambda \ev_B(f\otimes b)\]
and
\[\ev_B((f\otimes b)\lambda) = \ev_B(f\otimes b\lambda) = f(b\lambda)
  = f(b)\lambda = \ev_B(f\otimes b)\lambda\]
for all $\lambda$ in $\Lambda$, $f$ in $B\ldual $ and $b$ in $B$, and that
\[\ev'_B(\lambda (b\otimes g)  = \ev'_B(\lambda b\otimes g) =
  g(\lambda b) = \lambda g(b) = \lambda \ev'_B(b\otimes g)\]
and
\[\ev'_B((b\otimes g)\lambda) = \ev'_b(b\otimes g\cdot \lambda) =
  g(b)\lambda = \ev'_B(b\otimes g) \lambda\]
for all $b$ in $B$, $g$ in $\rdual B$ and $\lambda$ in $\Lambda$. This shows that $\ev_B$
and $\ev'_B$ are homomorphisms of $\Lambda$-bimodules. 

Recall the isomorphism from Proposition~\ref{prop:tensorhomrelation}(b),
\[\alpha_{B,C}\colon C\otimes_\Lambda B\ldual \to \Hom_\Lambda(
  B_\Lambda, C_\Lambda),\]
for $B$ and $C$ in $\lrp(\Lambda^{\env})$.
Now we take $C=B$ and $b_i\otimes f_i$ in
$B\otimes_\Lambda B\ldual $ for $i=1,2,\ldots,v$ such that
\[\sum_{i=1}^v f_i(-)b_i\colon B\to B\]
is the identity homomorphism $1_B$ in $\Hom_\Lambda(
B_\Lambda, B_\Lambda)$. A homomorphism
\[\Lambda\to \Hom_\Lambda(B_\Lambda, B_\Lambda)\cong
  B\otimes_\Lambda B\ldual \]
  is uniquely determined by its value on $1$ in $\Lambda$. 
Define a $\Lambda$-bimodule homomorphism $\Lambda\to \Hom_\Lambda(B_\Lambda, B_\Lambda)$
by taking $\lambda$ in $\Lambda$ to the right $\Lambda$-module homomorphism
  \[ \lambda\cdot - \colon B_\Lambda\to B_\Lambda . \]
Under the isomorphism of 
  $B\otimes_\Lambda B\ldual $ with $\Hom_\Lambda(
  B_\Lambda, B_\Lambda)$, the element $\sum_{i=1}^v b_i\otimes f_i$
  corresponds to the identity map on $B$. Hence the map  
  $ \coev_B\colon \Lambda \to B\otimes_\Lambda B\ldual $
  given by
  \[\coev_B(1) = \sum_{i=1}^v b_i\otimes f_i\]
  is a homomorphism of $\Lambda$-bimodules. 

Similar to the above, we consider the isomorphism
\[\alpha'_{B,C}\colon {\rdual B}\otimes_\Lambda C \to \Hom_\Lambda(
  {_\Lambda B}, {_\Lambda C})\]
for $B$ and $C$ in $\lrp(\Lambda^{\env})$ given by $f\otimes c \mapsto
\alpha'_{B,C}(f\otimes c)(-) = c\cdot f(-)$. 
Let $C=B$ and $f'_i\otimes b'_i$ in
$\rdual B\otimes_\Lambda B$ for $i=1,2,\ldots,u$ be such that
\[\sum_{i=1}^u b'_if'_i(-)\colon B\to B\]
is the identity homomorphism $1_B$ in $\Hom_\Lambda(
{_\Lambda B}, {_\Lambda B})$. A homomorphism
\[\Lambda\to \Hom_\Lambda({_\Lambda B}, {_\Lambda B})\cong
  {\rdual B}\otimes_\Lambda B\]
  is uniquely determined by its value on $1$ in $\Lambda$. 
Define a $\Lambda$-bimodule homomorphism $\Lambda\to \Hom_\Lambda(
  {_\Lambda B}, {_\Lambda B})$ by taking $\lambda$ in $\Lambda$
to the left $\Lambda$-module homomorphism 
  \[ -\cdot \lambda \colon {_\Lambda B}\to {_\Lambda B} . \]
Under the isomorphism of
  $\rdual B\otimes_\Lambda B$ with $\Hom_\Lambda(
  {_\Lambda B}, {_\Lambda B})$, the element $\sum_{i=1}^u f'_i\otimes
  b'_i$ corresponds to the identity on $B$. Hence the map 
  $\coev'_B\colon \Lambda \to {\rdual B}\otimes_\Lambda B$
  determined by 
  \[\coev'_B(1) = \sum_{i=1}^u f'_i\otimes b'_i\]
  is a homomorphism of $\Lambda$-bimodules. Therefore both
  $\coev_B$ and $\coev'_B$ are homomorphisms of $\Lambda$-bimodules
  for all $B$ in $\lrp(\Lambda^{\env})$. 

We are now ready to prove the main result in this section.

\begin{prop}
  Let $\Lambda$ be a finite dimensional selfinjective algebra. Then
  the categories $\ulrp(\Lambda^{\env})$ and $\mathcal{E} = \Thick(\Lambda)
  \subseteq \ulrp(\Lambda^{\env})$ are rigid monoidal triangulated
  categories.
\end{prop}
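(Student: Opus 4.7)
The plan is to verify the two triangle (snake) identities for the duality data $(\ev_B, \coev_B)$ and $(\ev'_B, \coev'_B)$ already constructed above, working first at the level of $\lrp(\Lambda^{\env})$ and then passing to the stable quotient; the statement for $\underline{\mathcal{E}} = \Thick(\Lambda)$ will then follow at once from the fact that the dual functors preserve $\mathcal{E}$, by Proposition~\ref{prop:endo-functors}(d),(e).

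Fix $B \in \lrp(\Lambda^{\env})$ and retain the notation of this subsection: $\coev_B(1) = \sum_{i=1}^v b_i \otimes f_i$, where $\{b_i, f_i\}$ is the dual basis datum in $B \otimes_\Lambda B\ldual \cong \Hom_\Lambda(B_\Lambda, B_\Lambda)$ of Proposition~\ref{prop:tensorhomrelation}(b) that corresponds to $1_B$, and $\coev'_B(1) = \sum_{j=1}^u f'_j \otimes b'_j$, the analogous datum in $\rdual B \otimes_\Lambda B \cong \Hom_\Lambda({}_\Lambda B, {}_\Lambda B)$ of Proposition~\ref{prop:tensorhomrelation}(a). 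I would check the first snake identity for the left dual by tracing $b \in B$ through the composition
\[
B \xrightarrow{\coev_B \otimes 1_B} B \otimes_\Lambda B\ldual \otimes_\Lambda B \xrightarrow{1_B \otimes \ev_B} B,
\]
obtaining $b \mapsto \sum_i b_i \otimes f_i \otimes b \mapsto \sum_i b_i \cdot f_i(b) = b$ by the dual basis property. The second snake identity applied to $f \in B\ldual$ produces $\sum_i f(b_i)\, f_i$, whose value at any $b \in B$ is $f\bigl(\sum_i b_i f_i(b)\bigr) = f(b)$, hence equals $f$ in $B\ldual$. The symmetric calculations using $\coev'_B$, $\ev'_B$, and $\{f'_j, b'_j\}$ show that $\rdual B$ is a right dual of $B$.

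Since the triangle identities hold on the nose in $\lrp(\Lambda^{\env})$ and the projection $\lrp(\Lambda^{\env}) \to \ulrp(\Lambda^{\env})$ is monoidal, the same morphisms exhibit $B\ldual$ and $\rdual B$ as left and right duals of $B$ in $\ulrp(\Lambda^{\env})$, proving rigidity. For $\underline{\mathcal{E}}$, Proposition~\ref{prop:endo-functors}(d),(e) ensures $B\ldual, \rdual B \in \underline{\mathcal{E}}$ whenever $B \in \underline{\mathcal{E}}$, so the evaluation and coevaluation morphisms built from $\Lambda$-bimodule data lie inside $\underline{\mathcal{E}}$ and exhibit rigidity of this subcategory as well.

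The argument is essentially a bookkeeping exercise. The only point that is not automatic, namely that $\coev_B$ and $\coev'_B$ are morphisms of $\Lambda$-bimodules rather than merely of one-sided modules, has already been handled above by using the bimodule isomorphisms of Proposition~\ref{prop:tensorhomrelation}; beyond that I do not anticipate any substantive obstacle.
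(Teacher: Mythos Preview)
Your proposal is correct and follows essentially the same approach as the paper: both verify the two snake identities for $(\ev_B,\coev_B)$ and $(\ev'_B,\coev'_B)$ in $\lrp(\Lambda^{\env})$ by direct calculation with the dual-basis data $\sum b_i\otimes f_i$ and $\sum f'_j\otimes b'_j$, then invoke Proposition~\ref{prop:endo-functors} to descend to $\ulrp(\Lambda^{\env})$ and to $\underline{\mathcal{E}}$. The paper writes out the right-dual computations explicitly rather than appealing to symmetry, but the content is the same.
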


\begin{proof}
  Let $B$ be in $\lrp(\Lambda^{\env})$. From the above discussion, for $b$ in
  $B$, 
\begin{align}
b \extto{\coev_B\otimes 1_B} & \coev_B(1) \otimes b = \left (\sum_{i=1}^v
                               b_i\otimes f_i \right) \otimes b\notag\\
  \extto{\mathfrak{a}} & \sum_{i=1}^v b_i\otimes (f_i\otimes
                         b)\notag\\
  \extto{1_B\otimes\ev_B} & \sum_{i=1}^v b_if_i(b) = b\notag  ,
\end{align}
and for $g$ in $B\ldual$,
\begin{align}
g \extto{1_{B\ldual }\otimes \coev_B} & g\otimes \coev_B(1) = g\otimes
                                   \left (\sum_{i=1}^vb_i\otimes f_i \right)\notag\\
  \extto{\mathfrak{a}^{-1}} & \sum_{i=1}^v (g\otimes b_i)\otimes
                              f_i\notag\\
  \extto{\ev_B\otimes 1_{B\ldual }} & \sum_{i=1}^v g(b_i)\cdot f_i\notag  ,
\end{align}
where for $b$ in $B$,
\[\left (\sum_{i=1}^v g(b_i)\cdot f_i\right )(b) = \sum_{i=1}^v g(b_i)\cdot f_i(b)
  = \sum_{i=1}^v g(b_if_i(b)) = g\left (\sum_{i=1} b_if_i(b)\right) = g(b).\]
Hence both the compositions of morphisms are the identity morphisms
and $B\ldual $ is a left dual of $B$. 

For $b$ in $B$,
\begin{align}
  b  \extto{1_B\otimes \ev'_B} & b\otimes \coev'_B(1) = b\otimes
      \sum_{i=1}^u f'_i\otimes b'_i\notag\\
   \extto{\mathfrak{a}^{-1}} &\sum_{i=1}^u (b\otimes f'_i)\otimes
    b'_i\notag\\
     \extto{\coev'_B\otimes 1_B} &\sum_{i=1}^u f'_i(b)b_i = b\notag  ,
\end{align}
and for $g$ in $\rdual B$,
\begin{align}
  g \extto{\coev'_B\otimes 1_{\rdual B}} & \sum_{i=1}^u (f'_i\otimes
                                      b'_i)\otimes g\notag\\
  \extto{\mathfrak{a}} & \sum_{i=1}^u f'_i\otimes (b'_i\otimes
                         g)\notag\\
  \extto{1_{\rdual B}\otimes \ev'_B} & \sum_{i=1}^u f'_i\cdot g(b'_i)\notag  ,
\end{align}  
where for $b$ in $B$, 
\[\left (\sum_{i=1}^u f'_i\cdot g(b'_i)\right)(b) = \sum_{i=1}^u f'_i(b)\cdot
  g(b'_i) = \sum_{i=1}^u g(f'_i(b)b'_i) = g\left (\sum_{i=1}^u f'_i(b)b'_i\right)
  = g(b).\]
Hence both the compositions of morphisms are the identity morphisms
and $\rdual B$ is a right dual of $B$. The result then follows from the
above discussion and Proposition \ref{prop:endo-functors}. 
\end{proof}

\subsection{A functor from $\lrp(\Lambda^{\env})$ to $\smod(\Lambda)$}
\label{subsect:functorG}
Here we return to the problem of finding correspondences among the
four objects introduced in Section \ref{subsec:correspondences}. 
To that end, we define here a functor from $\ulrp(\Lambda^{\env})$ to $\umod(\Lambda)$ which is
conservative (that is, it takes nonzero objects to nonzero objects). 

First we recall some general $\Hom$-$\otimes$-isomorphisms (see \cite[Theorem 2.75]{Rotman}).
We provide a proof for completeness. 
 
\begin{thm}\label{thm:homtensor2theorem} 
Let $A = {_\Lambda A}_\Gamma$, $B = {_\Gamma B}_\Delta $ and $C = {_\Lambda
  C}_\Delta$ be bimodules for three $k$-algebras $\Lambda$, $\Gamma$ and
$\Delta$ over a field $k$.
\begin{enumerate}[\qquad \rm(a)]
\item The usual $\Hom$-$\otimes$-adjunction homomorphism
  \[\tau_{A,B,C}\colon \Hom_{\Lambda\otimes_k \Delta^{\opp}}(A\otimes_\Gamma B, C) \cong
    \Hom_{\Lambda\otimes_k \Gamma^{\opp}}(A, \Hom_{\Delta^{\opp}}(B,C))\]
given by $\tau_{A,B,C}(f)(a)(b) = f(a\otimes b)$ for $a\in A$ and $b\in B$ 
  is a functorial isomorphism in all variables.
\item If $A_\Gamma$ is a projective $\Gamma$-module and $B_\Delta$ is
  a projective $\Delta$-module, then the
  $\Hom$-$\otimes$-adjunction homomorphism induces an isomorphism
 \[\Ext^i_{\Lambda\otimes_k \Delta^{\opp}}(A\otimes_\Gamma B, C) \cong
    \Ext^i_{\Lambda\otimes_k \Gamma^{\opp}}(A, \Hom_{\Delta^{\opp}}(B,C))\]
for all $i\geq 0$ that is functorial in all variables.
\end{enumerate}
\end{thm}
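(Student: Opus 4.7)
\medskip

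\noindent\textbf{Proof proposal.} For part (a), the plan is to construct $\tau_{A,B,C}$ and its inverse at the level of abelian group homomorphisms and then verify that they respect all the bimodule structures. First I would check that, for a fixed $\Lambda\otimes_k\Delta^{\opp}$-linear map $f\colon A\otimes_\Gamma B\to C$, the assignment $a\mapsto f(a\otimes -)$ lands in $\Hom_{\Delta^{\opp}}(B,C)$, is additive in $a$, and is $\Gamma$-balanced, so it descends through the tensor product in the usual way. I would then check the $(\Lambda,\Gamma^{\opp})$-bimodule structure on $\Hom_{\Delta^{\opp}}(B,C)$ (given by $(\lambda\cdot g\cdot \gamma)(b)=\lambda g(\gamma b)$) is respected, by direct computation:
\[
\tau(f)(\lambda a\gamma)(b) = f(\lambda a\gamma\otimes b) = \lambda f(a\otimes \gamma b) = (\lambda\cdot \tau(f)(a)\cdot \gamma)(b).
\]
The inverse sends $g\colon A\to \Hom_{\Delta^{\opp}}(B,C)$ to the map $a\otimes b\mapsto g(a)(b)$; the $\Gamma$-balance of $g$ makes this well defined on $A\otimes_\Gamma B$, and the $(\Lambda,\Delta^{\opp})$-bilinearity is a direct check. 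Functoriality in each of $A$, $B$, $C$ reduces to unwinding the formula for $\tau$.

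For part (b), the strategy is the standard one: reduce the Ext isomorphism to the Hom isomorphism of part (a) by resolving $A$. Take a projective resolution $P_\bullet\to A$ in the category of $\Lambda\otimes_k\Gamma^{\opp}$-modules. The first key observation is that each $P_i$, being a summand of a free $\Lambda\otimes_k\Gamma^{\opp}$-module, is in particular projective (hence flat) as a right $\Gamma$-module, since $\Lambda\otimes_k\Gamma^{\opp}$ is free as a right $\Gamma$-module on any $k$-basis of $\Lambda$. Therefore $-\otimes_\Gamma B$ preserves exactness of $P_\bullet\to A\to 0$ (alternatively, using flatness of $A_\Gamma$ to conclude $\Tor_i^\Gamma(A,B)=0$), so $P_\bullet\otimes_\Gamma B\to A\otimes_\Gamma B\to 0$ is a resolution.

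Next I would verify that each $P_i\otimes_\Gamma B$ is projective as a $\Lambda\otimes_k\Delta^{\opp}$-module. Writing $P_i$ as a summand of $(\Lambda\otimes_k\Gamma^{\opp})^{(I)}$, one gets $P_i\otimes_\Gamma B$ as a summand of $(\Lambda\otimes_k B)^{(I)}$, and using that $B_\Delta$ is projective to write $B$ as a summand of $\Delta^{(J)}$, one sees $\Lambda\otimes_k B$ is a summand of the free $\Lambda\otimes_k\Delta^{\opp}$-module $(\Lambda\otimes_k\Delta)^{(J)}$. Hence $P_\bullet\otimes_\Gamma B$ is a projective resolution of $A\otimes_\Gamma B$ over $\Lambda\otimes_k\Delta^{\opp}$. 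Applying $\Hom_{\Lambda\otimes_k\Delta^{\opp}}(-,C)$ and using the natural isomorphism of part (a) termwise gives an isomorphism of cochain complexes
\[
\Hom_{\Lambda\otimes_k\Delta^{\opp}}(P_\bullet\otimes_\Gamma B,\,C)\;\cong\;\Hom_{\Lambda\otimes_k\Gamma^{\opp}}(P_\bullet,\,\Hom_{\Delta^{\opp}}(B,C)),
\]
whose cohomology computes the left and right sides of the claimed Ext isomorphism, respectively. Naturality in all three variables is inherited from part (a) and from functoriality of the resolution construction.

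The main obstacle I anticipate is purely bookkeeping: tracking the four different bimodule actions simultaneously through the adjunction, and making sure the sidedness conventions (in particular that $\Hom_{\Delta^{\opp}}(B,C)$ carries the correct $(\Lambda,\Gamma^{\opp})$-structure and that $\Lambda\otimes_k\Gamma^{\opp}$ is free on the \emph{right} over $\Gamma$) match the statement. No genuinely new homological input is needed beyond standard properties of tensor products of projectives and the two-sided Hom--tensor adjunction.
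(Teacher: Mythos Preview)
Your proposal is correct and follows essentially the same route as the paper: for (a) both reduce to checking that the standard adjunction map respects the extra bimodule structures, and for (b) both take a projective resolution $P_\bullet\to A$ over $\Lambda\otimes_k\Gamma^{\opp}$, argue that $P_\bullet\otimes_\Gamma B$ is a projective resolution of $A\otimes_\Gamma B$ over $\Lambda\otimes_k\Delta^{\opp}$, and then apply (a) termwise.

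One small point on the exactness step in (b): your sentence ``each $P_i$ is projective (hence flat) as a right $\Gamma$-module, therefore $-\otimes_\Gamma B$ preserves exactness of $P_\bullet\to A\to 0$'' is not quite a valid inference as written, since flatness of the \emph{terms} $P_i$ does not by itself make $-\otimes_\Gamma B$ exact for arbitrary $B$. Your parenthetical Tor argument is correct and fixes this. The paper instead observes directly that, because $A_\Gamma$ is projective and each $P_i$ is projective as a right $\Gamma$-module, the resolution is split exact as a complex of right $\Gamma$-modules, so tensoring with anything preserves exactness. This is really the same content packaged slightly differently, and both versions lead to the same conclusion.
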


\begin{proof}
(a) The proof is the same as for \cite[Theorem 2.75]{Rotman}, just
observing that all maps involved have the right structural
properties. 

(b) Assume that $A_\Gamma$ is a projective $\Gamma$-module. Let
$\mathbb{P}(A)$ be a projective resolution of $A$ as a
$\Lambda\otimes_k\Gamma^{\opp}$-module. Since $A_\Gamma$ is a projective
$\Gamma$-module, the projective resolution $\mathbb{P}(A)$ is a split
exact long exact sequence as right $\Gamma$-modules. Hence tensoring
with $B$, it remains exact. 
Since $B_\Delta$ is a projective $\Delta$-module, 
a projective 
$\Lambda\otimes_k\Gamma^{\opp}$-module $P$ tensored with $B$ is a projective
$\Lambda\otimes \Delta^{\opp}$-module, and 
we infer that $\mathbb{P}(A)\otimes_\Gamma B$ is a 
$\Lambda\otimes \Delta^{\opp}$-projective resolution of $A\otimes_\Gamma B$. Then (a)
implies an isomorphism of complexes
\[\Hom_{\Lambda\otimes_k\Delta^{\opp}}(\mathbb{P}(A)\otimes_\Gamma B, C)
  \cong
  \Hom_{\Lambda\otimes_k \Gamma^{\opp}}(\mathbb{P}(A), \Hom_{\Delta^{\opp}}(B,C)) . \]
The claim follows directly from this. 
\end{proof}

Let $\rad(\Lambda)$ denote the Jacobson radical of $\Lambda$, let $\Mod(\Lambda^{\env})$ denote the category of all (including infinite dimensional) modules for $\Lambda^{\env}$, and 
$\lrP(\Lambda^{\env})$ denote the full subcategory of $\Mod(\Lambda^\env)$
consisting of all bimodules projective as one-sided $\Lambda$-modules
on each side. 
Tensoring with $\Lambda/\rad(\Lambda)$ provides a functor 
\[-\otimes_\Lambda \Lambda/\rad(\Lambda)\colon
  \lrP(\Lambda^{\env})\to\Mod (\Lambda^\env) \]
that we study next. 

\begin{prop}
\label{prop:findimfunctor}
Let $\Lambda$ be a finite dimensional selfinjective algebra over a
field $k$, such that $\Lambda/\rad(\Lambda)$ is a separable algebra
over $k$. Consider the functor  
\[-\otimes_\Lambda \Lambda/\rad(\Lambda) \colon \lrP(\Lambda^{\env}) \to
  \Mod(\Lambda).\] 
Let $B$ be a module in $\lrP(\Lambda^{\env})$.
\begin{enumerate}[\qquad \rm(a)]
\item If $B\otimes_\Lambda \Lambda/\rad(\Lambda)$ is projective, then
  $B$ is a projective $\Lambda^\env$-module.
\item If $B$ is nonzero in $\ulrP(\Lambda^{\env})$, then
  $B\otimes_\Lambda\Lambda/\rad(\Lambda)$ is nonzero in
  $\uMod(\Lambda)$; equivalently, the functor
  \[-\otimes_\Lambda \Lambda/\rad(\Lambda) \colon \ulrP(\Lambda^{\env})\to
    \uMod(\Lambda)\]
  is conservative.
\end{enumerate}
\end{prop}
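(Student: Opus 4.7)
The plan is to prove (a) via a projective-cover argument and to obtain (b) immediately as its contrapositive, using that $B$ is zero in $\ulrP(\Lambda^{\env})$ precisely when $B$ is $\Lambda^{\env}$-projective, and $B\otimes_\Lambda\Lambda/\rad(\Lambda)$ is zero in $\uMod(\Lambda)$ precisely when it is projective as a left $\Lambda$-module.

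For (a), since $\Lambda^{\env}$ is Artinian and hence perfect, $B$ has a projective cover $\pi\colon P\twoheadrightarrow B$ in $\Mod(\Lambda^{\env})$ with $\ker\pi\subseteq\rad(\Lambda^{\env})\cdot P$. The kernel $K:=\ker\pi$ still lies in $\lrP(\Lambda^{\env})$, and the short exact sequence $0\to K\to P\to B\to 0$ splits over the right $\Lambda$-action since $B_\Lambda$ is projective. Applying $-\otimes_\Lambda\Lambda/\rad(\Lambda)$ therefore gives an exact sequence of left $\Lambda$-modules whose middle term is projective, because $(\Lambda\otimes_k\Lambda)\otimes_\Lambda\Lambda/\rad(\Lambda)\cong\Lambda\otimes_k(\Lambda/\rad(\Lambda))$ is a direct sum of copies of $\Lambda$, and whose right-hand term is projective by hypothesis; the sequence then splits over $\Lambda$ and $K\otimes_\Lambda\Lambda/\rad(\Lambda)$ becomes a projective summand of $P\otimes_\Lambda\Lambda/\rad(\Lambda)$.

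The decisive input is separability of $\Lambda/\rad(\Lambda)$, which makes $(\Lambda/\rad(\Lambda))^{\env}$ semisimple and forces $\rad(\Lambda^{\env})=\rad(\Lambda)\otimes_k\Lambda+\Lambda\otimes_k\rad(\Lambda)$. Consequently $\ker\pi\subseteq \rad(\Lambda)P+P\rad(\Lambda)$, and because tensoring on the right by $\Lambda/\rad(\Lambda)$ annihilates $P\rad(\Lambda)$, the image of $K\otimes_\Lambda\Lambda/\rad(\Lambda)$ inside $P\otimes_\Lambda\Lambda/\rad(\Lambda)$ lies in $\rad(\Lambda)\cdot(P\otimes_\Lambda\Lambda/\rad(\Lambda))$. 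A projective summand of a left $\Lambda$-module contained in its radical must vanish: decomposing $P\otimes_\Lambda\Lambda/\rad(\Lambda)=Q\oplus Q'$ with $Q=K\otimes_\Lambda\Lambda/\rad(\Lambda)$ forces $Q\subseteq \rad(\Lambda)Q$, and Bass's theorem for the perfect ring $\Lambda$ then gives $Q=0$. Thus $K=K\rad(\Lambda)$, and a second application of Bass's theorem, now to $K$ as a right $\Lambda$-module, yields $K=0$, so $B\cong P$ is $\Lambda^{\env}$-projective; part (b) follows by contraposition. I expect the main obstacle to be the identification of $\rad(\Lambda^{\env})$ from separability, which is precisely what aligns the $\Lambda^{\env}$-projective cover with the left-$\Lambda$-radical filtration that is visible after applying the functor $-\otimes_\Lambda\Lambda/\rad(\Lambda)$.
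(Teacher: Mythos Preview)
Your argument is correct and takes a genuinely different route from the paper's proof. The paper proves (a) cohomologically: it invokes the $\Hom$--$\otimes$ adjunction in the form of a derived isomorphism
\[
\Ext^i_\Lambda\bigl(B\otimes_\Lambda\Lambda/\rad(\Lambda),\,\Lambda/\rad(\Lambda)\bigr)\;\cong\;
\Ext^i_{\Lambda^{\env}}\bigl(B,\,\Hom_k(\Lambda/\rad(\Lambda),\Lambda/\rad(\Lambda))\bigr),
\]
valid because $B_\Lambda$ is projective, and then uses separability to see that $\Hom_k(\Lambda/\rad(\Lambda),\Lambda/\rad(\Lambda))\cong\Lambda/\rad(\Lambda)\otimes_k D(\Lambda/\rad(\Lambda))$ is a semisimple $\Lambda^{\env}$-module containing every simple as a summand. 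Projectivity of $B\otimes_\Lambda\Lambda/\rad(\Lambda)$ kills the left side for $i>0$, hence $\Ext^{>0}_{\Lambda^{\env}}(B,S)=0$ for every simple $S$, so $B$ is projective.

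Your approach is module-theoretic rather than Ext-theoretic: a projective-cover argument combined with the generalized Nakayama lemma over a perfect ring. Separability enters for you via the identification $\rad(\Lambda^{\env})=\rad(\Lambda)\otimes_k\Lambda+\Lambda\otimes_k\rad(\Lambda)$, which is exactly dual to the paper's use of separability to make $\Lambda/\rad(\Lambda)\otimes_k D(\Lambda/\rad(\Lambda))$ semisimple. The paper's proof is shorter because it leverages the Ext isomorphism (their Theorem~\ref{thm:homtensor2theorem}) established immediately before; your proof is more self-contained and avoids derived functors entirely, at the cost of tracking the projective cover through the functor by hand. Both methods handle the infinite-dimensional case in $\lrP(\Lambda^{\env})$ without difficulty.
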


\begin{proof}
(a) Using Theorem \ref{thm:homtensor2theorem} when $B_\Lambda$ is a
projective $\Lambda$-module and $\Delta = k$, we have the
following isomorphism: 
\[\Ext^i_\Lambda(B\otimes_\Lambda
  \Lambda/\rad(\Lambda),\Lambda/\rad(\Lambda)) \cong
  \Ext^i_{\Lambda^\env}(B,\Hom_k(\Lambda/\rad(\Lambda),\Lambda/\rad(\Lambda)))\]
  for all $i>0$.
Note that 
\[\Hom_k(\Lambda/\rad(\Lambda),\Lambda/\rad(\Lambda)) \cong
  \Lambda/\rad(\Lambda)\otimes_k D(\Lambda/\rad(\Lambda)),\]
and any simple right $\Lambda$-module is a direct summand of $D(\Lambda/\rad(\Lambda))$. By \cite[Proposition 7.7]{CR}, using that $\Lambda/\rad(\Lambda)$ is separable over $k$, the
module
\[\Lambda/\rad(\Lambda)\otimes_k D(\Lambda/\rad(\Lambda))\]
is semisimple and any simple $\Lambda^\env$-module is a direct summand. It follows that $B$ is a projective $\Lambda^\env$-module. This proves the claim.

(b) This is a direct consequence of (a).
\end{proof}

\subsection{Modules and bimodules for a finite dimensional Hopf algebra $A$}
\label{subsect:functorF}
Here we take $A$ to be a finite dimensional Hopf algebra over a field $k$.
We are interested in categories of $A$-modules and $A$-bimodules. 
We will define some needed functors relating them. 

Let
\[
  F\colon \smod (A) \rightarrow \smod (A^{\env})
\]
denote the following tensor induction functor: for each $A$-module $M$,
\[
F(M) = A^{\env}\otimes_{\delta(A)} M ,
\]
where $\delta \colon A \rightarrow A^{\env}$ is the algebra homomorphism
given by
$
\delta (a) = \sum a_1\otimes S(a_2)
$
for all $a\in A$, embedding $A$ as a subalgebra of $A^{\env}$.

The following fact is known, but we provide a proof for completeness. 

\begin{prop}
  The image of $F$ is contained in $\lrp (A^{\env})$.
\end{prop}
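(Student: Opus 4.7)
The plan is to show that $F(M)$ is in fact \emph{free} of rank $\dim_k M$ both as a left $A$-module and as a right $A$-module, from which the required projectivity follows automatically; finite generation of $F(M)$ over $A^{\env}$ is clear since $A$ and $M$ are finite dimensional.

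For the left module structure, I would introduce the obvious map
\[
\phi\colon A\otimes_k M \longrightarrow F(M), \qquad \phi(a\otimes m) \;=\; (a\otimes 1)\otimes_{\delta(A)} m,
\]
which is manifestly left $A$-linear when the action on $A\otimes_k M$ is placed on the first factor. The main task is to invert $\phi$. The antipode $S$, which is an isomorphism because $A$ is finite dimensional, suggests the candidate
\[
\psi\bigl((a\otimes b)\otimes_{\delta(A)} m\bigr) \;=\; \textstyle\sum a\,b_{(2)}\otimes S^{-1}(b_{(1)})\cdot m ,
\]
where $\Delta(b)=\sum b_{(1)}\otimes b_{(2)}$. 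Three verifications are then needed: (i) $\psi$ respects the $\delta(A)$-balancing, i.e.\ $\psi((a\otimes b)\delta(c)\otimes m)=\psi((a\otimes b)\otimes c\cdot m)$; (ii) $\psi\comp\phi=\id$; (iii) $\phi\comp\psi=\id$. Each reduces, after using the reversed multiplication of $A^{\opp}$ and the formula $\Delta\comp S^{-1}=(S^{-1}\otimes S^{-1})\comp\tau\comp\Delta$, to the antipode axioms $\sum y_{(1)}S(y_{(2)})=\epsilon(y)=\sum S(y_{(1)})y_{(2)}$ and their $S^{-1}$-analogues. Since $A\otimes_k M$ is visibly free as a left $A$-module on any $k$-basis of $M$, freeness of ${}_A F(M)$ follows.

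The right module structure is handled in parallel using
\[
\phi'\colon M\otimes_k A\longrightarrow F(M), \qquad \phi'(m\otimes a)\;=\;(1\otimes a)\otimes_{\delta(A)} m,
\]
which is right $A$-linear via multiplication on the second factor of $M\otimes_k A$ (the right $A$-action on $F(M)$ coming from the $A^{\opp}$-factor of $A^{\env}$). Because $\delta$ carries $S$ on only one side, the inverse here takes the slightly different form
\[
\psi'\bigl((a\otimes b)\otimes_{\delta(A)} m\bigr) \;=\; \textstyle\sum a_{(1)}\cdot m \otimes a_{(2)}b,
\]
and the key identity in checking $\phi'\psi'=\id$ is $\sum a_{(1)}\otimes S((a_{(2)})_{(1)})\,(a_{(2)})_{(2)} = a\otimes 1$, an immediate consequence of coassociativity and $\sum y_{(1)}S(y_{(2)})=\epsilon(y)$. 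Once $\phi'$ is shown to be an isomorphism, $F(M)$ is free, hence projective, as a right $A$-module.

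The main obstacle is purely combinatorial: keeping Sweedler indices, the opposite multiplication in $A^{\opp}$, and the $\delta(A)$-balancing correctly aligned through several layers of antipode applications. No Hopf-algebraic input beyond the antipode axioms and the invertibility of $S$ in finite dimensions is required. Together the two isomorphisms imply $F(M)\in\lrp(A^{\env})$, as claimed.
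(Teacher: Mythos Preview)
Your proposal is correct and follows essentially the same route as the paper. The paper also exhibits isomorphisms $F(M)\cong A\otimes_k M_{\tr}$ (left $A$-modules) and $F(M)\cong M_{\tr}\otimes_k A$ (right $A$-modules); the only cosmetic difference is that the paper writes the maps in the direction $F(M)\to A\otimes_k M_{\tr}$ and $F(M)\to M_{\tr}\otimes_k A$, first establishing the rewriting identities $(a\otimes b)\otimes_{\delta(A)} m=\sum(ab_2\otimes 1)\otimes_{\delta(A)} S^{-1}(b_1)m=\sum(1\otimes a_2 b)\otimes_{\delta(A)} a_1 m$ and then reading off the isomorphisms, whereas you define the obvious maps in the other direction and write down explicit inverses. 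Your inverse formulas $\psi,\psi'$ are exactly these rewriting identities, and the balancing and composition checks you outline reduce to the same antipode computations.
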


\begin{proof}
Let $M$ be an $A$-module. 
First note that any element $(a\otimes b)\otimes_{\delta(A)} m$ of $F(M)$,
for $a,b\in A$ and $m\in M$, 
may be written in two other ways:
\begin{eqnarray*}
  (a\otimes b)\otimes_{\delta(A)} m & = &   \sum (a_1\otimes S(a_2)a_3b)
                \otimes_{\delta(A)}m\\
    &=&  \sum(1\otimes a_2 b)\otimes_{\delta(A)} a_1 m, \\
    (a\otimes b)\otimes_{\delta(A)} m & = & \sum
       (ab_3 S^{-1}(b_2)\otimes b_1)\otimes
    _{\delta(A)} m \\
      &=& \sum (ab_2\otimes 1)\otimes_{\delta(A)} S^{-1}(b_1)m .
      \end{eqnarray*}

Define an isomorphism of right $A$-modules
\begin{equation}\label{eqn:tr-iso}
\phi \colon F(M)\rightarrow M_{\tr}\otimes A ,
\end{equation}
where $M_{\tr}$ denotes
the vector space $M$ with trivial right 
$A$-module structure, via the counit $\varepsilon$.
Let 
$
\phi ( (1\otimes b) \otimes_{\delta(A)} \otimes m) = m\otimes b 
$
for all $b\in A$ and $m\in M$, and extend linearly.
Then by its definition, $\phi$ is a right $A$-module homomorphism,
and it can be seen to be an isomorphism. 
Since $M_{\tr}\otimes A$ is free as a right $A$-module, so is $F(M)$.

Next, we similarly define an isomorphism of left $A$-modules
$
\psi\colon F(M)\rightarrow A\otimes M_{\tr} ,
$
where this time $M_{\tr}$ denotes the vector space $M$
with trivial left $A$-module structure, by
$
\psi((a\otimes 1)\otimes_{\delta(A)} m ) = a\otimes m
$
for all $a\in A$ and $m\in M$. 
Since $A\otimes M_{\tr}$ is free as a left $A$-module, so is $F(M)$. 
\end{proof}

\begin{rem}\label{rem:tr-right}
  Under the isomorphism $\phi$ of right $A$-modules in the above proof,
  the induced $A$-bimodule structure on $M_{\tr}\ot A$ is given by 
  \[
  a\cdot (m\ot b) = \sum a_1m \ot a_2b \ \ \
  \mbox{ and } \ \ \ (m\ot b)\cdot a= m\ot ba .
  \]
  To see this, note that $\phi$ takes $(1\ot b)\ot_{\delta(A)}m$
  to $m\ot b$.
  Using the expression from the beginning of the proof above,
  the left $A$-action on $F(M)$ may be given as 
  \[
  a\cdot ( (1\ot b)\ot_{\delta(A)} m ) = (a\ot b)\ot_{\delta(A)}m
  = \sum (1\ot a_2b)\ot_{\delta(A)} a_1m ,
  \]
  and $\phi$ takes this element to $\sum a_1m \ot a_2b$, as desired.
  Similarly, for the right action, note that on $M_{\tr}\ot A$,
  \[
  (m\ot b)\cdot a = \sum m \varepsilon(a_1)\ot ba_2
  = \sum m \ot b\varepsilon(a_1)a_2 = m\ot ba .
  \]
\end{rem}

Now let
\[
G\colon  \smod (A^{\env}) \rightarrow \smod (A) 
\]
denote the functor given by action on the identity object 
$\unit = k$ of $\smod (A)$, that is, 
\[
G(Y) = Y\ot_A \unit  = Y * \unit  ,
\]
with left $A$-module action given by the left action on
the $A$-bimodule $Y$.

\begin{prop}\label{prop:Hopf-GF}
The functor $F$ is an exact monoidal functor that is right inverse to $G$,
that is $GF\cong 1_{\smod (A)}$.
\end{prop}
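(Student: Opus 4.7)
The plan is to leverage the isomorphism $\phi$ from equation~(\ref{eqn:tr-iso}) (and its bimodule extension in Remark~\ref{rem:tr-right}) to reduce all three claims to easy calculations on $M_{\tr}\otimes A$. Concretely, $\phi$ gives a natural isomorphism of $A$-bimodules $F(M)\cong M_{\tr}\otimes A$, where the bimodule structure on $M_{\tr}\otimes A$ is $a\cdot(m\otimes b)=\sum a_1m\otimes a_2b$ and $(m\otimes b)\cdot a=m\otimes ba$. At the underlying vector-space level $F$ is thus isomorphic to $M\mapsto M\otimes_k A$, and since $A$ is $k$-flat this functor is exact; because exactness of a sequence of $A^{\env}$-modules is detected on the underlying vector spaces, $F$ is exact.

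Next, for the monoidal structure, I will first check that $F$ sends the tensor unit of $\smod(A)$ (the field $k$ with the trivial action) to the tensor unit of $\smod(A^{\env})$ (the regular bimodule $A$). Applying $\phi$ to $F(k)$ and using Remark~\ref{rem:tr-right}, the bimodule structure on $k_{\tr}\otimes A$ becomes $a\cdot(1\otimes b)=\sum\varepsilon(a_1)\otimes a_2b=1\otimes ab$ and $(1\otimes b)\cdot a=1\otimes ba$, so $\phi$ identifies $F(k)$ with $A$ as a bimodule.

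To produce the coherence isomorphism $\nu_{M,N}\colon F(M)\otimes_A F(N)\xrightarrow{\sim} F(M\otimes N)$, I will work through $\phi$ and define
\[
\nu_{M,N}\colon (M_{\tr}\otimes A)\otimes_A (N_{\tr}\otimes A)\longrightarrow (M\otimes N)_{\tr}\otimes A,\qquad
(m\otimes a)\otimes(n\otimes b)\longmapsto \sum(m\otimes a_1n)\otimes a_2b,
\]
with proposed inverse $(m\otimes n)\otimes a\mapsto (m\otimes 1)\otimes(n\otimes a)$. Well-definedness over $\otimes_A$ uses coassociativity and the relation $(m\otimes ba)\otimes(n\otimes c)=(m\otimes b)\otimes\sum(a_1n\otimes a_2c)$ in the tensor product; the two composites reduce to identities via $\sum\varepsilon(1_1)1_2=1$ and the defining $\otimes_A$ relation, respectively. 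A direct check then verifies $\nu$ is $A$-bilinear and natural. Finally, the pentagon and unit axioms for a monoidal functor reduce to straightforward Sweedler-notation identities on $(M\otimes N\otimes P)_{\tr}\otimes A$, using coassociativity and the counit axiom; this verification of the coherence diagrams is the most tedious part of the argument but is purely formal.

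For the last assertion, I compute $GF(M)=F(M)\otimes_A k\cong(M_{\tr}\otimes A)\otimes_A k\cong M_{\tr}\otimes k\cong M$ as vector spaces, where the last two isomorphisms use the right action $(m\otimes b)\cdot a=m\otimes ba$ on $M_{\tr}\otimes A$. To check this is natural as left $A$-modules, I trace the left action: $a\cdot((m\otimes b)\otimes 1)=\sum(a_1m\otimes a_2b)\otimes 1\mapsto\sum\varepsilon(a_2b)a_1m=\varepsilon(b)am$, which on $M_{\tr}\otimes k\cong M$ corresponds to $a\cdot m=am$, the original action. Naturality in $M$ is immediate from that of $\phi$. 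Thus $GF\cong 1_{\smod(A)}$.
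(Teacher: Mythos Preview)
Your proof is correct and, for the claim $GF\cong 1_{\smod(A)}$, follows essentially the same computation as the paper: both use the isomorphism $\phi$ of~(\ref{eqn:tr-iso}) and Remark~\ref{rem:tr-right} to identify $GF(M)\cong (M_{\tr}\otimes A)\otimes_A k\cong M$ and then verify that the induced left action is the original one.

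The only difference is in how the exact monoidal claim is handled. The paper simply invokes~\cite[Section~4]{KaradagW2022} for the assertion that $F$ is exact monoidal, whereas you supply the argument directly: exactness from the underlying vector-space identification $F(M)\cong M\otimes_k A$, the unit isomorphism $F(k)\cong A$ from Remark~\ref{rem:tr-right}, and an explicit coherence isomorphism $\nu_{M,N}$ with inverse. Your construction is correct (well-definedness over $\otimes_A$, bilinearity, and invertibility all check out as you indicate), and the coherence axioms are indeed formal consequences of coassociativity and the counit axiom. So your version is more self-contained, at the cost of the routine verifications you flag; the paper's version is shorter but relies on an external reference.
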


\begin{proof}
The functor $F$ is exact monoidal by~\cite[Section 4]{KaradagW2022};
in particular, $F(\unit_{\smod (A)} ) \cong \unit_{\smod (A^{\env})}$ and
$F(M\ot_k N) \cong F(M)\ot_{A} F(N)$ for all objects $M,N$ of $\smod (A)$.
Now let $M$ be any left $A$-module.
  By~(\ref{eqn:tr-iso}) and Remark~\ref{rem:tr-right},
  \[
    GF(M)  \cong    G (M_{\tr}\ot A) 
    =  (M_{\tr}\ot A)\ot_A   k 
    \cong   M_{\tr}\ot k .
  \]
  Again by Remark~\ref{rem:tr-right},
  the left $A$-module structure thus induced on
  this vector space $M_{\tr}\ot k\cong M$
  is indeed the original $A$-module structure on $M$,
  since the action on $k$ is via the counit $\varepsilon$.
\end{proof}

\begin{rem}
  Under a unipotent condition that will be imposed in a later section,
  that is the condition that 
$A$ has unique simple module $k$ up to isomorphism, 
the image of $F$ is in fact 
contained in $\E=\Thick(A)$ since $F$ takes extensions to extensions. 
  \end{rem}

\section{Meditations on unipotent monoidal triangular geometry}
\label{sec:unipotent}

In this section, we prove several general results for monoidal triangulated categories that are thickly 
generated by their tensor units. We will apply these results in Section~\ref{sec:unipotent-Hopf} to study the left-right projective category
$\ulrp (A^{\env})$ and $\Thick(\unit) \subseteq \ulrp(A^{\env})$, for $A$ a finite dimensional unipotent Hopf algebra.

\subsection{The shell}
Let $\K$ be a \mtc.
The thick subcategory $\mathcal{E} = \mathcal{E}(\K) = \Thick(\mathfrak{e})$ of $\K$
plays a central role in the theory developed later.
We study some of its properties here. 

First, we review the category $\mathcal{E}$ in the setting of group
algebras. Consider a group algebra $kG$ of a finite group $G$ and field
$k$ with characteristic dividing the order of $G$. 
Let $\widehat{\Ext}_{kG}$ denote the Tate extension groups. 
The full subcategory of $\umod(kG)$  generated by the indecomposable $kG$-modules $M$ in the
principal block $kG_0$ of $kG$ with
\[\widehat{\Ext}^{\bu}_{kG}(k,M) \neq 0\]
coincides with the thick subcategory of $\umod (kG)$ generated by
$k$ (see \cite[Comment before Proposition 4.1]{BCR}, \cite[Sections 2
and 3]{BCRo}).  Any module $N$ in the principal block of $kG$ with 
\[\widehat{\Ext}^{\bu}_{kG}(k,N) = 0\]
is in the thick subcategory of modules with variety in the nucleus
\cite[Proposition 4.4]{BCR}. This motivates calling the thick
subcategory generated by the tensor identity $k$ the \emph{shell} of
$\umod(kG)$. Next we generalize this to any \mtc\ $\K$. 

For any object $x$ in $\mathcal{E} = \mathcal{E}(\K)$ there exists an integer $n$
such that
\[\Hom_\K(\Sigma^n(\mathfrak{e}),x) \neq 0\textup{\ or equivalently\ }
  \Hom_\K^{\mathbb{Z}}(\mathfrak{e},x)\neq 0.\]
By definition, since $\mathcal{E} = \Thick(\mathfrak{e})$, the object
$\mathfrak{e}$ is a classical generator of $\mathcal{E}$. Then
$\mathfrak{e}$ is also a generator
of $\mathcal{E}$ (in the sense of e.g.~ \cite[Section 3.1.1]{Rouquier}), that is, for any
nonzero object $x$ in $\mathcal{E}$ there exists an integer $n$ and a
nonzero morphism $\mathfrak{e}\to \Sigma^n(x)$. Consequently, there
is a nonzero morphism $\Sigma^{-n}(\mathfrak{e})\to x$. This
motivates calling $\mathcal{E}$ the \emph{shell} of
$\mathcal{K}$ and
\[
\{y\in \K\mid
  \Hom_{\mathcal{K}}^{\mathbb{Z}}(\mathfrak{e},a) = 0\}\]
the \emph{nucleus} of $\mathcal{K}$.

The thick subcategory $\mathcal{E}$ is in addition a monoidal
thick triangulated subcategory of $\K$, a well-known fact we record in
the next lemma. The second claim in the lemma follows using an argument analogous 
to the proof of Proposition \ref{prop:endo-functors}(d).

\begin{lem}\label{lem:montrisubcat}
Let $\K$ be a \mtc. Then $\mathcal{E} =
\mathcal{E}(\mathcal{K})$ is a monoidal thick triangulated subcategory
of $\K$. If $\K$ is rigid, then $\E$ is rigid, and the duality functors on $\E$ 
are the restrictions of the duality functors on $\K$.
\end{lem}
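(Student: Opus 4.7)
The plan is to establish three things in turn: that $\E$ is closed under $\otimes$, that when $\K$ is rigid the left and right duality functors of $\K$ restrict to $\E$, and finally that the restricted data genuinely endow $\E$ with a rigid monoidal triangulated structure. Each step is a formal application of the ``thick subcategory trick'' that preimages of thick subcategories under exact functors are thick.

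For monoidal closure I would fix $y\in\E$ and form the full subcategory
\[
\mathcal{S}_y := \{x\in\K \mid x\otimes y \in \E\}.
\]
Because $-\otimes y$ is an exact endofunctor of the triangulated category $\K$ and $\E$ is a thick subcategory of $\K$, the preimage $\mathcal{S}_y$ is thick in $\K$. The structure isomorphism $\mathfrak{l}\colon \mathfrak{e}\otimes y \xrightarrow{\sim} y$ shows that $\mathfrak{e}\in \mathcal{S}_y$, so $\E = \Thick(\mathfrak{e}) \subseteq \mathcal{S}_y$. This gives $x\otimes y \in \E$ for all $x,y\in \E$.

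Now assume $\K$ is rigid. Since $\mathfrak{e}$ is its own left and right dual, the full subcategories
\[
\{x\in\K \mid x\ldual \in \E\} \quad \textup{ and } \quad \{x\in\K \mid \rdual x \in \E\}
\]
both contain $\mathfrak{e}$. Granted that $(-)\ldual$ and $\rdual(-)$ are contravariant triangulated endofunctors of $\K$ (a standard feature of rigid monoidal triangulated categories), both subcategories are thick as preimages of $\E$, and therefore each contains $\Thick(\mathfrak{e}) = \E$. Thus the two duality functors restrict to endofunctors of $\E$. Because $\E$ is a full subcategory of $\K$ that is closed under both $\otimes$ and duals, for every $x\in\E$ the evaluation and coevaluation morphisms $\ev_x, \coev_x, \ev'_x, \coev'_x$ already have source and target in $\E$ and hence define morphisms in $\E$. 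The triangle identities are equalities of composites in $\K$ and so continue to hold in the full subcategory $\E$, exhibiting $\E$ as rigid with duality inherited from $\K$.

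The only ingredient that is not purely formal is the triangulatedness of the duality operations on $\K$, which I would take as a known fact about rigid monoidal triangulated categories rather than reprove; everything else reduces to the observation that an exact functor pulls back thick subcategories to thick subcategories, combined with the self-duality of the tensor unit. An alternative route, paralleling the proof of Proposition~\ref{prop:endo-functors}(d), would be to produce for each generator $\Sigma^i(\mathfrak{e})$ of $\E$ an explicit dual (namely $\Sigma^{-i}(\mathfrak{e})$) and propagate along cones and summands, but the abstract argument seems cleaner in this generality.
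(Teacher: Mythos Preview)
Your proof is correct and matches what the paper indicates: the paper records the first claim as well known and, for the second, points to the proof of Proposition~\ref{prop:endo-functors}(d), which is exactly the alternative ``check the generators $\Sigma^i(\mathfrak{e})$ and propagate'' route you mention at the end. Your preimage/thick-subcategory formulation is simply the clean abstract packaging of that same induction.
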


Recall that an (abelian) tensor category is called {\emph{unipotent}} if its only simple 
object is its tensor unit, see \cite[Definition 2.1]{EG}. We make the analogous definition for 
monoidal triangulated categories.

\begin{defn}
\label{defn:unipotent}
Let $\K$ be a \mtc. We call $\K$ {\emph{unipotent}} if $\K = \E(\K)$. 
\end{defn}

In particular, by Lemma \ref{lem:montrisubcat},
the shell $\E(\K)$ of a \mtc\ $\K$ is a unipotent \mtc.

For (a) and (b) in the following result, compare with \cite[Lemma
3.13]{Stevenson}.

\quad

\begin{lem}\label{cor:propofmathcalE}
  Let $\K$ be a \mtc\ acting on a triangulated
  category $\A$.  Let $\mathcal{E} = \mathcal{E}(\mathcal{K})$.
\begin{enumerate}[\qquad \rm(a)]
\item Any thick subcategory $\mathcal{U}$ of $\K$ has the property
  that
  \[\mathcal{E}\otimes \mathcal{U} = \mathcal{U} = \mathcal{U}\otimes
    \mathcal{E},\]
  that is, any thick subcategory in $\mathcal{E}$ is an ideal in
  $\mathcal{E}$.  
\item Any thick subcategory $\mathcal{U}$ of $\A$ has the property
  that
  \[\mathcal{E} * \mathcal{U} = \mathcal{U},\]
  that is, any thick subcategory in $\mathcal{A}$ is an
  $\mathcal{E}$-submodule of $\mathcal{A}$. 
\item If $\P$ is a prime ideal in $\mathcal{E}$, then $\P$ is
    completely prime. In other words, the category $\mathcal{E}$ is completely
    prime.  
\end{enumerate}
\end{lem}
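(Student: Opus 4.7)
The plan for all three parts exploits the fact that $\mathfrak{e}$ classically generates $\mathcal{E}$, combined with the biexactness of $\otimes$ and $*$. The key technique in each case is to show that a suitable ``tautological'' subcategory is thick in $\K$ and contains $\mathfrak{e}$, hence contains $\mathcal{E} = \Thick(\mathfrak{e})$.

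For (a), fix $u \in \mathcal{U}$ and define
\[
\mathcal{S}_u := \{y \in \K \mid y \otimes u \in \mathcal{U}\}.
\]
Since $\mathfrak{e} \otimes u \cong u \in \mathcal{U}$, we have $\mathfrak{e} \in \mathcal{S}_u$. Because $-\otimes u$ is exact and $\mathcal{U}$ is closed under suspensions, cones, and summands, $\mathcal{S}_u$ is thick in $\K$, so $\mathcal{E} \subseteq \mathcal{S}_u$, giving $\mathcal{E} \otimes u \subseteq \mathcal{U}$. Letting $u$ range yields $\mathcal{E} \otimes \mathcal{U} \subseteq \mathcal{U}$; the reverse containment is immediate from $u \cong \mathfrak{e} \otimes u$, and the argument for $\mathcal{U} \otimes \mathcal{E}$ is symmetric via $u \cong u \otimes \mathfrak{e}$. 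Part (b) follows by exactly the same argument with $\otimes$ replaced by the action $*$ and $\mathfrak{l}$ replaced by $\mathfrak{l}'$.

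For (c), let $\P$ be a prime ideal in $\mathcal{E}$ and suppose $x \otimes y \in \P$ for $x,y \in \mathcal{E}$. Applying (a) to the thick subcategories $\Thick(x)$ and $\Thick(y)$ of $\mathcal{E}$ (viewed as thick subcategories of $\K$) shows that both are in fact two-sided thick ideals of $\mathcal{E}$. Now apply the auxiliary-subcategory trick twice to establish $\Thick(x) \otimes \Thick(y) \subseteq \Thick(x \otimes y) \subseteq \P$. First, the collection $\{z \in \mathcal{E} \mid z \otimes y \in \Thick(x \otimes y)\}$ contains $x$ and is thick by exactness of $-\otimes y$, so it contains $\Thick(x)$. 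Then, for each such $z$, the collection $\{w \in \mathcal{E} \mid z \otimes w \in \Thick(z \otimes y)\}$ contains $y$ and is thick by exactness of $z \otimes -$, so it contains $\Thick(y)$. Primeness of $\P$ now forces $\Thick(x) \subseteq \P$ or $\Thick(y) \subseteq \P$, i.e.~$x \in \P$ or $y \in \P$.

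The main ``obstacle'' is purely bookkeeping: one must check carefully that the various tautological subcategories above really are thick in $\K$, and that iterating the trick in (c) produces the correct containment $\Thick(x)\otimes\Thick(y)\subseteq\Thick(x\otimes y)$. Both verifications reduce to the exactness of $\otimes$ in each variable and the thickness of the relevant target subcategory. No rigidity or braiding hypothesis is needed, and no special structure beyond $\mathfrak{e}$ generating $\mathcal{E}$ is used, so the lemma holds for any \mtc{} $\K$.
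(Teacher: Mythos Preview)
Your proof is correct and follows the standard argument. The paper itself does not supply a proof of this lemma: it merely points to \cite[Lemma 3.13]{Stevenson} for comparison on parts (a) and (b), and states (c) without justification. Your tautological-subcategory argument for (a) and (b) is exactly the approach one would expect (and the one in Stevenson), and your proof of (c)---reducing $x\otimes y\in\P$ to $\Thick(x)\otimes\Thick(y)\subseteq\Thick(x\otimes y)\subseteq\P$ via two nested thickness arguments, then invoking (a) to ensure $\Thick(x)$ and $\Thick(y)$ are ideals---is clean and correct.
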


We note some elementary consequences for $\E$ which we will use later.

\begin{lem} \label{lem:tensor-int}
Suppose that $\K$ is a rigid \mtc. 
If $\I$ and $\J$ are thick ideals of $\K$, then $ \I \otimes \J = \langle \I \cap \J\rangle_{\K}.$ 
If $\I$ and $\J$ are thick ideals  of $\E$, then $\I \otimes \J = \I \cap \J$. In particular, the tensor product of thick ideals commutes.
\end{lem}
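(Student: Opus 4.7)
The plan is to prove the first identity by two inclusions, using rigidity for the nontrivial direction, and then to deduce the second identity and the commutativity as essentially formal consequences.

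For the easy direction I would note that $\I$ and $\J$ are both two-sided thick ideals, so any generating object $x\otimes y$ of $\I\otimes \J$, with $x\in\I$ and $y\in\J$, lies in both $\I$ and $\J$ (as each of $\I,\J$ absorbs the other factor). Since the intersection of two thick ideals is again a thick ideal, this yields $\I\otimes\J\subseteq \I\cap\J\subseteq \langle\I\cap\J\rangle_\K$.

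For the reverse inclusion the idea is that rigidity allows one to realize every object of $\I\cap\J$ as a direct summand of a tensor product of the form required by $\I\otimes\J$. Concretely, for any $z\in\I\cap\J$ the composition
\[
 z \extto{\coev_z\otimes 1_z} (z\otimes z\ldual)\otimes z \extto{1_z\otimes \ev_z} z
\]
is the identity of $z$, so $z$ is a direct summand of $(z\otimes z\ldual)\otimes z$. Here $z\otimes z\ldual\in\I$ because $z\in\I$ and $\I$ is a two-sided ideal, while $z\in\J$; thus $(z\otimes z\ldual)\otimes z$ is one of the generating tensors of $\I\otimes\J$. Since $\I\otimes\J$ is a thick ideal and hence closed under direct summands, $z\in\I\otimes\J$. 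This gives $\I\cap\J\subseteq \I\otimes\J$, and then $\langle\I\cap\J\rangle_\K\subseteq \I\otimes\J$, completing the first identity.

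For the second identity I would apply the first identity to the monoidal triangulated category $\E$ in place of $\K$; this is legitimate since $\E$ is rigid by Lemma~\ref{lem:montrisubcat}, and the duals in $\E$ are restrictions of duals in $\K$. The outcome is $\I\otimes\J=\langle \I\cap \J\rangle_\E$. But by Lemma~\ref{cor:propofmathcalE}(a) every thick subcategory of $\E$ is already a thick ideal of $\E$, and $\I\cap\J$ is such, so the brackets are redundant and $\I\otimes\J=\I\cap\J$. The final assertion is then immediate from the symmetry of intersection: $\I\otimes\J=\I\cap\J=\J\cap\I=\J\otimes\I$ in both settings.

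The only genuinely nontrivial step is the rigidity-based direct summand argument; the chief pitfall is keeping straight that $\I\otimes\J$ denotes the thick ideal generated by the pairwise tensors (so that direct-summand closure is available in the last step) rather than the naive class of such tensor products.
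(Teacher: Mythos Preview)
Your proof is correct and takes a genuinely different route from the paper's. The paper invokes \cite[Proposition 4.1.1]{NVY2}, which says that in a rigid \mtc{} every thick ideal is the intersection of the primes containing it, and then argues via the prime spectrum: $\I\cap\J$ equals the intersection of all primes containing $\I$ or $\J$, which are precisely the primes containing $\I\otimes\J$, whence $\I\cap\J=\langle\I\otimes\J\rangle_\K$. Your argument is more elementary and self-contained: you use rigidity directly, exhibiting each $z\in\I\cap\J$ as a retract of $(z\otimes z\ldual)\otimes z$ with $z\otimes z\ldual\in\I$ and $z\in\J$. This avoids the detour through the spectrum and the external semiprimeness result, and it makes transparent that in fact $\I\otimes\J=\I\cap\J$ already holds for arbitrary rigid $\K$ (so the $\E$ case is not genuinely stronger). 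The paper's approach, by contrast, foreshadows the prime-ideal machinery used throughout and embeds the lemma in the Balmer-spectrum framework.
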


\begin{proof}
    It is clear from the definition of thick ideal that $ \I \otimes \J \subseteq \I \cap \J$. Since $\K$ is rigid, by \cite[Proposition 4.1.1]{NVY2} every thick ideal is the intersection of all primes containing it. Therefore,
    \begin{align*}
        \I \cap \J &= \left ( \bigcap_{\I \subseteq \P} \P\right ) \cap \left ( \bigcap_{\J \subseteq \P} \P \right ) \\
        &= \bigcap_{\I \text{ or } \J \subseteq \P} \P\\
        &= \bigcap_{\I \otimes \J \subseteq \P} \P\\
        &= \bigcap_{\langle \I \otimes \J \rangle_{\K} \subseteq \P} \P\\
        &= \langle \I \otimes \J\rangle_{\K},
    \end{align*}
where each intersection is taken over all $\P \in \Spc (\K)$ satisfying the given condition. Since every thick subcategory of $\E$ is a thick ideal by Lemma~\ref{cor:propofmathcalE}(a), if $\I$ and $\J$ are thick ideals of $\E$ then $\langle \I \otimes \J \rangle_{\E} = \I \otimes \J$, and the second claim follows.
\end{proof}

\begin{lem}
\label{lem:tensor-intersection}
Suppose that $\E$ is rigid, and let $x$ and $y$ be objects of $\E$.  
Then $\langle x \otimes y \rangle_{\E} = \langle x \rangle_{\E} \otimes \langle y \rangle_{\E} = \langle x \rangle_{\E} \cap \langle y \rangle_{\E}= \Thick(x) \cap \Thick(y)$.
\end{lem}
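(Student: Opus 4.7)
The plan is to reduce the four-term chain of equalities to the two features that distinguish the shell $\E$ from a general rigid \mtc: every thick subcategory of $\E$ is automatically a thick ideal (Lemma~\ref{cor:propofmathcalE}(a)), and every prime of $\E$ is completely prime (Lemma~\ref{cor:propofmathcalE}(c)). Since $\E$ is rigid by hypothesis, I can freely invoke Lemma~\ref{lem:tensor-int} and the fact from \cite[Proposition~4.1.1]{NVY2} that every thick ideal is the intersection of the prime thick ideals containing it.

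First I would dispatch the easy equalities. By Lemma~\ref{cor:propofmathcalE}(a), $\Thick(x) = \langle x\rangle_{\E}$ and $\Thick(y)=\langle y\rangle_{\E}$, so the rightmost equality is immediate. The middle equality $\langle x\rangle_{\E} \otimes \langle y\rangle_{\E} = \langle x\rangle_{\E} \cap \langle y\rangle_{\E}$ is a direct application of the second claim of Lemma~\ref{lem:tensor-int} to $\E$.

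The only real work is to establish $\langle x\otimes y\rangle_{\E} = \langle x\rangle_{\E} \cap \langle y\rangle_{\E}$. The inclusion $\subseteq$ is immediate, since $x\otimes y$ lies in each of the thick ideals $\langle x\rangle_{\E}$ and $\langle y\rangle_{\E}$ by the ideal property. For the reverse inclusion I would combine rigidity with complete primality: using \cite[Proposition~4.1.1]{NVY2}, write
\[
\langle x\otimes y\rangle_{\E} \;=\; \bigcap \bigl\{\P \in \Spc(\E) \,:\, \langle x\otimes y\rangle_{\E} \subseteq \P \bigr\}.
\]
For any such $\P$ we have $x\otimes y \in \P$, and Lemma~\ref{cor:propofmathcalE}(c) forces $x\in\P$ or $y\in\P$, hence $\langle x\rangle_{\E}\subseteq\P$ or $\langle y\rangle_{\E}\subseteq\P$; in either case $\langle x\rangle_{\E} \cap \langle y\rangle_{\E} \subseteq \P$. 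Intersecting over all such $\P$ yields $\langle x\rangle_{\E} \cap \langle y\rangle_{\E} \subseteq \langle x\otimes y\rangle_{\E}$, closing the chain.

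I expect no substantial obstacle: the argument is essentially an assembly of results already set up in the section, with complete primality of $\E$ doing the genuine work of promoting the object-level relation $x\otimes y \in \langle x\rangle_{\E} \cap \langle y\rangle_{\E}$ to the reverse ideal inclusion. The only point demanding mild care is to keep straight the distinction between $\I \otimes \J$ as a set of tensor products and as the thick ideal it generates; in $\E$ these two coincide by Lemma~\ref{cor:propofmathcalE}(a) together with Lemma~\ref{lem:tensor-int}, which is exactly why the proof goes through so cleanly.
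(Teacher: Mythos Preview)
Your proposal is correct and uses essentially the same approach as the paper: both arguments rely on Lemma~\ref{cor:propofmathcalE}(a), Lemma~\ref{cor:propofmathcalE}(c), Lemma~\ref{lem:tensor-int}, and the semiprimality result \cite[Proposition~4.1.1]{NVY2}, and both pivot on the observation that a prime of $\E$ contains $x\otimes y$ iff it contains $x$ or $y$. The only cosmetic difference is that the paper first links $\langle x\otimes y\rangle_{\E}$ to $\langle x\rangle_{\E}\otimes\langle y\rangle_{\E}$ by showing they have the same primes above them, whereas you link $\langle x\otimes y\rangle_{\E}$ directly to $\langle x\rangle_{\E}\cap\langle y\rangle_{\E}$; the remaining equalities are handled identically.
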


\begin{proof}
    Since all prime ideals of $\E$ are completely prime by Lemma~\ref{cor:propofmathcalE}(c), 
    \[
    \langle x \rangle_{\E} \otimes \langle y\rangle_{\E} \subseteq \P \Leftrightarrow \langle x \rangle_{\E} \text{ or }\langle y \rangle_{\E} \subseteq \P \Leftrightarrow x \text{ or } y \in \P \Leftrightarrow x \otimes y \in \P
    \]
    for any prime ideal $\P$. Since all thick subcategories of $\E$ are thick ideals by Lemma~\ref{cor:propofmathcalE}(a), $\langle x \rangle_{\E} \otimes \langle y \rangle_{\E}$ is a thick ideal. Since all thick ideals are semiprime by \cite[Proposition 4.1.1]{NVY2}, $\langle x \rangle_{\E} \otimes \langle y \rangle_{\E}$ is the intersection of prime ideals containing it, and $\langle x \otimes y \rangle_{\E}$ is the intersection of primes containing it, i.e.~ primes containing $x \otimes y$. Hence $\langle x \otimes y \rangle_{\E}=\langle x \rangle_{\E} \otimes \langle y \rangle_{\E}$. The remaining equalities follow from Lemma~\ref{cor:propofmathcalE}(a) and Lemma \ref{lem:tensor-int}. 
\end{proof}

As an immediate consequence of Lemma \ref{lem:tensor-intersection}, as stated in the
following corollary, ideals in $\E$ generated by a tensor product do not depend on the order of the tensorands.

\begin{cor}
    \label{cor:either-side-equal}
Suppose $\E$ is rigid. For all $x$ and $y$ in $\E$, $\langle x \otimes y \rangle_{\E} = \langle y \otimes x \rangle_{\E}.$
\end{cor}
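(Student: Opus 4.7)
The plan is to deduce this immediately from Lemma \ref{lem:tensor-intersection}, which expresses the ideal generated by a tensor product as an intersection of thick subcategories that is manifestly symmetric in its inputs. There is essentially no new work required; the whole content lies in observing that intersection of subcategories does not depend on order.

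More concretely, applying Lemma \ref{lem:tensor-intersection} to the pair $(x,y)$ gives
\[
\langle x \otimes y \rangle_{\E} \;=\; \Thick(x) \cap \Thick(y),
\]
and applying the same lemma to the pair $(y,x)$ gives
\[
\langle y \otimes x \rangle_{\E} \;=\; \Thick(y) \cap \Thick(x).
\]
Since set-theoretic intersection of full subcategories is commutative, the two right-hand sides coincide, yielding the claim.

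There is no serious obstacle here. The only subtlety worth flagging is the implicit use of rigidity of $\E$: this was invoked in Lemma \ref{lem:tensor-intersection} through \cite[Proposition 4.1.1]{NVY2} (every thick ideal is semiprime) together with Lemma \ref{cor:propofmathcalE}(a), (c) (every thick subcategory of $\E$ is a thick ideal, and prime ideals in $\E$ are completely prime). These are the ingredients that break the potential asymmetry of the noncommutative tensor product at the level of ideals generated inside the unipotent shell $\E$, so the corollary is genuinely a feature of working inside $\E$ rather than in the ambient category $\K$.
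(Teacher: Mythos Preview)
Your proof is correct and matches the paper's approach exactly: the paper also deduces the corollary as an immediate consequence of Lemma~\ref{lem:tensor-intersection}, using that $\Thick(x)\cap\Thick(y)$ is symmetric in $x$ and $y$.
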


%%%%%%%%%%%%%%%%%%%%%%%%%%%%%%%%%%%%%
\subsection{Non-monoidal unipotent subcategories}
%%%%%%%%%%%%%%%%%%%%%%%%%%%%%%%%%%%%%

We now suppose that $(\K_1,\otimes_1,\unit_1)$ and $(\K_2, \otimes_2, \unit_2)$ are two rigid \mtc's such that $\K_1$ is a full triangulated subcategory of $\K_2$ with inclusion functor not necessarily monoidal, and that $\K_i = \E(\K_i)=\Thick(\unit_i)$ for $i=1,2$, that is, both $\K_1$ and $\K_2$ are unipotent. We give an abstract description of $\Spc (\K_1)$ in terms of $\Spc (\K_2)$ and prove a compatibility condition with the comparison maps from Balmer spectra to cohomological spectra. The motivation is to study, 
for $A$ a finite dimensional unipotent Hopf algebra,
the shell $\E$ of the stable left-right projective category $\ulrp(A^{\env})$ under the monoidal product $\ot_A$,
which is a non-monoidal triangulated subcategory of $\umod(A^{\env})$
under the monoidal product $\ot_k$.

We first show that thick ideals generated by tensor products do not depend on the choice of tensor product. Note that here and throughout this section, by Lemma \ref{cor:propofmathcalE}(a), if $x \in \K_1$ then $\langle x \rangle_{\K_1} = \Thick(x) = \langle x \rangle_{\K_2}$, so the notation $\langle x \rangle$ is unambiguous.

\begin{cor}
\label{cor:ideal-both-equal}
    For all $x$ and $y$ in $\K_1$, we have $\langle x \otimes_1 y \rangle = \langle x \otimes_2 y\rangle.$  \end{cor}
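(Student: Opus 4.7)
The plan is to reduce both $\langle x \otimes_1 y \rangle$ and $\langle x \otimes_2 y \rangle$ to the common expression $\Thick(x) \cap \Thick(y)$, which manifestly involves no tensor product at all, and then note that this reduction is valid separately in each of the two monoidal structures.

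First I would record the relevant setup: since $\K_1 = \E(\K_1)$ and $\K_2 = \E(\K_2)$, Lemma~\ref{cor:propofmathcalE}(a) says that every thick subcategory of either $\K_i$ is automatically a thick ideal. This is the technical input that makes the ambient category irrelevant when generating thick subcategories from a single object, which is why the paragraph immediately preceding the corollary asserts that $\Thick(x)$ (equivalently $\langle x\rangle$) is unambiguously defined for $x\in\K_1$, regardless of which of $\K_1$ or $\K_2$ is used to form it.

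Next I would apply Lemma~\ref{lem:tensor-intersection} to each of the rigid unipotent categories $\K_1$ and $\K_2$:
\[
\langle x \otimes_1 y\rangle_{\K_1} \;=\; \Thick(x)\cap\Thick(y) \;=\; \langle x \otimes_2 y\rangle_{\K_2},
\]
where the middle intersection is interpreted identically in both cases thanks to the unambiguity observation. Since $x\otimes_1 y$ lies in $\K_1 \subseteq \K_2$, one more application of the same observation identifies $\langle x\otimes_1 y\rangle_{\K_1}$ with $\langle x\otimes_1 y\rangle_{\K_2}$, which yields the corollary.

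I do not anticipate any real obstacle here: the argument is essentially a two-line concatenation of Lemma~\ref{cor:propofmathcalE}(a) and Lemma~\ref{lem:tensor-intersection}. The conceptual point worth emphasizing is that in the unipotent rigid setting, the ideal generated by a tensor product depends only on the individual tensorands, so replacing $\otimes_1$ by $\otimes_2$ cannot change the thick ideal, even though the underlying objects $x\otimes_1 y$ and $x\otimes_2 y$ may differ substantially.
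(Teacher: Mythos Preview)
Your proposal is correct and follows essentially the same route as the paper: both reduce each side to $\Thick(x)\cap\Thick(y)$ via Lemma~\ref{lem:tensor-intersection}, and then observe that this intersection is independent of which tensor product is used. Your write-up is simply more explicit about the unambiguity of the thick-ideal notation, which the paper handles in the paragraph preceding the corollary.
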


\begin{proof}
    This follows directly from Lemma \ref{lem:tensor-intersection}, since both sides of the equality are equal to $\Thick(x) \cap \Thick(y)$, which is independent of the tensor product used.
\end{proof}

Denote the Balmer support $\supp_{\K_i}$ on $\K_i$ by $\supp_i$ for $i=1,2$. We now realize $\Spc (\K_1)$ as a closed subspace of $\Spc (\K_2)$.

\begin{prop}
\label{prop:support-subcat-spc}
    There are homeomorphisms $\Spc (\K_1) \cong \supp_2(\unit_1)$ given by inverse maps 
    \begin{center}
        \begin{tikzcd}
\P \arrow[r, maps to, "\eta"]          & \{x \in \K_2 \mid x \otimes_2 \unit_1 \in \P\} \\
\Spc (\K_1) \arrow[r, bend left] & \supp_2(\unit_1) \arrow[l, bend left]             \\
\Q \cap \K_1                  & \Q \arrow[l, maps to, "\zeta"]                       
\end{tikzcd}
    \end{center}
\end{prop}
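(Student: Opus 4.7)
The plan is to verify that $\eta$ and $\zeta$ are mutually inverse continuous maps. Two facts from the surrounding development will do the heavy lifting throughout: by Lemma~\ref{cor:propofmathcalE}, each unipotent category $\K_i$ has every thick subcategory automatically an ideal and every prime ideal automatically completely prime; and by Corollary~\ref{cor:ideal-both-equal}, the thick ideals $\langle x \otimes_1 y \rangle$ and $\langle x \otimes_2 y \rangle$ agree for $x, y \in \K_1$. The linchpin observation, to be used repeatedly, is that $x \otimes_2 \unit_1 \in \K_1$ for every $x \in \K_2$: indeed, by unipotency of $\K_2$, the thick subcategory $\K_1 = \Thick(\unit_1)$ is automatically an ideal of $\K_2$, and hence closed under $\otimes_2$.

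My first step would be to verify $\zeta$ is well-defined. The set $\Q \cap \K_1$ is visibly a proper thick subcategory of $\K_1$ (using $\unit_1 \notin \Q$), hence an ideal. To show complete primality, if $a \otimes_1 b \in \Q \cap \K_1$ for $a, b \in \K_1$, Corollary~\ref{cor:ideal-both-equal} yields $a \otimes_2 b \in \Q$, and complete primality of $\Q$ in $\K_2$ forces $a \in \Q$ or $b \in \Q$. For $\eta$, I would check that $\eta(\P)$ is thick (by exactness of $-\otimes_2 \unit_1$, with the linchpin ensuring outputs land in $\K_1$), hence an ideal; that $\unit_1 \notin \eta(\P)$, since $\langle \unit_1 \otimes_2 \unit_1 \rangle = \langle \unit_1 \otimes_1 \unit_1 \rangle = \K_1 \not\subseteq \P$; and that $\eta(\P)$ is proper, since $\unit_2 \otimes_2 \unit_1 = \unit_1 \notin \P$.

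The main obstacle will be showing $\eta(\P)$ is prime. Assume $a \otimes_2 b \in \eta(\P)$, so $(a \otimes_2 b) \otimes_2 \unit_1 \in \P$. Applying Lemma~\ref{lem:tensor-intersection} inside the unipotent $\K_2$ repeatedly gives
\[
\Thick\bigl((a \otimes_2 b) \otimes_2 \unit_1\bigr) \;=\; \Thick(a) \cap \Thick(b) \cap \K_1 \;=\; \Thick(a \otimes_2 \unit_1) \cap \Thick(b \otimes_2 \unit_1),
\]
and a further application of Lemma~\ref{lem:tensor-intersection} together with Corollary~\ref{cor:ideal-both-equal} identifies this intersection with $\Thick\bigl((a \otimes_2 \unit_1) \otimes_1 (b \otimes_2 \unit_1)\bigr)$. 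Since $\P$ is completely prime in $\K_1$, it follows that $a \otimes_2 \unit_1 \in \P$ or $b \otimes_2 \unit_1 \in \P$, giving $a \in \eta(\P)$ or $b \in \eta(\P)$.

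The final step is to verify the inverse relations and continuity. For $\eta \circ \zeta = \id$, one uses $\unit_1 \notin \Q$ combined with complete primality of $\Q$ to deduce $x \otimes_2 \unit_1 \in \Q \iff x \in \Q$; for $\zeta \circ \eta = \id$, note that $x \otimes_1 \unit_1 = x$ for $x \in \K_1$, so Corollary~\ref{cor:ideal-both-equal} yields $x \otimes_2 \unit_1 \in \P \iff x \in \P$. Continuity reduces to computing preimages of basic closed sets: $\zeta^{-1}(\supp_1(a)) = \supp_2(a) \cap \supp_2(\unit_1)$ for $a \in \K_1$, and $\eta^{-1}\bigl(\supp_2(y) \cap \supp_2(\unit_1)\bigr) = \supp_1(y \otimes_2 \unit_1)$ for $y \in \K_2$, where the linchpin ensures $y \otimes_2 \unit_1 \in \K_1$.
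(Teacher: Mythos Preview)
Your proof is correct and follows essentially the same approach as the paper's: both verify well-definedness of $\eta$ and $\zeta$, check continuity via preimages of basic closed sets, and confirm the two are mutually inverse, all resting on Lemma~\ref{cor:propofmathcalE} and Corollary~\ref{cor:ideal-both-equal}. The only noteworthy difference is in the primeness argument for $\eta(\P)$: the paper inserts an extra $\unit_1$ on the left to rewrite $(a\otimes_2 b)\otimes_2\unit_1$ as $(\unit_1\otimes_2 a)\otimes_2(b\otimes_2\unit_1)$ and then invokes Corollary~\ref{cor:ideal-both-equal} and Corollary~\ref{cor:either-side-equal} directly, whereas you route through Lemma~\ref{lem:tensor-intersection} to identify $\Thick\bigl((a\otimes_2 b)\otimes_2\unit_1\bigr)$ with $\Thick(a\otimes_2\unit_1)\cap\Thick(b\otimes_2\unit_1)$; these are equivalent packagings of the same idea.
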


\begin{proof}
We check first that the claimed maps are well defined. 

Indeed, we must show that if $\Q \in \supp_2(\unit_1)$, then $\Q \cap \K_1$ is a prime for $\K_1$. Suppose $x$ and $y$ are in $\K_1$ with $x \otimes_1 y \in \Q \cap \K_1$. Then $x \otimes_2 y \in \Q \cap \K_1$ by Corollary \ref{cor:ideal-both-equal}. Therefore, $x$ or $y$ is in $\Q$ since $\Q$ is prime in $\K_2$. It follows that $x$ or $y$ is in $\Q \cap \K_1$. Note that $\Q \cap \K_1$ is properly contained in $\K_1$ since $\Q \in \supp_2(\unit_1)$, i.e.~ $\unit_1 \not \in \Q$ and is consequently not in $\Q \cap \K_1$. Therefore, $\Q \cap \K_1 \in \Spc (\K_1)$. 

On the other hand, we will show that if $\P \in \Spc (\K_1)$, then 
\[
\eta(\P):=\{x \in \K_2 \mid x \otimes_2 \unit_1 \in \P\}
\]
 is in $\supp_2(\unit_1)$, i.e.~ it is a prime ideal of $\K_2$ which does not contain $\unit_1$. Suppose $x$ and $y$ are in $\K_2$, and $x \otimes_2 y \in \eta(\P).$ By definition, $x \otimes_2 y \otimes_2 \unit_1 \in \P$. Of course, this implies $\unit_1 \otimes_2 x \otimes_2 y \otimes_2 \unit_1 \in \P$. Then $(\unit_1 \otimes_2 x) \otimes_1 (y \otimes_2 \unit_1) \in \P$ by Corollary \ref{cor:ideal-both-equal} and the fact that both $\unit_1 \otimes_2 x$ and $y \otimes_2 \unit_1$ are in $\K_1 = \Thick(\unit_1) = \langle \unit_1 \rangle_2$. Since $\P$ is a prime for $\K_1$, either $\unit_1 \otimes_2 x $ or $y \otimes_2 \unit_1$ is in $\P$. Note that if $\unit_1 \otimes_2 x \in \P$, then $x \otimes_2 \unit_1$ is in $\P$ by Corollary \ref{cor:either-side-equal}. Either way, we have $x \otimes_2 \unit_1$ or $y \otimes_2 \unit_1 \in \P$, i.e.~ $x$ or $y$ is in $\eta(\P)$; hence $\eta(\P)$ is prime. Note that $\unit_1$ cannot be in $\eta(\P)$, since if $\unit_1 \otimes_2 \unit_1 \in \P$ then $\unit_1 \otimes_1 \unit_1 \in \P$ (again using Corollary \ref{cor:ideal-both-equal}), in other words, $\unit_1 \in \P$, which would imply $\P = \K_1$. Therefore, $\eta(\P) \in \supp_2(\unit_1)$, and the maps given in the statement of the theorem are well defined. 

Note that $\zeta$ is continuous: if $\mS_1$ is a collection of objects in $\K_1$, then
\begin{align*}
    \zeta^{-1}(\supp_1(\mS_1)) &=\{ \Q \in \supp_2(\unit_1) \mid \Q \cap \K_1 \cap \mS_1 = \varnothing\}\\
    &=\supp_2(\unit_1) \cap \supp_2(\mS_1)
\end{align*}
which is closed in $\Spc (\K_2).$ On the other hand, $\eta$ is also continuous: if $\mS_2$ is a collection of objects in $\K_2$, then
\begin{align*}
\eta^{-1}(\supp_2(\mS_2)) &= \{ \P \in \Spc (\K_1) \mid \eta(\P) \cap \mS_2 = \varnothing\}\\
&= \{ \P \in \Spc (\K_1) \mid x \otimes_2 \unit_1 \not \in \P\;\; \forall \;\; x \in \mS_2\}\\
&= \supp_1 ( \{x \otimes_2 \unit_1 \mid x \in \mS_2\}),
\end{align*}
which is closed in $\Spc (\K_1)$. Hence $\eta$ and $\zeta$ are both continuous.

Lastly, we check that $\eta$ and $\zeta$ are inverse to one another:
for all $\P \in \Spc (\K_1)$,
\begin{align*}
    (\zeta \circ \eta)(\P) &=\{x \in \K_2 \mid x \otimes_2 \unit_1 \in \P\} \cap \K_1\\
    &= \{x \in \K_1 \mid x \otimes_2 \unit_1 \in \P\}\\
    &= \{x \in \K_1 \mid x \otimes_1 \unit_1 \in \P\}\\
    &=\{ x \in \K_1 \mid x \in \P\}\\
    &= \P ,
\end{align*}
where the third equality holds by Corollary \ref{cor:ideal-both-equal}. On the other hand,
\begin{align*}
   (\eta \circ \zeta)(\Q) &= \{x \in \K_2 \mid x \otimes_2 \unit_1 \in \Q \cap \K_1\} \\
   &= \{x \in \K_2 \mid x \otimes_2 \unit_1 \in \Q \}\\
   &= \{x \in \K_2 \mid x \in \Q \text{ or } \unit_1 \in \Q\}\\
   &=\{x \in \K_2\mid x \in \Q\}\\
   &= \Q
\end{align*}
for all $\Q \in \supp_2(\unit_1)$ (the third equality uses the fact that $\Q$ is prime in $\K_2$, and the fourth uses the fact that $\Q \in \supp_2(\unit_1)$, that is, $\unit_1 \not \in \Q$). This completes the proof. 
\end{proof}

Recall from Section \ref{subsect:catcenter} that there are continuous maps
\[
\rho_i\colon \Spc (\K_i) \to \Spech (\cohom(\K_i))
\]
where $\cohom(\K_i)$ is the cohomology ring of $\K_i$. (Since both $\K_1$, $\K_2$ are assumed to be generated by their respective tensor units, 
we may take $\ccent (\K_i) = \cohom (\K_i)$ for each.)
There is a homomorphism of graded rings
\begin{align*}
    \cohom(\K_2 ) &\to \cohom(\K_1)\\
    f & \mapsto f \otimes_2 1_{\unit_1},
\end{align*}
since for $f\colon \unit_2 \to \Sigma^i \unit_2$ a homogeneous element of $\cohom(\K_2)$ we can view $f \otimes_2 1_{\unit_1}$ as a map $\unit_1 \to \Sigma^i \unit_1$ using the isomorphisms
\[
\unit_2 \otimes_2 \unit_1 \cong \unit_1, \quad (\Sigma^i \unit_2) \otimes_2 \unit_1 \cong \Sigma^i( \unit_2 \otimes_2 \unit_1) \cong \Sigma^i \unit_1.
\] 
This induces a continuous map
\begin{align}
\label{map1}
   \iota \colon  \Spech(\cohom(\K_1)) & \to \Spech (\cohom(\K_2))\\
    \mfp & \mapsto \{f \in \cohom(\K_2) \mid f \otimes_2 1_{\unit_1} \in \mfp \}
\end{align}
for homogeneous prime ideals $\mfp$ of $\cohom(\K_1)$.

\begin{prop}
\label{prop:rho-diag}
    The diagram
\begin{center}
    \begin{tikzcd}
\Spc (\K_1)  \arrow[d, "\eta"] \arrow[r, "\rho_1"] & \Spech(\cohom(\K_1)) \arrow[d, "\iota"] \\
\Spc (\K_2) \arrow[r, "\rho_2"]                              & \Spech(\cohom(\K_2))          
\end{tikzcd}
\end{center}
commutes, where the left vertical map is the inclusion induced by the map $\eta$ from Proposition~\ref{prop:support-subcat-spc}, and the right vertical map is the map $\iota$ from (\ref{map1}).
\end{prop}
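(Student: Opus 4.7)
My plan is to unwind the definitions of both composites and reduce the commutativity to a single cone-comparison isomorphism. Fix $\P \in \Spc(\K_1)$ and let $g \colon \unit_2 \to \Sigma^i \unit_2$ be an arbitrary homogeneous element of $\cohom(\K_2)$. I will use the standard Balmer-type characterization that $f \in \rho_i(\Q)$ if and only if $\cone(f) \notin \Q$, which is valid here because both $\ccent(\K_i) = \cohom(\K_i)$ are graded commutative in this unipotent setting, so the generated ideal in the definition of $\rho$ coincides with its generating set. Combining this characterization with the definitions of $\eta$ and $\iota$, the two composites evaluated at $g$ become the set-membership tests
\[
g \in \iota(\rho_1(\P)) \iff \cone_{\K_1}(g \otimes_2 1_{\unit_1}) \notin \P, \qquad g \in \rho_2(\eta(\P)) \iff \cone_{\K_2}(g) \otimes_2 \unit_1 \notin \P.
\]

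It then suffices to produce an isomorphism $\cone_{\K_1}(g \otimes_2 1_{\unit_1}) \cong \cone_{\K_2}(g) \otimes_2 \unit_1$ in $\K_2$; both objects will then lie in $\K_1$ by thickness. To exhibit this isomorphism, I begin with the distinguished triangle $\unit_2 \xrightarrow{g} \Sigma^i \unit_2 \to \cone_{\K_2}(g) \to \Sigma \unit_2$ in $\K_2$ and apply the functor $- \otimes_2 \unit_1$, which is exact since $\K_2$ is a \mtc. Using the unit and suspension-tensor compatibility isomorphisms of $\K_2$ to identify $\unit_2 \otimes_2 \unit_1 \cong \unit_1$ and $(\Sigma^i \unit_2) \otimes_2 \unit_1 \cong \Sigma^i \unit_1$, the resulting distinguished triangle in $\K_2$ reads
\[
\unit_1 \xrightarrow{g \otimes_2 1_{\unit_1}} \Sigma^i \unit_1 \to \cone_{\K_2}(g) \otimes_2 \unit_1 \to \Sigma \unit_1.
\]
On the other hand, taking a cone of $g \otimes_2 1_{\unit_1}$ in $\K_1$ produces a distinguished triangle of $\K_1$ which, since $\K_1 \subseteq \K_2$ is a full triangulated subcategory, is also distinguished in $\K_2$ with the same first two terms. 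Uniqueness of cones up to (non-canonical) isomorphism in $\K_2$ then yields the required isomorphism.

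The main obstacle is essentially bookkeeping rather than conceptual: I need to be careful about which ambient category hosts each cone, and I need to justify the passage from the generated-ideal form of $\rho_i$ in Section~\ref{subsect:catcenter} to the set-theoretic form used above. The latter reduces to the classical octahedral-axiom argument of Balmer, showing that for a graded commutative cohomology ring the set $\{f \text{ homogeneous} \mid \cone(f) \notin \P\}$ is already a graded prime ideal. Once that reduction is in place, the rest of the argument is the clean cone comparison, which rests on the exactness of $- \otimes_2 \unit_1$ in $\K_2$ together with the fact that the inclusion $\K_1 \hookrightarrow \K_2$ preserves distinguished triangles.
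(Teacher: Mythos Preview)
Your proof is correct and follows essentially the same route as the paper. Both arguments reduce the commutativity to the identification $\cone(f \otimes_2 1_{\unit_1}) \cong \cone(f) \otimes_2 \unit_1$, which the paper dispatches with the phrase ``tensor product is a triangulated functor'' and which you unpack more carefully using exactness of $-\otimes_2 \unit_1$ together with the fact that $\K_1 \hookrightarrow \K_2$ is a full triangulated subcategory. You are also more explicit than the paper about one point that the paper leaves implicit: the passage from the generated-ideal definition of $\rho_i$ to the elementwise test ``$f \in \rho_i(\Q) \iff \cone(f) \notin \Q$'' for homogeneous $f$. The paper's step from its second to its third displayed line in the computation of $\iota(\rho_1(\P))$ tacitly uses exactly this fact, so your invocation of Balmer's octahedral argument is not extra baggage but a justification the paper's proof also requires.
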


\begin{proof}
    Let $\P \in \Spc (\K_1)$. By traversing the diagram first right and then down, we compute
\begin{align*}
    \iota(\rho_1(\P)) &=\iota( \langle g \in \cohom(\K_1) \text{ homogeneous}\mid \cone(g) \not \in \P \rangle)\\
    &=\langle f \in \cohom(\K_2) \text{ homogeneous}\mid f \otimes_2 1_{\unit_1} \in \langle g \in \cohom(\K_1) \text{ homogeneous}\mid \cone(g) \not \in \P \rangle\rangle\\
    &=\langle f \in \cohom(\K_2) \text{ homogeneous}\mid \cone(f \otimes_2 1_{\unit_1}) \not \in \P\rangle\\
    &=\langle f \in \cohom(\K_2) \text{ homogeneous}\mid \cone(f) \otimes_2 \unit_1 \not \in \P\rangle. 
\end{align*}
The last equality follows because tensor product is a triangulated functor. On the other hand, traversing the diagram first down and then right yields
\begin{align*}
    \rho_2(\eta(\P))&=\rho_2(\{x \in \K_2 \mid x \otimes_2 \unit_1 \in \P\})\\
    &= \langle f \in \cohom(\K_2) \text{ homogeneous} \mid \cone(f) \not \in \{x \in \K_2 \mid x \otimes_2 \unit_1 \in \P\} \rangle \\
    &=\langle f \in \cohom(\K_2) \text{ homogeneous}\mid \cone(f) \otimes_2 \unit_1 \not \in \P\rangle.
    \end{align*}
\end{proof}

\subsection{Conservative right tensor-invertible actions on unipotent tensor categories}
\label{subsect:unip-f}

In this section, we assume that $\widehat{\A}$ and $\widehat{\K}$ are two rigidly-compactly generated monoidal triangulated categories with respective compact parts $\A$ and $\K$, that there is an action of $\widehat{\K}$ on $\widehat{\A}$, and that $\A = \E(\A)$, i.e.~ $\A$ is unipotent. In subsequent sections, we will apply these results to the specific cases where $\A$ is $\umod(A)$ for $A$ a finite dimensional unipotent Hopf algebra, and $\K$ is either $\ulrp(A^{\env})$ or its shell $\E$. 

 Throughout this section, the following two properties of actions will be important.

\begin{defn}
\label{defn:faithful-right-inv}
The action of $\widehat{\K}$ on $\widehat{\A}$ is 
\begin{enumerate}
\item {\emph{conservative}} 
if $x * \unit \cong 0$ implies $x \cong 0$ for all $x \in \widehat{\K}$, and
\item {\emph{right invertible}} if the functor $\K \to \A$ defined by $x \mapsto x* \unit$ for all $x \in \K$ has a right inverse $\A \to \K$, and {\emph{right tensor-invertible}} if it is right invertible such that the right inverse is a monoidal functor. 
\end{enumerate}
\end{defn}

We will typically denote by $G$ the functor $\widehat{\K} \to \widehat{\A}$ defined by $x \mapsto x*\unit$. By abuse of notation, we often use $G$ as well to denote this functor when restricted to $\K$. The condition that $G$ is right tensor-invertible is that there is a monoidal functor $F\colon \A \to \K$ 
with $GF \cong 1_{\A}$. Note that we do not require $G$ itself to be a monoidal functor. 

We first show that under sincerity, thick right ideals of $\K$ can be recovered from their images under $G$.  

\begin{prop}
    \label{prop:recover-thick-t}
Assume that the action of $\widehat{\K}$ on $\widehat{\A}$ is conservative. Let $\I$ be a thick right ideal of $\K$. Suppose $x \in \K$ satisfies $x*\unit \in \Thick(y* \unit \mid y \in \I).$ Then $x \in \I.$
\end{prop}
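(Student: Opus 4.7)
The plan is to use the Rickard idempotent $L_\I \colon \widehat{\K} \to \widehat{\K}$ associated to the thick right ideal $\I$, together with sincerity, to show that $L_\I(x) \cong 0$; membership $x \in \I$ will then follow from the standard identification $\Loc(\I) \cap \K = \I$ in a rigidly-compactly generated setting. The right-ideal hypothesis is crucial here, because it is precisely what supplies the formula $L_\I(-) \cong L_\I(\unit) \otimes -$ on $\K$ from Theorem~\ref{thm:Rickard-props}(b), which lets us translate the assumption on $G(x) = x * \unit$ into a statement about $L_\I(x)$.

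The key step is to observe that $L_\I(\unit)$ annihilates the entire thick subcategory $\Thick(G(y) \mid y \in \I)$ of $\widehat{\A}$ under the action. For any $y \in \I$, Theorem~\ref{thm:Rickard-props}(a) gives $L_\I(y) \cong 0$, and then associativity of the action combined with Theorem~\ref{thm:Rickard-props}(b) yields
\[
L_\I(\unit) * G(y) \;\cong\; L_\I(\unit) * (y * \unit) \;\cong\; (L_\I(\unit) \otimes y) * \unit \;\cong\; L_\I(y) * \unit \;\cong\; 0.
\]
The functor $L_\I(\unit) * (-) \colon \widehat{\A} \to \widehat{\A}$ is triangulated, so its kernel is a thick subcategory of $\widehat{\A}$; since it contains every $G(y)$ with $y \in \I$, it must contain all of $\Thick(G(y) \mid y \in \I)$, and in particular $L_\I(\unit) * G(x) \cong 0$ by hypothesis.

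To finish, reverse the associativity step and read this vanishing as $(L_\I(\unit) \otimes x) * \unit \cong 0$. Sincerity of the action of $\widehat{\K}$ on $\widehat{\A}$ then forces $L_\I(\unit) \otimes x \cong 0$, and Theorem~\ref{thm:Rickard-props}(b) reidentifies the left-hand side as $L_\I(x)$. Theorem~\ref{thm:Rickard-props}(a) now places $x$ in $\Loc(\I)$, and compactness of $x$ together with rigid-compact generation of $\widehat{\K}$ yields $x \in \Loc(\I) \cap \K = \I$. The only step that meaningfully uses the ambient compact-generation hypothesis is this last equality (a routine application of Neeman's theorem to the thick subcategory $\I$ of $\K$); everything else is a bookkeeping manipulation of associativity of the action, the formula for $L_\I$ on a right ideal, and the thickness of the kernel of a triangulated functor.
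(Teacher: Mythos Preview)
Your proof is correct and follows essentially the same route as the paper: both show $L_\I(\unit)*(-)$ kills every $G(y)$ with $y\in\I$, hence kills $G(x)$ by thickness of the kernel, then invoke sincerity to get $L_\I(x)\cong 0$. You are slightly more explicit about the final identification $\Loc(\I)\cap\K=\I$ via Neeman's theorem, which the paper absorbs into its invocation of Theorem~\ref{thm:Rickard-props}(a).
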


\begin{proof}
By Theorem \ref{thm:Rickard-props}(a), it suffices to show that $L_{\I} x \cong 0$.
  Note that $L_{\I}x \cong L_{\I} \unit \otimes x$ by Theorem \ref{thm:Rickard-props}(b). We have $L_{\I} \unit \otimes y \cong L_{\I} y \cong 0$ for all $y \in \I$, and therefore $L_{\I} \unit * (y * \unit) \cong 0$ for all $y \in \I$. 
This implies that $L_{\I} \unit *(x * \unit) \cong (L_{\I} \unit \otimes x)* \unit \cong 0$
since $x * \unit$ is in the thick subcategory generated by $y*\unit$, over all $y \in \I$.
By the sincerity assumption, $L_{\I} \unit \otimes x \cong L_{\I} x \cong 0$, and hence $x \in \I$. 
\end{proof}

\begin{cor}
\label{cor:right-det-by-e}
Assume that the action of $\widehat{\K}$ on $\widehat{\A}$ is conservative and right invertible with right inverse $F$. Let $\I$ be a thick right ideal of $\K$. Then
\[
\I = \langle F(y*\unit) \mid y \in \I \rangle_r.
\]
\end{cor}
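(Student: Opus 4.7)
The plan is to write $\J := \langle F(y * \unit) \mid y \in \I \rangle_r$ and show $\I = \J$ by two applications of Proposition \ref{prop:recover-thick-t}, each leveraging the right-inverse identity $GF \cong 1_{\A}$, where $G \colon \K \to \A$ denotes the functor $x \mapsto x * \unit$.

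For the inclusion $\J \subseteq \I$, since $\I$ is itself a thick right ideal, it suffices to check that each generator $F(y * \unit)$ of $\J$ lies in $\I$. To verify this via Proposition \ref{prop:recover-thick-t} applied to the object $x = F(y * \unit)$, the hypothesis to check is $x * \unit \in \Thick(z * \unit \mid z \in \I)$. The relation $GF \cong 1_{\A}$ gives $F(y * \unit) * \unit = G(F(y * \unit)) \cong y * \unit$, and since $y \in \I$ this is a member of the generating family, hence trivially in the thick subcategory it generates.

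For the reverse inclusion $\I \subseteq \J$, I reapply Proposition \ref{prop:recover-thick-t}, now with $\J$ playing the role of $\I$. Given $x \in \I$, it suffices to show $x * \unit \in \Thick(z * \unit \mid z \in \J)$. The natural candidate $z := F(x * \unit)$ lies in $\J$ by the very definition of the generating set, and $GF \cong 1_{\A}$ yields $z * \unit \cong x * \unit$, which places $x * \unit$ in the generating family itself. Proposition \ref{prop:recover-thick-t} then concludes $x \in \J$.

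In short, the proposition does all the real work: right invertibility supplies, for each object of the form $x * \unit$, a canonical lift to $\K$ lying in any right ideal that contains $x$, while sincerity (packaged in Proposition \ref{prop:recover-thick-t}) lets membership in a right ideal be detected from $G$-images alone. Once the proposition is in hand there is no substantive obstacle; the corollary is a direct formal consequence, and I expect the proof to be only a few lines.
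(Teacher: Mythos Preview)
Your proof is correct and follows essentially the same approach as the paper: both containments are verified by applying Proposition~\ref{prop:recover-thick-t} together with the identity $GF \cong 1_{\A}$. The paper only spells out the direction $\I \subseteq \J$ explicitly (with the same choice $z = F(x * \unit)$ you make) and leaves the other containment as analogous; you have written out both.
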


\begin{proof}
  Both containments are readily verified using Proposition \ref{prop:recover-thick-t} and right invertibility. 
  For instance, to show that
    \[
    \I \subseteq \langle F(y*\unit)\mid y \in \I\rangle_r =: \I'',
    \]
    it is enough by Proposition \ref{prop:recover-thick-t} to show that if $x \in \I$, then $x* \unit \in \Thick(y * \unit \mid y \in \I'').$ Indeed, if $x \in \I$, then $z:=F(x * \unit) \in \I''$, and 
   $
    x * \unit \cong F(x* \unit) * \unit=z*\unit
   $
    since $F$ is a right inverse to $G$. 
\end{proof}

In view of classifying $\K$-submodule categories of $\A$ using the functor $G$, we next note a useful lemma which uses the unipotence of $\A$ in a critical way. This lemma does not require the action to be conservative or right invertible. 

\begin{lem}
    \label{lem:ginv-ideal}
     Let $\J$ be a thick subcategory of $\A$. Then $G^{-1}(\J)$ is a thick right ideal of $\K$. If $\J$ is a thick $\K$-submodule category, then $G^{-1}(\J)$ is a thick two-sided ideal of $\K$.
\end{lem}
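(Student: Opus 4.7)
The plan is to verify the three required closure properties of $G^{-1}(\J)$ (thickness, closure under right tensoring, and, under the stronger hypothesis, closure under left tensoring) by unwinding the definition of $G$ and exploiting the unipotence of $\A$ in the one place where it is needed.

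First I would observe that $G = -*\unit \colon \K \to \A$ is a triangulated functor, since the action is exact in its first variable; as $\J$ is a thick subcategory of $\A$, its preimage $G^{-1}(\J)$ is automatically a thick subcategory of $\K$.

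Next, for the right ideal property, let $x \in G^{-1}(\J)$ and $y \in \K$. Using associativity of the action we have
\[
(x \otimes y) * \unit \;\cong\; x * (y * \unit),
\]
so it suffices to show $x * (y*\unit) \in \J$. Here is where unipotence enters: consider the triangulated functor $x * - \colon \A \to \A$ and the full subcategory
\[
\mathcal{U} \;=\; \{\, a \in \A \mid x * a \in \J \,\}.
\]
Since $x * - $ is triangulated and $\J$ is thick in $\A$, $\mathcal{U}$ is a thick subcategory of $\A$. It contains $\unit$ because $x * \unit = G(x) \in \J$ by hypothesis. But $\A = \E(\A) = \Thick(\unit)$, so $\mathcal{U} = \A$; in particular $y*\unit \in \mathcal{U}$, giving $x*(y*\unit) \in \J$ as required.

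Finally, if $\J$ is a thick $\K$-submodule of $\A$, then for $x \in G^{-1}(\J)$ and $y \in \K$ we have
\[
(y \otimes x) * \unit \;\cong\; y * (x * \unit),
\]
and $x*\unit \in \J$ combined with the $\K$-submodule property gives $y*(x*\unit) \in \J$, so $y \otimes x \in G^{-1}(\J)$. Together with the right ideal property already established, this shows $G^{-1}(\J)$ is a two-sided thick ideal. The one substantive step is the use of unipotence in the right-ideal argument; the rest is routine functorial bookkeeping.
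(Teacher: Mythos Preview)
Your proof is correct and follows essentially the same route as the paper's own argument: the paper also defines the auxiliary thick subcategory $\{a \in \A \mid x*a \in \J\}$, observes it contains $\unit$, and invokes unipotence of $\A$ to conclude it is all of $\A$, with the left-ideal step handled identically via $(y\otimes x)*\unit \cong y*(x*\unit)$. There is nothing to add.
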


\begin{proof}
First, suppose that $\J$ is a thick subcategory of $\A$ and $x \in G^{-1}(\J)$. Let $y \in \K$; we claim that $x \otimes y \in G^{-1}(\J)$, that is, $G(x \otimes y) \cong x *G(y) \in \J$. Consider the collection of objects
\[
\J':=\{a \in \A \mid x * a \in \J \}. 
\]
We know that $\unit \in \J'$, since $G(x) \in \J$ by assumption. But $\J'$ is a thick subcategory of $\A$, since the action is a triangulated functor and $\J$ is a thick subcategory. Since $\Thick(\unit)= \A$, it follows that $\J' = \A$, that is, $x*a \in \J$ for all $a \in \A$, and hence $x*G(y) \in \J$. Therefore, $G^{-1}(\J)$ is indeed a right ideal. 

Now assume $\J$ is a thick submodule category. If $x \in G^{-1}(\J)$ and $y \in \K$, then 
\[
G(y \otimes x)=(y \otimes x)* \unit \cong y *(x*\unit) = y*G(x).
\]
Since $G(x) \in \J$ and $\J$ is a $\K$-submodule category, it follows that $y \otimes x \in G^{-1}(\J)$, and so $G^{-1}(\J)$ is a left ideal, hence two-sided. 
\end{proof}

We are now ready to state the correspondence between thick ideals of $\K$ and thick $\K$-submodules of $\A$.

\begin{prop}
    \label{prop:ideal-bij-spc-general}
    Assume that the action of $\widehat{\K}$ on $\widehat{\A}$ is conservative and right invertible. Then $G$ induces a bijection between thick ideals of $\K$ and thick $\K$-submodule categories of $\A$ via the maps
    \begin{center}
        \begin{tikzcd}
\I \arrow[rr, maps to]                              &  & G(\I)                            \\
\Ideals(\K) \arrow[rr, bend left] &  & \Submods_{\K}(\A) \arrow[ll, bend left] \\
G^{-1}(\J)                                          &  & \J \arrow[ll, maps to]                             
\end{tikzcd}
    \end{center}
\end{prop}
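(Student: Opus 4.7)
The plan is to verify well-definedness of both assignments and then show they are mutual inverses. By \lemref{lem:ginv-ideal}, $G^{-1}(\J)$ is automatically a thick two-sided ideal of $\K$ whenever $\J$ is a thick $\K$-submodule of $\A$. For the opposite direction, I interpret $G(\I)$ as the thick subcategory $\Thick(G(y) \mid y \in \I)$ of $\A$; the first task is to show that this is a $\K$-submodule of $\A$.

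To see that $G(\I)$ is stable under the $\K$-action, fix $x \in \K$ and consider
\[
\J_x := \{a \in \A \mid x * a \in G(\I)\}.
\]
This is thick because $x * -$ is exact and $G(\I)$ is thick. For each $y \in \I$, the two-sided ideal property gives $x \otimes y \in \I$, and the associator yields
\[
x * G(y) \;=\; x * (y * \unit) \;\cong\; (x \otimes y) * \unit \;=\; G(x \otimes y) \;\in\; G(\I).
\]
Hence $\J_x$ contains every generator of $G(\I)$, so $\J_x \supseteq G(\I)$, proving $x * G(\I) \subseteq G(\I)$.

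For the bijection, the inclusion $\I \subseteq G^{-1}(G(\I))$ is automatic. For the reverse, suppose $x \in G^{-1}(G(\I))$, so that $x * \unit \in \Thick(y * \unit \mid y \in \I)$. Since $\I$ is a two-sided, hence right, ideal and the action is sincere, \propref{prop:recover-thick-t} forces $x \in \I$. The inclusion $G(G^{-1}(\J)) \subseteq \J$ is clear, since each generator of the left side lies in $\J$ by construction and $\J$ is thick. Conversely, for $a \in \J$, right invertibility supplies $F(a) \in \K$ with $G(F(a)) \cong a \in \J$, so $F(a) \in G^{-1}(\J)$ and therefore $a \cong G(F(a)) \in G(G^{-1}(\J))$.

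The decisive step is the containment $G^{-1}(G(\I)) \subseteq \I$, which rests entirely on sincerity through \propref{prop:recover-thick-t}; sincerity is what allows the Rickard idempotent $L_{\I}\unit$ to detect membership in $\I$ after testing against $\unit$. Right invertibility plays the complementary role of providing essential surjectivity for $G$, so that every submodule $\J$ lies in the image of the first map. Notably, only the existence of a functorial right inverse $F$ is used, and the proof proceeds without invoking monoidality of $F$.
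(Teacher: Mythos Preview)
Your proof is correct and follows the same route as the paper's: well-definedness via \lemref{lem:ginv-ideal} and the associator identity $x*G(y)\cong G(x\otimes y)$, then mutual inverseness via \propref{prop:recover-thick-t} for $G^{-1}(G(\I))\subseteq\I$ and right invertibility for $\J\subseteq G(G^{-1}(\J))$. You are more explicit than the paper in taking $G(\I)=\Thick(G(y)\mid y\in\I)$ and in extending the $\K$-stability from generators to the whole thick closure via the auxiliary subcategory $\J_x$; the paper's proof checks stability only on generators, so your extra care is appropriate.
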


\begin{proof}
Let $\I$ be a thick ideal of $\K$ and $x \in \I$. Note that since 
\[
y* G(x) = y*(x*\unit) \cong (y \otimes x)*\unit = G(y \otimes x)
\]
for any $y \in \K$, and since $y \otimes x \in \I$, it follows that $G(\I)$ is a $\K$-submodule category of $\A$. On the other hand, if $\J$ is a $\K$-submodule category of $\A$, then $G^{-1}(\J)$ is a two-sided ideal of $\K$ by Lemma \ref{lem:ginv-ideal}. Hence the maps in the proposition are well defined. 

If $\J$ is a thick $\K$-submodule category of $\A$, then clearly $G(G^{-1}(\J)) \subseteq \J$. On the other hand, if $a \in \J$ and if we denote the right inverse of $G$ by $F$, then since $F(a) \in G^{-1}(\J)$, it is clear that $G(G^{-1}(\J)) = \J$.

Let $\I$ be a two-sided thick ideal of $\K$. Clearly $\I \subseteq G^{-1}(G(\I))$. For the opposite containment, suppose that $G(x) \in G(\I)$. Since $\I$ is in particular a right ideal, by Proposition \ref{prop:recover-thick-t}, it follows that $x \in \I$, and so $G^{-1}(G(\I)) = \I.$
\end{proof}

\begin{cor}
\label{cor:spck-vs-spca-unip}
    Assume that the action of $\widehat{\K}$ on $\widehat{\A}$ is conservative and right invertible. If $\K = \E(\K)$, then $\Spc (\K) \cong \Spc (\A)$ via the map of Proposition \ref{prop:ideal-bij-spc-general}.
\end{cor}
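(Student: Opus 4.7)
The plan is to upgrade the bijection of ideals from Proposition~\ref{prop:ideal-bij-spc-general} to a homeomorphism of Balmer spectra, exploiting the fact that both $\K$ and $\A$ are unipotent rigid monoidal triangulated categories.

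First I would identify the correct bijection of ideals. Under the hypothesis $\K = \E(\K)$, Lemma~\ref{cor:propofmathcalE}(b) implies that every thick subcategory of $\A$ is automatically a $\K$-submodule category, because the $\E(\K)$-module structure on any thick subcategory coincides with the $\K$-module structure. Combined with the standing assumption $\A = \E(\A)$ and Lemma~\ref{cor:propofmathcalE}(a), every thick subcategory of $\A$ is moreover a thick ideal of $\A$. Consequently Proposition~\ref{prop:ideal-bij-spc-general} restricts to an inclusion-preserving bijection between $\Ideals(\K)$ and $\Ideals(\A)$.

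Next I would show this bijection restricts further to one between prime ideals. In any unipotent rigid monoidal triangulated category, every prime is completely prime by Lemma~\ref{cor:propofmathcalE}(c), and $\I\otimes\J = \I\cap\J$ for any two thick ideals by Lemma~\ref{lem:tensor-int}. Together these yield the purely lattice-theoretic criterion: an ideal $\P$ is prime if and only if $\I\cap\J\subseteq\P$ forces $\I\subseteq\P$ or $\J\subseteq\P$. This criterion is preserved by any bijection of ideal lattices that preserves inclusion and binary intersection, so it suffices to check that $G(\I_1\cap\I_2) = G(\I_1)\cap G(\I_2)$. The containment $\subseteq$ is immediate; for $\supseteq$, given $a\in G(\I_1)\cap G(\I_2)$, the relation $a\cong GF(a)$ together with $a\in\Thick(G(y)\mid y\in\I_i)$ for each $i$ lets me apply Proposition~\ref{prop:recover-thick-t} to conclude $F(a)\in\I_i$, so $F(a)\in\I_1\cap\I_2$ maps to $a$.

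Finally I would verify continuity in both directions by tracking subbasic closed sets. For $x\in\K$, the equivalence $x\in\P\iff G(x)\in G(\P)$ holds: the forward direction is trivial, and the reverse follows from Proposition~\ref{prop:recover-thick-t} since the prime $\P$ is in particular a right ideal. Hence $\supp_\K(x)$ corresponds to $\supp_\A(G(x))$ under the bijection. For $a\in\A$, the isomorphism $GF\cong 1_\A$ gives $a\in G(\P)\iff F(a)\in\P$, so $\supp_\A(a)$ corresponds to $\supp_\K(F(a))$. Since the Balmer topologies are generated by these subbases, the bijection is a homeomorphism.

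The main obstacle, and really the only substantive step, is proving that the ideal bijection preserves binary intersections. This is where the sincerity hypothesis gets used, through Proposition~\ref{prop:recover-thick-t}, which allows membership in a right ideal to be detected from the image under $G$. Everything else is a formal consequence of unipotence and rigidity.
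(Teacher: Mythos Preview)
Your proof is correct and follows essentially the same approach as the paper: reduce the bijection of ideals to a bijection of primes via the lattice-theoretic criterion $\I\cap\J\subseteq\P\Rightarrow\I\subseteq\P$ or $\J\subseteq\P$ (which comes from Lemma~\ref{lem:tensor-int}), and then check continuity by tracking subbasic closed sets. The paper packages the spectral step into the separate Proposition~\ref{prop:ideals-homeo}, but the content is the same.

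One minor efficiency point: you work to show $G(\I_1\cap\I_2)=G(\I_1)\cap G(\I_2)$ using Proposition~\ref{prop:recover-thick-t}, whereas the paper uses the inverse map $G^{-1}$, for which $G^{-1}(\J_1\cap\J_2)=G^{-1}(\J_1)\cap G^{-1}(\J_2)$ is automatic. Since the bijection of Proposition~\ref{prop:ideal-bij-spc-general} is already an order isomorphism, intersection-preservation in either direction is in fact formal; your explicit check is correct but not strictly needed. Also, your invocation of complete primeness (Lemma~\ref{cor:propofmathcalE}(c)) is harmless but unnecessary for the intersection criterion, which follows from Lemma~\ref{lem:tensor-int} alone.
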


\begin{proof}
    Indeed, in this case all thick subcategories of $\K$ are thick ideals, and all thick subcategories of $\A$ are thick submodule categories, so that Proposition \ref{prop:ideal-bij-spc-general} gives a bijection between the thick ideals of $\K$ and the thick ideals of $\A$. The result now follows from the more general Proposition \ref{prop:ideals-homeo} below, which we record for future reference.
\end{proof}

\begin{prop}
    \label{prop:ideals-homeo}
    Let $G' \colon \K' \to \A'$ be a (not necessarily monoidal) triangulated functor between rigid monoidal triangulated categories that induces a bijection on thick ideals via
    \begin{center}
        \begin{tikzcd}
\I \arrow[rr, maps to]                               &  & \langle G'(x) \mid x \in \I\rangle                              \\
\Ideals(\K') \arrow[rr, bend left] &  & \Ideals(\A') \arrow[ll, bend left] \\
G'^{-1}(\J)                                           &  & \J \arrow[ll, maps to]                              
\end{tikzcd}
\end{center}
Then these maps restricted to $\Spc (\A')$ and $\Spc (\K')$ define inverse homeomorphisms. 
\end{prop}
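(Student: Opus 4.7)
The plan is to use the hypothesized ideal bijection, restrict it to primes, and then verify that the restriction is a homeomorphism. Write $\alpha\colon \I \mapsto \langle G'(x) \mid x \in \I\rangle$ and $\beta\colon \J \mapsto G'^{-1}(\J)$; by hypothesis these are mutually inverse bijections between $\Ideals(\K')$ and $\Ideals(\A')$. First I would observe that $\beta$ preserves inclusions and arbitrary intersections (standard preimage properties), and therefore so does $\alpha = \beta^{-1}$. Invoking Lemma \ref{lem:tensor-int}, which identifies $\otimes$ with $\cap$ on thick ideals in a rigid \mtc, both maps also preserve the tensor product of ideals. Primes then correspond: if $\P \in \Spc(\K')$ and $\J_1 \otimes \J_2 \subseteq \alpha(\P)$, applying $\beta$ gives $\beta(\J_1) \otimes \beta(\J_2) \subseteq \P$, so by primality $\beta(\J_i) \subseteq \P$ for some $i$, whence $\J_i \subseteq \alpha(\P)$; the symmetric argument handles $\beta$.

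Continuity of $\beta$ is immediate: for any $x \in \K'$,
\[
\beta^{-1}(\supp_{\K'}(x)) = \{\J \in \Spc(\A') \mid G'(x) \notin \J\} = \supp_{\A'}(G'(x)),
\]
which is a basic closed set.

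The main obstacle will be showing $\beta$ is a closed map, equivalently that $\alpha = \beta^{-1}$ is continuous. The naive description
\[
\beta(\supp_{\A'}(y)) = \{\P \in \Spc(\K') \mid \beta(\langle y\rangle) \not\subseteq \P\}
\]
realizes this only as a (possibly infinite) union of basic closed sets, and such unions are not automatically closed in a Balmer spectrum. The key step would be to show that $\beta(\langle y\rangle) = G'^{-1}(\langle y\rangle)$ is in fact principal as a thick ideal of $\K'$. Since $\alpha(\beta(\langle y\rangle)) = \langle y\rangle$, we have $y \in \langle G'(x) \mid x \in \beta(\langle y\rangle)\rangle$, so Lemma \ref{fin-gen} produces finitely many $x_1,\dots,x_n \in \beta(\langle y\rangle)$ with $y \in \langle G'(x_1),\dots,G'(x_n)\rangle_{\A'}$. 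Setting $x := x_1 \oplus \cdots \oplus x_n$, the containment $x \in \beta(\langle y\rangle)$ gives $\langle x\rangle \subseteq \beta(\langle y\rangle)$, while applying $\beta$ to $\langle y\rangle \subseteq \alpha(\langle x\rangle)$ gives the reverse containment, so $\beta(\langle y\rangle) = \langle x\rangle$. Consequently $\beta(\supp_{\A'}(y)) = \supp_{\K'}(x)$ is basic closed.

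A general closed set of $\Spc(\A')$ has the form $C = \bigcap_{y \in \mS} \supp_{\A'}(y)$, and bijectivity of $\beta$ preserves this intersection, giving $\beta(C) = \bigcap_{y \in \mS} \supp_{\K'}(x_y)$, which is closed. Combining everything, $\beta$ is a continuous closed bijection, hence a homeomorphism with inverse $\alpha$.
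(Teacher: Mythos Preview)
Your proof is correct. The primality correspondence is handled exactly as in the paper, via the intersection reformulation of primeness supplied by Lemma~\ref{lem:tensor-int} together with the fact that $\beta$ (being a preimage map) preserves intersections and its inverse $\alpha$ therefore does too.

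Where you go beyond the paper is in the topology step. The paper simply asserts that continuity is ``straightforward to check'' from the fact that the ideal bijection is containment-preserving. Your proposal is more honest about the asymmetry: continuity of $\beta$ is indeed immediate from $\beta^{-1}(\supp_{\K'}(x)) = \supp_{\A'}(G'(x))$, but continuity of $\alpha$ (equivalently, closedness of $\beta$) is not, since a priori $\beta(\supp_{\A'}(y))$ is only the complement of the primes containing the ideal $G'^{-1}(\langle y\rangle)$, which could fail to be principal. Your use of Lemma~\ref{fin-gen} to show that $G'^{-1}(\langle y\rangle)$ is actually principal, by extracting a finite subfamily from the generators of $\alpha(\beta(\langle y\rangle)) = \langle y\rangle$, is the genuine content needed here and is a detail the paper leaves implicit. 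Your argument is therefore a valid and somewhat more careful version of the paper's.
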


\begin{proof}
    Recall that $\P$ is prime if and only if $\I \otimes \J \subseteq \P$ implies that $\I$ or $\J \subseteq \P$. Recall also that by Lemma \ref{lem:tensor-int}, $\I \cap \J = \langle \I \otimes \J \rangle$, so the prime condition can be phrased: if $\I \cap \J \subseteq \P$, then $\I$ or $\J \subseteq \P$. 
    
    We must show that $\P \in \Spc (\A')$ if and only if $G'^{-1}(\P) \in \Spc (\K')$. For one direction, suppose $G'^{-1}(\P)$ is prime and $\I \cap \J \subseteq \P$. Since $G'^{-1}(\I \cap \J) = G'^{-1}(\I) \cap G'^{-1}(\J)$, we have $G'^{-1}(\I) \cap G'^{-1}(\J) \subseteq G'^{-1}(\P)$, and by the prime assumption $G'^{-1}(\I)$ or $G'^{-1}(\J) \subseteq G'^{-1}(\P)$; assume $G'^{-1}(\I)\subseteq G'^{-1}(\P)$. Since we have assumed that $G'$ induces a bijection on thick ideals, this implies $\I \subseteq \P$, and $\P$ is prime.

    On the other hand, suppose $\P$ is prime. By the assumption of the proposition, to show that $G'^{-1}(\P)$ is prime, it is enough to check that if $G'^{-1}(\I) \cap G'^{-1}(\J) \subseteq G'^{-1}(\P)$, then $G'^{-1}(\I)$ or $G'^{-1}(\J) \subseteq G'^{-1}(\P)$. This implies that $\langle G'(x) \mid x \in G'^{-1}(\I) \cap G'^{-1}(\J) \rangle = \I \cap \J \subseteq \P.$ By primeness of $\P$, either $\I$ or $\J \subseteq \P$, and we are done.

    Hence $\Spc (\A')$ and $\Spc (\K')$ are in bijection. Since the maps on thick ideals are containment preserving, it is straightforward to check that the induced map on spectra is continuous.
\end{proof}

Our next goal is to understand better the relationship between $\Spc(\K)$ and $\Spc(\A)$ when $\K$ is {\emph{not}} unipotent. To define a map between these spaces, we use the language of abstract support data. Recall the following definition from~\cite[Definition 4.1.1]{NVY1}.

\begin{defn}\label{defn:support-datum}
A {\emph{support datum}} on a 
\mtc\ $\K$ is a map $\sigma$ from $\K$ to the collection of closed subsets of a 
topological space $X$ for which the following properties hold: 
    \begin{enumerate}
    \item $\sigma(0)=\varnothing$ and $\sigma(\unit)= X$;
    \item $\sigma(x \oplus y)=\sigma(x) \cup \sigma(y)$ for all $x,y \in \K$;
    \item $\sigma(\Sigma x)=\sigma(x)$ for all $x \in \K$;
    \item if $x \to y \to z \to \Sigma x$ is a distinguished triangle in $\K$, then $\sigma(x) \subseteq \sigma(y) \cup \sigma(z);$
    \item $\sigma(x) \cap \sigma(z) = \bigcup_{y \in \K} \sigma(x \otimes y \otimes z)$ for all $x, z \in \K$.
    \end{enumerate}
\end{defn}

\begin{rem}
\label{rem:tpp}
Note that when $\K$ is braided or thickly generated by its unit, Definition \ref{defn:support-datum}(v) reduces to the more familiar identity
\begin{enumerate}
\item[(v$'$)] $\sigma(x) \cap \sigma(z) = \sigma(x \otimes z)$. 
\end{enumerate}
We will refer to Definition \ref{defn:support-datum}(v) as the {\emph{noncommutative tensor product property}} and to (v$'$) as the {\emph{tensor product property}}. Note that any support datum satisfying the tensor product property automatically satisfies the noncommutative tensor product property.
\end{rem}

\begin{prop}
    \label{prop:support-funct-g}
    Assume that the action of $\widehat{\K}$ on $\widehat{\A}$ is conservative and right tensor-invertible. For all $x\in \K$, let
    \[
    \sigma(x):= \bigcup_{x' \in \langle x \rangle} \supp(G(x')) . 
    \]
    Then $\sigma$ is a support datum on $\K$ with values in a space $X$ whose points coincide with $\Spc (\A)$ but with a different topology. If $\K$ has a thick generator, then $\sigma$ is a support datum with values in $\Spc(\A)$. 
\end{prop}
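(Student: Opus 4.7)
My plan is to verify the five axioms of Definition~\ref{defn:support-datum} as set-theoretic identities and then specify a topology on $X$. Axioms (i)--(iv) are routine: $\langle 0 \rangle = 0$ and $\langle \unit_\K \rangle = \K$ give (i), using that monoidality of $F$ with $GF \cong 1_\A$ forces $G(\unit_\K) \cong \unit_\A$, so $\supp(G(\unit_\K)) = \Spc(\A)$; axiom (iii) is immediate from $\langle \Sigma x \rangle = \langle x \rangle$; axiom (ii) follows from Lemma~\ref{lem:thick-ideal-tens}, since any $w \in \langle x \oplus y \rangle$ lies in $\Thick((x_1 \otimes x \otimes x_2) \oplus (x_1 \otimes y \otimes x_2))$ for some $x_1, x_2 \in \K$; and axiom (iv) reduces to $\langle x \rangle \subseteq \langle y \oplus z \rangle$ (read off the distinguished triangle) combined with (ii).

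The crux is axiom (v). The inclusion $\supseteq$ is immediate because $x \otimes y \otimes z \in \langle x \rangle \cap \langle z \rangle$ for every $y \in \K$. For $\subseteq$, take $P \in \sigma(x) \cap \sigma(z)$ and choose $x' \in \langle x \rangle$, $z' \in \langle z \rangle$ with $P \in \supp(G(x')) \cap \supp(G(z'))$. Lemma~\ref{cor:propofmathcalE}(c) shows the unipotent category $\A$ is completely prime, so the tensor product property yields $P \in \supp(G(x') \otimes_\A G(z'))$. Monoidality of $F$ together with $GF \cong 1_\A$ gives
\[
G(x') \otimes_\A G(z') \cong G(FG(x') \otimes_\K FG(z')).
\]
Here Proposition~\ref{prop:ideal-bij-spc-general} is pivotal: under our sincerity and right invertibility assumptions, $G^{-1}(G(\langle x \rangle)) = \langle x \rangle$, so from $G(FG(x')) \cong G(x') \in G(\langle x \rangle)$ we conclude $FG(x') \in \langle x \rangle$, and likewise $FG(z') \in \langle z \rangle$. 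Lemma~\ref{lem:thick-ideal-tens} then supplies $x_1, x_2, z_1, z_2 \in \K$ with $FG(x') \in \Thick(x_1 \otimes x \otimes x_2)$ and $FG(z') \in \Thick(z_1 \otimes z \otimes z_2)$, whence $FG(x') \otimes_\K FG(z')$ lies in $\Thick(x_1 \otimes (x \otimes y \otimes z) \otimes z_2) \subseteq \langle x \otimes y \otimes z \rangle$ for $y := x_2 \otimes z_1$, and $P \in \sigma(x \otimes y \otimes z)$.

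For the topology, in general topologize $X$ by declaring $\{\sigma(x) \mid x \in \K\}$ a subbasis of closed sets, making each $\sigma(x)$ closed by construction. When $\K = \Thick(t)$ has a thick generator $t$, I claim $\langle x \rangle = \Thick(t \otimes x \otimes t)$: the inclusion $\supseteq$ is trivial, and for $\subseteq$, Lemma~\ref{lem:thick-ideal-tens} reduces to showing $x_1 \otimes x \otimes x_2 \in \Thick(t \otimes x \otimes t)$ for arbitrary $x_1, x_2 \in \Thick(t)$, which follows by pushing $x_1, x_2 \in \Thick(t)$ through the triangulated functors $- \otimes x \otimes x_2$ and $t \otimes x \otimes -$ in succession. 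Hence $\sigma(x) = \supp(G(t \otimes x \otimes t))$ is a single closed subset of $\Spc(\A)$ in its usual topology. The principal obstacle throughout is the $\subseteq$ direction of (v): combining complete primeness in $\A$, monoidality of $F$, and the thick-ideal bijection of Proposition~\ref{prop:ideal-bij-spc-general} to pull $FG(x')$ back inside $\langle x \rangle$.
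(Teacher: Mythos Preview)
Your proof is correct and follows the same overall strategy as the paper (verify the five axioms of Definition~\ref{defn:support-datum}), but your arguments for (ii), (iv), and (v) are streamlined compared with the paper's. For (iv), the paper constructs an auxiliary ideal $\I = \{x' \in \langle x \rangle \mid G(y' \otimes x' \otimes z') \in \P \text{ for all } y',z' \in \K\}$ and shows it equals $\langle x \rangle$; you bypass this entirely by observing $\langle x \rangle \subseteq \langle y \oplus z \rangle$ and invoking your already-established (ii). Conversely, the paper deduces (ii) from (iv) via the split triangle, whereas you prove (ii) directly from Lemma~\ref{lem:thick-ideal-tens}. For the $\subseteq$ direction of (v), the paper appeals to an external result \cite[Lemma~3.1.2]{NVY1} to place $FG(x') \otimes FG(z')$ in $\langle x \otimes y \otimes z \mid y \in \K\rangle$ and then invokes a finite-generation argument (Lemma~\ref{fin-gen}) to extract a single $y$; your use of Lemma~\ref{lem:thick-ideal-tens} produces $y = x_2 \otimes z_1$ in one step. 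Finally, to get $FG(x') \in \langle x \rangle$ the paper cites Corollary~\ref{cor:right-det-by-e} while you use the bijection of Proposition~\ref{prop:ideal-bij-spc-general}; these are equivalent here. Your route is more self-contained and avoids the external citation, at the cost of leaning more heavily on Lemma~\ref{lem:thick-ideal-tens}.
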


\begin{proof}
Note that $\sigma(x)$ is by definition a union of closed subsets of $\Spc(\A)$. Define $X$ to be the space with the same points as $\Spc(\A)$, but where closed sets are defined to be arbitrary intersections of sets of the form $\sigma(x)$, over all $x \in \K$. Then by definition $\sigma(x)$ is a closed subset of $X$. If $\K$ has a thick generator $g$, then $\sigma(x)$ is already closed in $\Spc(\A)$ with the usual topology since
\[
\bigcup_{x' \in \langle x \rangle} \supp(G(x')) = \supp(G(g \otimes x \otimes g)),
\]
 so in that case $\sigma$ as defined is a support datum with values in $\Spc(\A)$.

Properties (i) and (iii) of a support datum are straightforward to check. 
The containment $\sigma(x \oplus y) \supseteq \sigma(x) \cup \sigma(y)$ of (ii) is also straightforward, since $x$ and $y$ are in $\langle x \oplus y \rangle$, and the containment $\sigma(x \otimes z) \subseteq \sigma(x) \cap \sigma(z)$ of (v) is similar. 

For the remainder of the proof, we will use the notation $G \colon \widehat{\K} \rightarrow \widehat{\A}$
for the functor $G (x) = x * e$ as before, and denote by $F$ a right inverse to $G$.

        The containment $\subseteq$ of (ii) will follow from (iv) and (iii), since there is always a distinguished triangle of the form $x \oplus y \to y \to \Sigma x \to \Sigma(x \oplus y)$. We now show (iv). Suppose that $x \to y \to z \to \Sigma x$ is a distinguished triangle in $\K$ and $\P \not \in \sigma(y) \cup \sigma(z)$. We must show $\P \not \in \sigma(x).$ Since $\P \not \in \sigma(y) \cup \sigma(z)$, for all $y' \in \langle y \rangle$ and for all $z' \in \langle z \rangle,$ we have $G(y') \in \P$ and $G(z') \in \P$. We now claim that
    \[
    \I:=\{ x' \in \langle x \rangle \mid G(y' \otimes x' \otimes z') \in \P \;\; \forall \; \; y',z' \in \K \}
    \]
    is equal to $\langle x \rangle$. On one hand, by definition $\I \subseteq \langle x \rangle$, and it is straightforward to verify that $\I$ is a thick ideal. But $x \in \I$, since we have a distinguished triangle
    \[
    G(y' \otimes x \otimes z') \to G(y' \otimes y \otimes z') \to G(y' \otimes z \otimes z') \to \Sigma(G(y' \otimes x \otimes z'))
    \]
    for any $y', z' \in \K$, and the second two objects are in $\P$ by the assumption on $y$ and $z$. Since $\P$ is triangulated, $G(y' \otimes x \otimes z') \in \P$ for any $y', z' \in \K$, that is, $x \in \I$. Hence $\I = \langle x \rangle$. This immediately implies that $G(x') \in \P$
for any $x' \in \langle x \rangle$, that is, $\P \not \in \sigma(x)$.

    We now show the reverse containment of (v), that is, 
    \[
    \sigma(x) \cap \sigma(z) \subseteq \bigcup_{y \in \K} \sigma(x \otimes y \otimes z).
    \]
    Suppose $\P \in \sigma(x) \cap \sigma(z)$. That is, for some $x' \in \langle x \rangle$ and some $z' \in \langle z \rangle$, the objects $G(x')$ and $G(z')$ are not in $\P$. Since $\P$ is prime in $\A$, this means that $G(x') \otimes G(z') \not \in \P$, and hence $F(G(x') \otimes G(z')) \cong FG(x') \otimes FG(z')$ is an object of $\K$ whose image under $G$ is not contained in $\P$. To complete the proof of this containment, it now suffices to observe that $FG(x') \otimes FG(z') \in \langle x \otimes y \otimes z\rangle$ for some $y \in \K$. By \cite[Lemma 3.1.2]{NVY1}, we know that $FG(x') \otimes FG(z') \in \langle x \otimes y \otimes z \mid y \in \K \rangle$, since by Corollary \ref{cor:right-det-by-e}, $FG(x') \in \langle x \rangle$ and $FG(z') \in \langle z \rangle$. Therefore, there must exist a finite collection $y_1, \ldots, y_n$ of objects such that $FG(x') \otimes FG(z') \in \langle x \otimes y_i \otimes z \mid i = 1, \ldots, n\rangle.$ In particular, this implies that 
    \[
    FG(x') \otimes FG(z') \in \left \langle x \otimes \left ( \bigoplus_{i=1}^n y_i \right )  \otimes z \right \rangle,
    \]
    and so the claim is proven. 
\end{proof}

The point of defining a support datum above is that we can use it to give a continuous map on Balmer spectra; in particular, we deduce the following proposition.

\begin{prop}
    \label{prop:map-on-spectra-t-k}
    Assume that the action of $\widehat{\K}$ on $\widehat{\A}$ is conservative and right tensor-invertible. Then there is a set map of the form
    \begin{align*}
        \Spc (\A) & \xrightarrow{\xi} \Spc (\K) \;\;\\
        \P & \mapsto \{ x \in \K \mid \P \not \in \supp(G(x')) \;\; \forall \;\; x' \in \langle x \rangle\}\\
        &\;\;\;\;\;\;\; =\{x \in \K \mid G(x')  \in \P \;\; \forall \;\; x' \in \langle x \rangle\}.
    \end{align*}
    If $\K$ has a thick generator, then $\xi$ is continuous. If either $\Spc( \K)$ or $\Spc (\A)$ is Noetherian, then $\xi$ is surjective.
\end{prop}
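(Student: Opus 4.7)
I would realize $\xi$ as the universal map induced by the support datum $\sigma$ of Proposition~\ref{prop:support-funct-g}, then verify the three claims separately. For well-definedness, I would check directly from the support-datum axioms (i)--(v) that $\xi(\P) = \{x \in \K : \P \notin \sigma(x)\}$ is a proper prime thick ideal: properness from $\sigma(\unit) = \Spc(\A)$; closure under suspension, cones, and direct summands from (ii), (iii), (iv) applied to each rotation of a distinguished triangle; the two-sided ideal property from $\sigma(x \otimes y) \subseteq \sigma(x)$ (obtained by setting the middle term to $\unit$ in (v)); and primeness from the noncommutative tensor-product property (v) combined with rigidity, since $x \otimes y \otimes z \in \xi(\P)$ for all $y$ forces $\P \notin \sigma(x) \cap \sigma(z)$ and hence $x \in \xi(\P)$ or $z \in \xi(\P)$. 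For continuity under the thick-generator hypothesis, Proposition~\ref{prop:support-funct-g} already gives $\sigma(x)$ closed in $\Spc(\A)$, so the identity $\xi^{-1}(\supp_\K(x)) = \sigma(x)$ shows continuity on a subbasis.

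For surjectivity, fix $\Q \in \Spc(\K)$ and set $Z_\Q := \bigcap_{x \notin \Q} \sigma(x)$; one checks that $\xi(\P) = \Q$ holds exactly when $\P \in Z_\Q \setminus \bigcup_{y \in \Q} \sigma(y)$. I would first show $Z_\Q \neq \varnothing$. Assuming the contrary, Noetherianity extracts a finite subfamily $x_1, \ldots, x_n \notin \Q$ with $\bigcap_i \sigma(x_i) = \varnothing$; iterating (v) yields $\sigma(x_1 \otimes y_1 \otimes \cdots \otimes y_{n-1} \otimes x_n) = \varnothing$ for every choice of $y_j \in \K$. Sincerity makes $\sigma$ faithful (since $\supp(G(z')) = \varnothing$ forces $G(z') = 0$ and hence $z' = 0$ for all $z' \in \langle z\rangle$), so each such tensor product vanishes. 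This forces $\langle x_1\rangle \otimes \cdots \otimes \langle x_n\rangle = 0 \subseteq \Q$; iterated primeness of $\Q$ then places some $x_i$ in $\Q$, a contradiction.

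To produce a point in $Z_\Q \setminus \bigcup_{y \in \Q} \sigma(y)$, I would take $\P$ to be the generic point of an irreducible component of the Noetherian closed set $Z_\Q$ and argue by contradiction that no $y \in \Q$ has $\P \in \sigma(y)$. If such a $y$ existed, closedness of $\sigma(y)$ would give $W := \overline{\{\P\}} \subseteq \sigma(y)$, and Noetherian quasi-compactness together with irreducibility of $W$ would contract this to $W \subseteq \supp(G(y^\star))$ for a single $y^\star \in \langle y\rangle \subseteq \Q$. For each $x \notin \Q$, the inclusion $W \subseteq \sigma(x) \cap \supp(G(y^\star))$, combined with the tensor-product property in the unipotent $\A$ and the identity $GF \cong 1_\A$ (with $F$ monoidal), would yield an element $u_x := FG(x'_x) \otimes_\K FG(y^\star) \in \langle x\rangle \otimes \langle y^\star\rangle \subseteq \Q$ (using Corollary~\ref{cor:right-det-by-e}) with $W \subseteq \supp(G(u_x))$. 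The \emph{main obstacle} is then to assemble these local choices, indexed over all $x \notin \Q$, into a coherent contradiction with primeness of $\Q$; the route I expect is to use the bijection of Proposition~\ref{prop:ideal-bij-spc-general} between thick ideals of $\K$ and thick $\K$-submodule categories of $\A$, together with the thick-ideal classification of Theorem~\ref{thm:gen-idealsclass} in the Noetherian rigid setting, to exhibit a single $z_\star \in \Q$ with $\supp(G(z_\star)) = Z_\Q$ that also lies in $\bigcap_{x \notin \Q}\langle x\rangle$, and to argue using primeness of $\Q$ that this intersection meets $\Q$ only in the zero object, contradicting $Z_\Q \neq \varnothing$.
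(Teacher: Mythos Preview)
Your treatment of well-definedness and continuity is essentially the paper's argument unpacked (the paper just cites \cite[Theorem~4.2.2]{NVY1} together with Proposition~\ref{prop:support-funct-g}), and your characterization $\xi(\P)=\Q \iff \P\in Z_\Q\setminus\bigcup_{y\in\Q}\sigma(y)$ is correct and clean. The surjectivity argument, however, has two genuine gaps.

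First, the step ``Noetherianity extracts a finite subfamily $x_1,\ldots,x_n\notin\Q$ with $\bigcap_i\sigma(x_i)=\varnothing$'' is not justified when it is $\Spc(\A)$ that is assumed Noetherian. The sets $\sigma(x)$ are in general only specialization-closed, not closed (Proposition~\ref{prop:support-funct-g}); Noetherian spaces do \emph{not} have the finite-intersection property for families of specialization-closed sets (think of the closed points of $\Spec\mathbb{Z}$ with one prime removed at a time). When $\Spc(\K)$ is Noetherian you can rescue this step differently: Theorem~\ref{thm:gen-idealsclass}(c) produces a single $x_0\notin\Q$ with $\langle x_0\rangle\subseteq\langle x\rangle$ for every $x\notin\Q$, whence $Z_\Q=\sigma(x_0)\neq\varnothing$ by faithfulness. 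But for the $\Spc(\A)$-Noetherian case your compactness move fails.

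Second, the endgame you sketch is not correct: there is no reason that $\bigcap_{x\notin\Q}\langle x\rangle$ meets $\Q$ only in zero. Primeness of $\Q$ controls products of ideals landing \emph{inside} $\Q$, not intersections of ideals \emph{not} contained in $\Q$; for instance, whenever $\Spc(\K)$ has height $\geq 1$ there are nonzero objects in $\Q$ lying in $\langle x\rangle$ for every $x\notin\Q$. So the proposed contradiction does not materialize, and the construction of $u_x$ alone does not close the argument.

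The paper avoids both problems by treating the two Noetherian hypotheses with genuinely different mechanisms. When $\Spc(\K)$ is Noetherian, it picks $x_0$ with $\supp_\K(x_0)=\overline{\{\Q\}}$ and then, using that $G(x_0)\notin G(\Q)$ (Proposition~\ref{prop:ideal-bij-spc-general}) and semiprimeness in the rigid $\A$, finds a prime $\P\supseteq G(\Q)$ with $G(x_0)\notin\P$; one then checks $\xi(\P)=\Q$ directly. When $\Spc(\A)$ is Noetherian, the paper runs an iterative chase: at each step one has $x_1\otimes\cdots\otimes x_m\notin\Q$ and a prime $\P_m\supseteq G(\Q)$ avoiding $G(x_1\otimes\cdots\otimes x_m)$; either $\xi(\P_m)=\Q$ and we stop, or there is $x_{m+1}\notin\Q$ witnessing failure, and the chain $\supp(G(x_1\otimes\cdots\otimes x_m))$ of \emph{closed} subsets of $\Spc(\A)$ strictly decreases. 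Noetherianity of $\Spc(\A)$ then terminates the process. The key point you are missing in this case is to work with the honest closed sets $\supp(G(x_1\otimes\cdots\otimes x_m))$ rather than the specialization-closed $\sigma(x)$.
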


\begin{proof}
    The existence and formula for the map $\xi$ are directly by \cite[Theorem 4.2.2]{NVY1} and Proposition \ref{prop:support-funct-g}, as is the fact that $\xi$ is continuous if $\K$ has a thick generator. It remains to show the surjectivity claims. Let $\Q \in \Spc (\K)$; we will show that under either Noetherianity assumption, there exists some $\P \in \Spc (\A)$ such that $\xi(\P)=\Q$. 

    First, suppose that $\Spc (\K)$ is Noetherian. By Theorem \ref{thm:gen-idealsclass}(c), there exists some object $x$ in $\K$ such that $\supp(x) = \overline{\{\Q\}}$. This $x$ consequently has the property that $x \not \in \Q$, and if $x'$ is another object with $x' \not \in \Q$, then $x \in \langle x' \rangle$. Consider the thick ideal $G(\Q)$ in $\A$. By Proposition \ref{prop:ideal-bij-spc-general}, $G(x) \not \in G(\Q)$. Since $\A$ is rigid, and so all thick ideals are intersections of prime ideals (by \cite[Proposition 4.1.1]{NVY2}), there exists some prime ideal $\P$ in $\Spc (\A)$ such that $G(\Q) \subseteq \P$, and $G(x) \not \in \P$. We have $\Q \subseteq \xi(\P)$, since if $y \in \Q$ then $y' \in \Q$ for all $y' \in \langle y \rangle$, and so since $\P$ contains $G(\Q)$ it contains $G(y')$ for all such $y'$. On the other hand, if $x' \not \in \Q$, then $x' \not \in \xi(\P)$, since $x$ is in $\langle x' \rangle$ and $G(x) \not \in \P$. Therefore, $\xi(\P)=\Q$. 

    Now suppose instead that $\Spc (\A)$ is Noetherian. Pick $x_1 \not \in \Q$. As before, we know that $G(x_1)\not \in G(\Q)$, and therefore there exists $\P_1 \in \Spc (\A)$ such that $G(\Q) \subseteq \P_1$ and $G(x_1) \not \in \P_1$. Now suppose that there exists some $x_2 \not \in \Q$ such that for all $x_2' \in \langle x_2\rangle$, we have $G(x_2') \in \P_1$ (in other words, $x_2 \not \in \Q$ and $x_2 \in \xi(\P_1)$). Since $\Q$ is prime and neither
$x_1$ nor $x_2$ is in $\Q$, there exists $y_1 \in \K$ such that $x_1 \otimes y_1 \otimes x_2 \not \in \Q$. 
Note $y_1 \otimes x_2$ also satisfies the property that for any object $y_1' \in \langle y_1 \otimes x_2\rangle,$ we have $G(y_1') \in \P_1$, and $y_1 \otimes x_2 \not \in \Q$. Therefore, without loss of generality, we could have chosen $y_1 \otimes x_2$ for $x_2$, that is, we have $x_2$ with $G(x_2') \in \P$ for all $x_2' \in \langle x_2 \rangle$ and $x_1 \otimes x_2 \not \in \Q$. Now since $G(x_1 \otimes x_2) \not \in G(\Q)$, there exists $\P_2 \in \Spc (\A)$ such that $G(\Q) \subseteq \P_2$ and $G(x_1 \otimes x_2) \not \in \P_2$. 

    Continue inductively in this manner. At step $m$, we have chosen $x_1, \ldots, x_m$ such that each $x_i \not \in \Q$, the tensor product $x_1 \otimes \cdots \otimes x_m \not \in \Q$, and we also have $\P_1,\ldots, \P_m \in \Spc (\A)$ such that for each $i$, the prime $\P_i$ contains $G(\Q)$, does not contain $G(x_1 \otimes \cdots \otimes x_i)$, and $G(x_{i+1}') \in \P_i$ for all $x_{i+1}' \in \langle x_{i+1} \rangle$. Note in particular that $G(x_1 \otimes x_2 \otimes \cdots \otimes x_{i+1}) \in \P_i$. If there exists some $x_{m+1} \not \in \Q$ such that $G(x_{m+1}') \in \P_m$ for all $x_{m+1}' \in \langle x_{m+1} \rangle$, then (possibly by relabeling $x_{m+1}$ by tensoring on the left by some other object) we have that $x_1 \otimes\cdots \otimes x_m \otimes x_{m+1} \not \in \Q$ and $G(x_{m+1}') \in \P_m$ for all $G(x_{m+1}') \in \langle x_{m+1} \rangle$. In this case, we can then continue the process, selecting some $\P_{m+1} \in \Spc (\A)$ such that $G(\Q) \subseteq \P_{m+1}$ and $G(x_1 \otimes \cdots \otimes x_{m+1}) \not \in \P_{m+1}$. 

    We claim that this process gives rise to a descending chain of closed subsets of $\Spc (\A)$:
    \[
    \supp(G(x_1)) \supsetneq \supp(G(x_1 \otimes x_2)) \supsetneq \ldots \supsetneq \supp(G(x_1 \otimes x_2 \otimes \cdots \otimes x_m)) \supsetneq \ldots
    \]
Since $G(x_1 \otimes \cdots \otimes x_i)$ is in the thick subcategory generated by 
$G(x_1 \otimes \cdots \otimes x_{i-1})$, Lemma \ref{lem:ginv-ideal} implies      
that $\supp(G(x_1 \otimes \cdots \otimes x_i)) \subseteq \supp(G(x_1 \otimes\cdots \otimes x_{i-1}))$. 
The containment is strict because $\P_i$ is in $\supp(G(x_1 \otimes\cdots \otimes x_i))$ and is not in $\supp(G(x_1 \otimes \cdots \otimes x_{i+1})).$ 

    Now by the Noetherianity assumption on $\Spc (\A)$, this chain must terminate. In particular, there is a step, say $n$, for which there does not exist any $x_{n+1} \in \Q$ such that $G(x_{n+1}') \in \P_n$ for all $x_{n+1}' \in \langle x_{n+1} \rangle$. Set $\P:=\P_n$. Since $G(\Q) \subseteq \P$, by the formula for $\xi(\P)$ we have $\Q \subseteq \xi(\P)$. On the other hand, if $x \not \in \Q$, then there must exist some $x' \in \langle x \rangle$ such that $G(x') \not \in \P$, so $x \not \in \xi(\P)$. That is, $\xi(\P)=\Q$. 
    \end{proof}

We highlight next one useful consequence of surjectivity in the proposition: 
Noetherianity of $\Spc (\A)$ implies Noetherianity of $\Spc (\K)$ in the case that $\xi$ is continuous. Recall that Noetherianity of Balmer spectra is particularly important, since it implies a bijection between thick ideals and specialization-closed subsets of the Balmer spectrum (Theorem \ref{thm:gen-idealsclass}(c)); note that in a Noetherian topological space specialization-closed and Thomason subsets are the same.

    \begin{cor}
        \label{cor:spc-t-noeth}
        Assume that $\K$ has a thick generator, that the action of $\widehat{\K}$ on $\widehat{\A}$ is conservative and right tensor-invertible, and assume that $\Spc (\A)$ is Noetherian. Then $\Spc (\K)$ is Noetherian.
    \end{cor}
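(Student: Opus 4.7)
The plan is to deduce this immediately from Proposition \ref{prop:map-on-spectra-t-k} combined with the standard topological fact that a continuous surjective image of a Noetherian space is Noetherian.

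First I would invoke Proposition \ref{prop:map-on-spectra-t-k}: since $\K$ has a thick generator and the action is sincere and right tensor-invertible, the set map $\xi \colon \Spc(\A) \to \Spc(\K)$ defined there is continuous; since $\Spc(\A)$ is Noetherian, the same proposition gives that $\xi$ is surjective. Thus all the hypotheses of the corollary feed directly into having a continuous surjection $\xi$ from a Noetherian space onto $\Spc(\K)$.

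Next I would apply the general fact that continuous surjective images of Noetherian spaces are Noetherian. Explicitly, given a descending chain of closed subsets
\[
Z_1 \supseteq Z_2 \supseteq Z_3 \supseteq \cdots
\]
in $\Spc(\K)$, the chain $\xi^{-1}(Z_1) \supseteq \xi^{-1}(Z_2) \supseteq \cdots$ of closed subsets of $\Spc(\A)$ stabilizes by Noetherianity of $\Spc(\A)$, and then surjectivity of $\xi$ gives $Z_i = \xi(\xi^{-1}(Z_i))$, so the original chain stabilizes as well.

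There is essentially no obstacle here since the hard work is already encapsulated in Proposition \ref{prop:map-on-spectra-t-k}; the only substantive step is to observe that the continuity hypothesis and the surjectivity hypothesis of that proposition are simultaneously available under the assumptions of the corollary. The proof therefore reduces to a one-line topological argument after citing Proposition \ref{prop:map-on-spectra-t-k}.
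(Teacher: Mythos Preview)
Your proposal is correct and follows the same approach as the paper: cite Proposition \ref{prop:map-on-spectra-t-k} to obtain a continuous surjective map $\Spc(\A) \to \Spc(\K)$, then use that continuous surjective images of Noetherian spaces are Noetherian. The paper's proof is the one-line version of what you wrote.
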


\begin{proof}
    By Proposition \ref{prop:map-on-spectra-t-k}, there is a continuous surjective map $\Spc (\A) \to \Spc (\K)$. 
\end{proof}

\subsection{Stevenson support for conservative right invertible actions}

We continue with our assumption that $\widehat{\K}$ is a rigidly-compactly generated \mtc{} acting on a unipotent rigidly-compactly generated \mtc{} $\widehat{\A}$. When the action is conservative and right invertible, we give a characterization of the Stevenson module-theoretic support on $\A$, and show that the action of $\K$ on $\A$ is parametrizing in the sense of Definition \ref{defn:param}.

\begin{prop}
\label{prop:stev-supp}
    Assume that the action of $\widehat{\K}$ on $\widehat{\A}$ is conservative and right invertible with right inverse $F$. Assume also that $\Spc (\K)$ is Noetherian. Then for any $a \in \A$, the Stevenson module-theoretic support for $a$ is given by
    \[
     \supp^*(a) = \supp(F(a)).
    \]
    In particular, the action of $\K$ on $\A$ is parametrizing, that is, the Stevenson module-theoretic support gives a bijection between the thick submodule categories of $\A$ and the specialization-closed subsets of $\Spc (\K)$.
\end{prop}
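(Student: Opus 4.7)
The plan is to first establish the pointwise formula $\supp^*(a)=\supp(F(a))$ by a Rickard-idempotent computation, and then deduce the parametrizing property by composing two bijections: the bijection between thick ideals of $\K$ and thick $\K$-submodule categories of $\A$ from Proposition~\ref{prop:ideal-bij-spc-general}, and the classification of thick ideals by Thomason-closed subsets of $\Spc(\K)$ from Theorem~\ref{thm:gen-idealsclass}(c), which applies because $\K$ is rigid (as the compact subcategory of a rigidly-compactly generated \mtc) and $\Spc(\K)$ is Noetherian.

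To obtain the formula, I would unwind the definition of the module-theoretic support. A prime $\P\in\Spc(\K)$ lies in $\supp^*(a)$ precisely when ${\it{\Gamma}}_{\P}\unit * a \not\cong 0$ in $\widehat{\A}$. Since $GF\cong 1_{\A}$, one has $a\cong F(a)*\unit$, and associativity of the action gives
\[
{\it{\Gamma}}_{\P}\unit * a \;\cong\; {\it{\Gamma}}_{\P}\unit *(F(a)*\unit) \;\cong\; \bigl({\it{\Gamma}}_{\P}\unit \otimes F(a)\bigr)*\unit .
\]
Sincerity of the action then yields that this vanishes if and only if ${\it{\Gamma}}_{\P}\unit \otimes F(a)\cong 0$ in $\widehat{\K}$. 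Since $F(a)\in\K$ is compact, the self-action case (\cite[Proposition 3.8]{CV}, cited in the example immediately after the definition of $\supp^*$) identifies $\{\P\in\Spc(\K) : {\it{\Gamma}}_{\P}\unit \otimes F(a)\not\cong 0\}$ with the Balmer support $\supp(F(a))$, giving $\supp^*(a)=\supp(F(a))$.

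For the parametrizing property, I combine the two bijections into the composite
\[
\Submods_{\K}(\A)\;\xrightarrow{\;\J\mapsto G^{-1}(\J)\;}\;\Ideals(\K)\;\xrightarrow{\;\I\mapsto \bigcup_{x\in\I}\supp(x)\;}\;\Thom(\Spc(\K)),
\]
and verify that this composite equals $\Phi$. By Corollary~\ref{cor:right-det-by-e}, the set $\{F(a):a\in\J\}$ generates $G^{-1}(\J)$ as a thick right ideal, hence also as a thick two-sided ideal. For any thick ideal $\I=\langle S\rangle$, one has $\bigcup_{x\in\I}\supp(x)=\bigcup_{x\in S}\supp(x)$: a prime $\P$ contains $\I$ iff it contains $S$, so some $x\in\I$ avoids $\P$ iff some $x'\in S$ does. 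Applying this with $S=\{F(a):a\in\J\}$ and invoking the first part gives $\bigcup_{x\in G^{-1}(\J)}\supp(x)=\bigcup_{a\in\J}\supp(F(a))=\Phi(\J)$; taking $\J=\A$ moreover yields $\Phi(\A)=\Spc(\K)$. Thus $\Phi$ realizes the composite bijection and $\Theta$ is its inverse, so the action is parametrizing.

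The main technical hurdle is bookkeeping across the four categories $\A$, $\K$, $\widehat{\A}$, $\widehat{\K}$: the Rickard idempotent ${\it{\Gamma}}_{\P}\unit$ lives in $\widehat{\K}$ while $F(a)$ is compact in $\K$, and one must verify that associativity of the action and the sincerity hypothesis may legitimately be invoked for the mixed-compactness object ${\it{\Gamma}}_{\P}\unit\otimes F(a)\in\widehat{\K}$. Once this placement is correctly tracked, the rest of the argument is a direct unraveling of definitions.
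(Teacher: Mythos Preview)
Your argument for the formula $\supp^*(a)=\supp(F(a))$ is identical to the paper's: unwind the definition, use $a\cong F(a)*\unit$ and associativity, invoke sincerity, and identify the result with the Balmer support via the self-action case.

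For the parametrizing statement, your route differs slightly from the paper's. The paper proves a preliminary lemma $G^{-1}(\langle a\rangle)=\langle F(a)\rangle$ and then verifies $\Theta\Phi=\id$ and $\Phi\Theta=\id$ by hand, invoking Theorem~\ref{thm:gen-idealsclass} and Corollary~\ref{cor:right-det-by-e} inside those computations. You instead factor $\Phi$ as the composite of the bijection $\Submods_{\K}(\A)\xrightarrow{G^{-1}}\Ideals(\K)$ from Proposition~\ref{prop:ideal-bij-spc-general} with the bijection $\Ideals(\K)\to\Thom(\Spc(\K))$ from Theorem~\ref{thm:gen-idealsclass}(c), and verify the composite agrees with $\Phi$ using Corollary~\ref{cor:right-det-by-e}. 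This is a cleaner packaging of the same ingredients: it avoids reproving facts already contained in those results, and your observation that $G^{-1}(\A)=\K$ gives $\Phi(\A)=\Spc(\K)$ is neater than the paper's route to the same conclusion.

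One small gap: your final sentence ``$\Theta$ is its inverse'' is asserted but not checked. Knowing that $\Phi$ is a bijection does not by itself identify its inverse with the specific map $\Theta$. You need one more line: for $b\in\A$, the formula gives $\supp^*(b)=\supp(F(b))$, and by Theorem~\ref{thm:gen-idealsclass}(c) this is contained in $\Phi(\J)=\bigcup_{x\in G^{-1}(\J)}\supp(x)$ iff $F(b)\in G^{-1}(\J)$ iff $b=GF(b)\in\J$; hence $\Theta\Phi=\id$, which suffices since $\Phi$ is already bijective. With this added, your proof is complete and arguably more transparent than the paper's.
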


\begin{rem}
    The Noetherianity assumption on $\Spc (\K)$ in this proposition is necessary for the Stevenson module-theoretic support, which is based on the Balmer--Favi support in $\K$, to be a well-defined support theory (see \cite{CV}).
\end{rem}

\begin{proof}
    For the first statement, we simply check:
    \begin{align*}
        \supp^*(a) &=\{ \P \in \Spc (\K) \mid {\it{\Gamma}}_{\P} \unit * a \not \cong 0 \}\\
        &= \{ \P \in \Spc (\K) \mid ({\it{\Gamma}}_{\P} \unit \otimes F(a))* \unit \not \cong 0 \}\\
                &= \{ \P \in \Spc (\K) \mid {\it{\Gamma}}_{\P} \unit \otimes F(a) \not \cong 0 \}\\
                 &= \{ \P \in \Spc (\K) \mid \P \in \supp(F(a)) \}\\
                 &=\supp(F(a)).
    \end{align*}

    For the second statement, first note that for any $a \in \A$,
    $
    G^{-1}(\langle a \rangle) = \langle F(a) \rangle.
    $
    Clearly, $F(a) \in G^{-1}(\langle a \rangle)$, so the containment $\supseteq$ of the statement follows. On the other hand, if $x \in G^{-1}(\langle a \rangle)$, then by Corollary \ref{cor:right-det-by-e},
    \[
    \langle x \rangle = \langle FG(y) \mid  y \in \langle x \rangle \rangle_r.
    \]
Now $G(y) \in \langle a \rangle$ for all $y \in \langle x \rangle$, 
and therefore $FG(y) \in \langle F(a) \rangle$. Hence $x$ is in $\langle F(a) \rangle$.

Recall from Section \ref{subsect:RickStevenson} that for the action of $\K$ on $\A$ to be parametrizing, the maps $\Phi$ and $\Theta$ should be inverse bijections, where
\begin{align*}
\Phi(\J) &:= \bigcup_{a \in \J} \supp^*(a) \text{ for any }\K\text{-submodule category } \J,\\
\Theta(S)&:= \{a \in \A \mid \supp^*(a) \subseteq S\} \text{ for any specialization-closed subset }S\text{ of }\Spc (\K).
\end{align*}
We check now that for any $\K$-submodule category $\J$ of $\A$, we have
\begin{align*}
\Theta(\Phi(\J)) &= \Theta \left (\bigcup_{a \in \J} \supp^*(a) \right )\\
&=\Theta\left (\bigcup_{a \in \J} \supp(F(a)) \right)\\
&=\{b \in \A \mid \supp(F(b)) \subseteq \bigcup_{a \in \J} \supp(F(a)) \}\\
&=\{b \in \A \mid F(b) \in \langle F(a) \mid a \in \J \rangle \}\\
&=\J.
\end{align*}
The fourth equality follows from the general classification theorem for monoidal triangulated categories (Theorem \ref{thm:gen-idealsclass}). The fifth equality follows from the fact proven above that $G^{-1}(\langle a \rangle) = \langle F(a) \rangle$, since if $F(b) \in \langle F(a) \mid a \in \J \rangle$, then $F(b) \in G^{-1}(\J)$, and by Proposition \ref{prop:ideal-bij-spc-general} this implies that $G(F(b)) \cong b$ is in $\J$. 

On the other hand, if $S$ is a specialization-closed subset of $\Spc (\K)$, then
\begin{align*}
\Phi(\Theta(S)) &= \Phi( \{a \in \A \mid \supp^*(a) \subseteq S \} )\\
&= \bigcup_{a \text{ with } \supp^*(a) \subseteq S} \supp^*(a)\\
&= \bigcup_{a \text{ with }\supp(F(a)) \subseteq S} \supp(F(a))\\
&=S.
\end{align*}
To see the last equality, note that again by Theorem \ref{thm:gen-idealsclass}, given any closed subset $S$ of $\Spc (\K)$ there is an object $x$ with $\supp(x)=S$. By Corollary \ref{cor:right-det-by-e}, $\langle x \rangle = \langle FG(x) \rangle$ and hence $\supp(x) = \supp(FG(x))$ for any $x \in \K$. 
\end{proof}

\begin{rem}
One might attempt to use the Stevenson support on $\A$, combined with the universal property of the Balmer support on $\Spc(\A)$, to obtain a map $\Spc(\K) \to \Spc(\A)$ as a possible candidate for an inverse map to the map $\xi$ from Proposition \ref{prop:map-on-spectra-t-k}. Note however that the Stevenson support will only satisfy the tensor product property (see Remark \ref{rem:tpp}) if $\K$ is completely prime. 
\end{rem}

\section{Unipotent Hopf algebras}\label{sec:unipotent-Hopf}
\label{sec:Hopf}

In this section, we will apply our previous results to the specific case of finite dimensional unipotent Hopf algebras. 
Throughout this section we fix a finite dimensional unipotent Hopf algebra $A$ over a field $k$;
in particular, $A/\rad(A)\cong k$. 
Note that $A^{\env}$ is also a unipotent Hopf algebra:
$\rad(A)\ot_k A + A\ot_k\rad(A)$ is a nilpotent ideal of $A^{\env}$, and a dimension argument
shows that the quotient of $A^{\env}$ by this ideal is isomorphic to $A/\rad(A)\ot_k A/\rad(A)
\cong k\ot_k k\cong k$. 
It follows that $\rad(A^{\env})$ is precisely that ideal, and $A^{\env}$ is unipotent. 
Set $\E = \Thick(A)$, the shell of the left-right projective category $\ulrp(A^{\env})$ of $A$-bimodules. 
A few needed results will be stated more generally for a finite dimensional algebra $\Lambda$.

\subsection{A subgroup of the Picard group of $\ulrp(\Lambda^{\env})$}

The Picard group of a monoidal category is the collection of tensor-invertible objects (see~\cite{BalmerPic2010}). 
We recall some results on $\Lambda$-bimodules 
that are tensor-invertible with respect to $\otimes_{\Lambda}$,
paraphrasing part of~\cite[Prop.~5.2]{Bass1968}. If $\alpha$ is an algebra automorphism of $\Lambda$, and $M$ is a left $\Lambda$-module, write ${_{\alpha}}M$ for the left $\Lambda$-module that is $M$ as a vector space and has action 
\[
a.m:=\alpha(a)m,
\]
where $\alpha(a)m$ is the action of $\alpha(a)$ on $m$ in $M$. We use similar notation for right modules and for bimodules. So, for example, if $B$ is a $\Lambda$-bimodule and $\alpha$ and $\beta$ are algebra automorphisms of $\Lambda$, then ${_{\alpha}} B_{\beta}$ is the new bimodule such that $\Lambda$ acts through $\alpha$ on the left and through $\beta$ on the right.

\begin{prop}[Bass~\cite{Bass1968}] 
\label{prop:Bass}  
Let $\alpha, \beta,\gamma$ be algebra automorphisms of $\Lambda$.
Then: 
\begin{enumerate}[\qquad \rm(a)]
\item \ ${}_{\alpha} \Lambda_{\beta} \cong {}_{\gamma \alpha}\Lambda_{\gamma\beta}$
as $\Lambda$-bimodules.
\item \ $({}_1\Lambda_{\alpha})\otimes_{\Lambda} ({}_1 \Lambda_{\beta})\cong {}_1\Lambda_{\alpha\beta}$
as $\Lambda$-bimodules.
\item \ ${}_1\Lambda_{\alpha}\cong {}_1 \Lambda _1$ as $\Lambda$-bimodules 
if, and only if,
$\alpha$ is an inner automorphism.
\end{enumerate}
\end{prop}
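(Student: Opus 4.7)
All three parts follow from writing down the obvious candidate bimodule maps; the only real work is book-keeping for the twists.

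For part (a), the plan is to use $\gamma$ itself: set $\phi\colon {}_\alpha\Lambda_\beta \to {}_{\gamma\alpha}\Lambda_{\gamma\beta}$, $\phi(x) = \gamma(x)$, which is a set bijection. The bimodule check is the one-line computation $\phi(a \cdot x \cdot b) = \gamma(\alpha(a)\,x\,\beta(b)) = \gamma\alpha(a)\,\gamma(x)\,\gamma\beta(b) = a \cdot \phi(x) \cdot b$ in the target. For part (b), I would define $\psi\colon {}_1\Lambda_\alpha \otimes_\Lambda {}_1\Lambda_\beta \to {}_1\Lambda_{\alpha\beta}$ by $x \otimes y \mapsto x\alpha(y)$. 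The key step is well-definedness over $\otimes_\Lambda$: the tensor relation $(x \cdot c) \otimes y = x \otimes (c \cdot y)$ reads $x\alpha(c) \otimes y = x \otimes cy$, and both sides are sent to $x\alpha(c)\alpha(y)$. Left $\Lambda$-linearity is immediate, right $\Lambda$-linearity uses $\alpha(y\beta(b)) = \alpha(y)\,\alpha\beta(b)$, and the two-sided inverse is $z \mapsto z \otimes 1$.

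For part (c), in the forward direction, if $\alpha(b) = ubu^{-1}$ for a unit $u \in \Lambda$, I claim that right multiplication by $u$, $\phi(x) := xu$, is a bimodule isomorphism ${}_1\Lambda_\alpha \to {}_1\Lambda_1$: bijectivity and left $\Lambda$-linearity are immediate, and $\phi(x \cdot b) = x\alpha(b)\,u = xubu^{-1}u = xub = \phi(x)\,b$. Conversely, any left $\Lambda$-linear map $\phi\colon {}_1\Lambda_1 \to {}_1\Lambda_\alpha$ has the form $\phi(a) = au$ for $u := \phi(1)$; bijectivity forces $u$ to be a unit, and the right-module condition $\phi(1 \cdot b) = \phi(1) \cdot b$ becomes $bu = u\alpha(b)$, i.e., $\alpha(b) = u^{-1}bu$.

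The main obstacle, such as it is, is simply keeping the four different $\Lambda$-actions (left vs.\ right, twisted vs.\ untwisted) straight throughout; the maps themselves are all the obvious guesses, so no creativity is required once the setup is parsed correctly.
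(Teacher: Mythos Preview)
Your proof is correct and aligns with the paper's approach: the paper does not give a full proof (it cites Bass), but the one hint it does provide---that for (a) the map $\gamma$ itself is the required $\Lambda$-bimodule isomorphism ${}_{\alpha}\Lambda_{\beta}\to{}_{\gamma\alpha}\Lambda_{\gamma\beta}$---is exactly your argument. The remaining parts are standard and your computations are accurate.
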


The proof of part (a) in the proposition is that  
the algebra automorphism $\gamma\colon \Lambda\rightarrow \Lambda$
is in fact itself a $\Lambda$-bimodule isomorphism
from $ {}_{\alpha}\Lambda_{\beta}$ to ${}_{\gamma\alpha}\Lambda_{\gamma\beta}$.
As a consequence of part (a) in the proposition, each bimodule
$ {}_{\alpha}\Lambda_{\beta}$ is left-right projective (by taking
$\gamma = \alpha^{-1}$, respectively $\gamma = \beta^{-1}$).
As a consequence of parts (a) and (b), each 
bimodule ${}_{\alpha}\Lambda_{1}$ is invertible, with inverse
${}_{\alpha^{-1}}\Lambda_{1}$. 
The collection of $\Lambda$-bimodules ${}_{\alpha}\Lambda_{1}$, for
outer automorphisms $\alpha$, is isomorphic to a subgroup 
of the Picard group of $\ulrp(\Lambda^{\env})$.

We also note that $\Aut(A)$ acts on $\Spc(\umod(A))$ via twisting in the unipotent case. Note that if $A$ is a non-unipotent finite-dimensional Hopf algebras, then one gets an action of the group of Hopf algebra automorphisms on $\Spc(\K)$ in a standard way, see \cite[Example 3.3]{HuangVashaw2025}; however, here $\Aut(A)$ are just algebra automorphisms, not necessarily Hopf algebra automorphisms, and we need the unipotent assumption. 

\begin{prop}
Let $A$ be a finite-dimensional unipotent Hopf algebra. Then $\Aut(A)$ acts on $\Spc(\umod(A))$ in the following way: for $\alpha \in \Aut(A)$ and $\P \in \Spc(\umod(A))$, set $\alpha.\P = \{ {}_{\alpha} M \mid M \in \P\}.$ 
\end{prop}

\begin{proof}
We need to justify that $\alpha.\P$ is again a point of $\Spc(\umod(A))$, that is, it is prime. Recall that by Lemma \ref{lem:tensor-int}, since $\umod(A)$ is unipotent, $\I \cap \J = \langle \I \otimes \J \rangle$, so to show that $\alpha.\P$ is prime we must show that $\I \cap \J \subseteq \alpha.\P$ implies $\I$ or $\J \subseteq \alpha.\P$. But indeed, if $\I \cap \J \subseteq \alpha.\P$, this means that ${}_{\alpha^{-1}} M \in \P$ for all $M \in \I \cap \J$. This implies $(\alpha^{-1}. \I) \cap (\alpha^{-1}.\J) \subseteq \P$, where $\alpha^{-1}.\I$ is the thick ideal consisting of ${}_{\alpha^{-1}} M$ for all $M \in \I$, and likewise for $\J$. Since $\P$ is prime, this implies that either $\alpha^{-1}.\I$ or $\alpha^{-1}.\J$ is contained in $\P$, which means that either $\I$ or $\J$ is contained in $\alpha. \P$. Thus $\alpha.\P$ is in $\Spc(\umod(A))$. 
\end{proof}

\subsection{The Balmer spectra of $\E$ and $\ulrp(A^{\env})$}

We first characterize $\Spc (\E)$ in two ways: in terms of the spectrum of $\umod(A^{\env})$, and in terms of the spectrum of $\umod(A)$. 
Recall that $\umod(A^{\env})$ is a \mtc\ via the tensor product over $k$ using the Hopf algebra structure. The subcategories $\E$ and $\ulrp(A^{\env})$ of $\umod(A^{\env})$ are triangulated, but are not monoidal subcategories, since the tensor product used in $\E$ and $\ulrp(A^{\env})$ is $\otimes_A$. 

\begin{thm}
\label{thm:spce}
Let $\supp$ be the Balmer support for $(\umod(A^{\env}),\otimes_k,k)$, and consider $A$ as an object of $\umod(A^{\env})$ under the standard bimodule structure. Then 
\begin{enumerate}[\qquad \rm(a)]
\item $\Spc (\E) \cong \supp(A) \subseteq \Spc (\umod(A^{\env}))$, and
\item $\Spc (\E) \cong \Spc (\umod(A))$. 
\end{enumerate}
If $\Spc (\umod(A))$ is Noetherian, then the action of $\E$ on $\umod(A)$ is parametrizing.
\end{thm}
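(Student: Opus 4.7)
The plan is to recognize that the hypotheses on $A$ and $\E$ fit exactly into the general framework developed in Section~\ref{sec:unipotent}, and then to apply Proposition~\ref{prop:support-subcat-spc}, Corollary~\ref{cor:spck-vs-spca-unip}, and Proposition~\ref{prop:stev-supp} in turn for the three statements.

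For part (a), I would set $\K_1 = \E$ (with $\otimes_A$ and tensor unit $A$) and $\K_2 = \umod(A^{\env})$ (with $\otimes_k$ and tensor unit $k$), and view $\K_1$ as a full triangulated---but not monoidal---subcategory of $\K_2$. Both are rigid: $\umod(A^{\env})$ is rigid as the stable category of a finite dimensional Hopf algebra, and $\E$ is rigid by Lemma~\ref{lem:montrisubcat} together with the rigidity of $\ulrp(A^{\env})$. Both are unipotent: $\E$ by definition, and $\umod(A^{\env})$ because $A^{\env}$ is itself a unipotent Hopf algebra, as observed at the start of Section~\ref{sec:Hopf}. Proposition~\ref{prop:support-subcat-spc} then immediately yields $\Spc(\E) \cong \supp_2(A) \subseteq \Spc(\umod(A^{\env}))$.

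For part (b), I would apply Corollary~\ref{cor:spck-vs-spca-unip} to $\E$ acting on $\umod(A)$ via $\otimes_A$. The unipotence of $\E$ is automatic, so what remains is to verify sincerity and right invertibility of the action. Sincerity is Proposition~\ref{prop:findimfunctor}(b) applied to $\Lambda = A$: since $A$ is unipotent one has $A/\rad(A) \cong k$, which is trivially separable over $k$, so the functor $G = -\otimes_A k$ is sincere on $\ulrp(A^{\env})$ and in particular on $\E$. Right invertibility comes from the tensor induction functor $F\colon \umod(A) \to \umod(A^{\env})$ of Proposition~\ref{prop:Hopf-GF}: it satisfies $GF \cong 1_{\umod(A)}$, and its image lies in $\E = \Thick(A)$ in the unipotent setting, as already noted in the remark following Proposition~\ref{prop:Hopf-GF}.

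Finally, the parametrizing claim under the Noetherian assumption is a direct application of Proposition~\ref{prop:stev-supp}: sincerity and right invertibility of the action were just established, and $\Spc(\E)$ is Noetherian by part~(b) together with the hypothesis that $\Spc(\umod(A))$ is Noetherian. The only delicate aspect of the whole argument is the ambient rigidly-compactly generated setup required by Section~\ref{sec:unipotent}---one must ensure that $\E$, viewed inside a suitable completion of $\umod(A^{\env})$, has compact generators matching its compact part so that the big-category hypotheses of Corollary~\ref{cor:spck-vs-spca-unip} and Proposition~\ref{prop:stev-supp} are met---but this should be a routine verification given that $A^{\env}$ is a finite dimensional selfinjective algebra and $\E$ is generated by the single compact object $A$.
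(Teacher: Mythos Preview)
Your proposal is correct and follows essentially the same route as the paper's proof: Proposition~\ref{prop:support-subcat-spc} for (a), Corollary~\ref{cor:spck-vs-spca-unip} combined with Propositions~\ref{prop:findimfunctor}(b) and~\ref{prop:Hopf-GF} for (b), and Proposition~\ref{prop:stev-supp} for the parametrizing claim. Your write-up is in fact more explicit than the paper's in verifying rigidity, unipotence, and separability of $A/\rad(A)$, and in flagging the ambient rigidly-compactly generated setup; the paper leaves these checks implicit.
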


\begin{proof}
We apply Proposition \ref{prop:support-subcat-spc} to $\K_1 = \E$ and $\K_2 = \umod(A^{\env})$,
noting that $\E$ is a full triangulated subcategory of $\umod(A^{\env})$ and both $\umod(A^{\env})$ and $\E$ are unipotent.
This proves (a).  
For (b), first note that by Proposition \ref{prop:findimfunctor}(b) the action of $\E$ on $\umod(A)$ is conservative, 
and by Proposition~\ref{prop:Hopf-GF} it is right tensor-invertible. 
The statement (b) now follows from Corollary \ref{cor:spck-vs-spca-unip}. 
The last claim follows from Proposition \ref{prop:stev-supp}.
\end{proof}

Although we cannot give a description which is quite as clean for $\Spc (\ulrp(A^{\env}))$, nor for $\Spc(\K)$ where $\K \subseteq \ulrp(A^{\env})$
is an arbitrary monoidal triangulated subcategory,
we are at least able to conclude that points are all images of points from $\Spc(\umod(A))$.

\begin{thm}
\label{thm:spclrp}
Suppose $\Spc (\umod(A))$ is Noetherian, and let $\K \subseteq \ulrp(A^{\env})$ be a rigid monoidal triangulated subcategory. Then there is a surjective map $\xi:\Spc (\umod(A)) \to \Spc (\K)$. Furthermore:
\begin{enumerate}[\qquad \rm(a)]
\item If $\K$ has a thick generator, then $\xi$ is continuous, and consequently $\Spc(\K)$ is Noetherian, and the action of $\K$ on $\umod(A)$ is parametrizing.
\item If $H$ is a subgroup of $\Aut(A)$ such that $_{\phi}A_1 \in \K$ for all $\phi \in H$, then $\xi$ factors through the orbit space $\Spc(\umod(A))/H$.
\item If $H$ is a subgroup of $\Aut(A)$ such that $\K$ is precisely the monoidal triangulated subcategory 
\[
\Thick({_\phi}A_1 \mid \phi \in H),
\]
then the points of $\Spc(\K)$ are in bijection with $H$-orbits of points of $\Spc(\umod(A))$.
\item If $H$ and $\K$ are as in (c), and $H$ is a finite group, then $\Spc(\K)$ is precisely the topological quotient $\Spc(\umod(A)) / H$.
\end{enumerate} 
\end{thm}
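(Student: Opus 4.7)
The strategy is to apply the general machinery of Section~\ref{sec:unipotent} to the action of $\K$ on $\umod(A)$ by $\otimes_A$, exploiting the monoidal tensor induction $F\colon\umod(A)\to\E$ of Proposition~\ref{prop:Hopf-GF} and the invertibility of the twisted bimodules ${_\phi}A_1$ from Proposition~\ref{prop:Bass}. First I verify the hypotheses of Proposition~\ref{prop:map-on-spectra-t-k}: since $\K$ is a monoidal triangulated subcategory containing the tensor unit $A$, the shell $\E=\Thick(A)$ is contained in $\K$, so the monoidal functor $F$ (which lands in $\E$ under the unipotent hypothesis) lands in $\K$ and serves as a right tensor-inverse to $G=-\otimes_A k$. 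Sincerity of the action follows from Proposition~\ref{prop:findimfunctor}(b). Proposition~\ref{prop:map-on-spectra-t-k} then supplies the surjective map $\xi$ with the explicit formula
\[
\xi(\P) = \{x\in\K \mid G(y\otimes_A x\otimes_A z)\in\P \text{ for all }y,z\in\K\},
\]
(after invoking Lemma~\ref{lem:thick-ideal-tens} to simplify the condition on $\langle x\rangle_\K$). For (a), continuity is part of Proposition~\ref{prop:map-on-spectra-t-k}, Noetherianity of $\Spc(\K)$ is Corollary~\ref{cor:spc-t-noeth}, and the parametrization claim is Proposition~\ref{prop:stev-supp}.

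For (b), each $\phi\in H$ with ${_\phi}A_1\in\K$ produces an autoequivalence $T_\phi\colon\umod(A)\to\umod(A)$, $M\mapsto{_\phi}A_1\otimes_A M$, inducing an $H$-action on $\Spc(\umod(A))$. Because ${_\phi}A_1$ is tensor-invertible in $\K$ (with inverse ${_{\phi^{-1}}}A_1$), the map $y\mapsto{_\phi}A_1\otimes_A y$ is a self-bijection of $\K$. Substituting $y'={_\phi}A_1\otimes_A y$ in the formula for $\xi$ and using $T_\phi G(y\otimes x\otimes z)=G({_\phi}A_1\otimes y\otimes x\otimes z)$ then yields $\xi(T_\phi^*\P)=\xi(\P)$, so $\xi$ factors through the orbit space.

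For (c), the key computation exploits the specific form of $\K$: for $x\in\K$ and $\phi,\psi\in H$,
\[
G({_\phi}A_1\otimes_A x\otimes_A{_\psi}A_1)={_\phi}A_1\otimes_A x\otimes_A{_\psi}k\cong{_\phi}(G(x)),
\]
since ${_\psi}k\cong k$ as $A$-modules. Since every $y,z\in\K$ is built from the ${_\phi}A_1$'s by thick operations, one concludes
\[
\xi(\P)=G^{-1}(\widetilde{\P}_\P), \qquad \widetilde{\P}_\P:=\bigcap_{\phi\in H}T_\phi^*\P.
\]
The identity $GF\cong\id$ forces $G(G^{-1}(\J))=\J$ for any thick subcategory $\J$ of $\umod(A)$, so $\xi(\P_1)=\xi(\P_2)$ implies $\widetilde{\P}_{\P_1}=\widetilde{\P}_{\P_2}$. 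To deduce that $\P_1$ and $\P_2$ lie in the same orbit, I use Noetherianity of $\Spc(\umod(A))$ together with rigidity of $\umod(A)$: the set of minimal primes above $\widetilde{\P}$ is finite (generic points of the irreducible components of its associated closed set), and this set is $H$-invariant since minimality is preserved by the action. For finite $H$, prime-avoidance in the finite intersection $\bigcap_\phi T_\phi^*\P$ combined with an ascending-chain argument (iterating $T_\phi^*$ on a hypothetical inclusion $\P\subsetneq T_\phi^*\P$ cycles back in finite time) identifies the minimal primes above $\widetilde{\P}$ precisely with the orbit of $\P$, which is thus determined by $\widetilde{\P}$.

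For (d), with $H$ finite, $\K=\Thick\bigl(\bigoplus_{\phi\in H}{_\phi}A_1\bigr)$ has a thick generator, so (a) applies and $\xi$ is continuous with $\Spc(\K)$ Noetherian. By (b) and (c) the induced $\bar\xi\colon\Spc(\umod(A))/H\to\Spc(\K)$ is a continuous bijection. To upgrade it to a homeomorphism I show $\xi$ itself is closed, using the explicit identity (verified by combining the formula $\xi(\P)=G^{-1}(\widetilde{\P}_\P)$ with $T_\phi$-invariance of $\bigoplus_\phi{_\phi}M$):
\[
\xi(\supp(M))=\supp_\K\!\bigl(F(\tfrac{}{}\textstyle\bigoplus_{\phi\in H}{_\phi}M)\bigr).
\]
By Noetherianity every closed subset of $\Spc(\umod(A))$ has the form $\supp(M)$, so $\xi$ is closed, hence $\bar\xi$ is closed as well. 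I expect the principal technical obstacle to be the orbit-injectivity step in (c) for infinite $H$: standard prime-avoidance fails for infinite intersections of primes, and one must exploit Noetherianity together with the $H$-equivariance of the (finite) set of minimal primes above $\widetilde{\P}$ more delicately to reduce to the finite situation.
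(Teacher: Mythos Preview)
Your overall strategy matches the paper's: existence of $\xi$, part (a), and part (b) all follow from the abstract results of Section~\ref{sec:unipotent} exactly as you indicate, and for (c) you arrive at the same reduction as the paper does---that $\xi(\P)=G^{-1}\bigl(\bigcap_{\phi\in H}\phi.\P\bigr)$, so that $\xi(\P_1)=\xi(\P_2)$ forces the $H$-cores $\bigcap_\phi \phi.\P_1$ and $\bigcap_\phi \phi.\P_2$ to coincide.

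The gap is precisely where you flagged it: deducing that equal $H$-cores implies equal $H$-orbits when $H$ is infinite. Your finite-$H$ argument is fine (prime avoidance on the finite intersection $\bigcap_{\phi\in H}\phi.\P$, which works because $\I\cap\J=\I\otimes\J$ in a unipotent rigid category, together with periodicity of each $T_\phi^*$). But your proposed fix for infinite $H$---passing to the finite set of minimal primes above the core and using its $H$-equivariance---does not close the gap as stated. Finiteness of the minimal primes above an arbitrary thick ideal is not immediate from Noetherianity of $\Spc$ (Noetherianity is DCC on closed sets, whereas $\{\Q:\widetilde\P\subseteq\Q\}$ is generization-closed, not closed), and even granting it, you still need $\P$ itself to be minimal above $\widetilde\P$, for which your ascending-chain argument relies on finite order of $\phi$. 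The paper does not argue this step directly at all: it invokes \cite[Proposition~7.2]{HuangVashaw2025}, an external result stating exactly that two primes with the same $H$-core lie in the same $H$-orbit. So your proof of (c) is complete only for finite $H$; for general $H$ you would need either that cited result or a genuinely new argument.

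Your treatment of (d), including the explicit closed-map identity $\xi(\supp(M))=\supp_\K\bigl(F(\bigoplus_{\phi\in H}{_\phi}M)\bigr)$, is correct and in fact more detailed than the paper, which simply asserts that (c) and (d) follow immediately once the bijection on points is established.
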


\begin{proof}
The existence of a surjective map $\xi\colon \Spc (\umod(A)) \to \Spc (\K)$ follows from Proposition \ref{prop:map-on-spectra-t-k}, since by Proposition \ref{prop:findimfunctor}(b) the action of $\K$ on $\umod(A)$ is conservative, and by Proposition~\ref{prop:Hopf-GF} it is right tensor-invertible. 
It also follows that $\Spc (\K)$ is Noetherian and the action of $\K$ on $\umod(A)$ is parametrizing when $\K$ has a thick generator, since in that case $\xi$ is continuous and the Stevenson support is described by Proposition \ref{prop:stev-supp}.

To see that $\xi$ factors through the orbit space as in (b), suppose $\P\in\Spc (\umod (A))$ and 
$X \in \xi(\P)$, so that $G(X') \in \P$ for all $X' \in \langle X \rangle$. Then for any $\phi \in H$ and $X' \in \langle X \rangle$, $_{\phi}A_1 \otimes_A X' \otimes_A A \cong {_\phi}X' \otimes_A A \in \P,$ that is, $X' \otimes_A A  \in (\phi^{-1}).\P.$ It follows that $\xi$ is constant on the $H$-orbit of $\P$.  

Note that the subcategory described in (c) is indeed a rigid monoidal triangulated subcategory of $\ulrp(A^{\env})$, since it is thickly generated by a collection of objects which are closed under tensor product and closed under duality. In this case, we can check that if $\xi(\P)=\xi(\Q)$, then $\P$ and $\Q$ are in the same $H$-orbit using the formula for $\xi$. Namely, $X \in \xi(\P)=\xi(\Q)$ if and only if $G(X') \in \P$ for all $X' \in \langle X \rangle$, if and only if $G(X') \in \Q$ for all $X' \in \langle X \rangle$.
By Lemma \ref{lem:thick-ideal-tens},
since $\K$ has thick generators of the form $_{\phi} A_1$ for $\phi \in H$,
for all $X' \in \langle X \rangle$ there exist $\phi_1, \phi_2 \in H$ for which
\[
X' \in \Thick({_{\phi_1} A_1} \otimes_A X \otimes_A {_{\phi_2} A_1} ) = \Thick({_{\phi_1}X}_{\phi_2^{-1}}).
\]
Now, since $G(_{\phi}A_1) ={_{\phi}A}_1 \otimes_A k \cong k$, we have $G(X') \in \Thick(G({_{\phi_1}X}))= \Thick({{_{\phi_1}}G}(X)).$ In other words, $X \in \xi(\P)$ if and only if ${_{\phi}G(X)} \in \P$ for all $\phi \in H$, which must hold if and only if ${_{\phi}G(X)} \in \Q$ for all $\phi \in H$ by the assumption that $\xi(\P)=\xi(\Q)$. Since $G$ is essentially surjective, we can rewrite this condition as: $M \in \umod(A)$ satisfies $M \in \phi.\P$ for all $\phi \in H$ if and only if $M \in \phi.\Q$ for all $\phi \in H$. In other words, the $H$-core of $\P$ (as in \cite[Definition 7.1]{HuangVashaw2025}) is equal to the $H$-core of $\Q$. By \cite[Proposition 7.2]{HuangVashaw2025}, this means that $\P$ and $\Q$ are in the same $H$-orbit. Parts (c) and (d) follow immediately.
\end{proof}

\subsection{The left-right projective category for a cyclic $p$-group}
\label{subsect:cyclic}

We now give some additional attention to the special case of modular representation theory, starting with cyclic groups. Let $p$ be a prime, $k$ an algebraically closed field of characteristic $p$,
and $A = k \mathbb{Z}/p\mathbb{Z}$, the group algebra of $\mathbb{Z}/p\mathbb{Z}$. 
Then $A^{\env}$ is the group algebra of $\mathbb{Z}/p\mathbb{Z} \times \mathbb{Z}/p\mathbb{Z}$.
Note that $A$ is unipotent, so we can apply the results of the previous sections. 
We use the presentations of $A$ and $A^{\env}$:
\[
    A \cong k[w]/(w^p) \ \text{ and } \ 
   A^{\env} \cong k[ u,v] / (u^p, v^p)
\]
where $u \in A^{\env}=A \otimes_k A^{\opp}$ corresponds to $w \otimes_k 1$ and $v$ corresponds to $1 \otimes_k w$. 

The Balmer spectrum of $\umod(A)$ is a single point (see \cite{Bal}), and therefore by Theorem \ref{thm:spclrp}, $\Spc (\ulrp(A^{\env}))$ is also a single point. In this section, as a proof-of-concept, we show how this fact can also be deduced using classical results of rank varieties, which we recall now. 

By results of Benson--Carlson--Rickard \cite{BCR}, thick subcategories of $\umod(A^{\env})$ are in bijection with unions of closed subsets of $\mathbb{P}^1_k$ (here and below, when we write $\mathbb{P}^n_k$ we mean under the Zariski topology), and consequently the Balmer spectrum of $\umod(A^{\env})$ is homeomorphic to $\mathbb{P}^1_k$. Under this identification, the Balmer support of $M \in \umod(A^{\env})$ corresponds to the rank variety of $M$ (as defined by Carlson in \cite{Carlson1983}), which corresponds to the cohomological support variety in a natural way by the Avrunin--Scott Theorem \cite{AvruninScott1982}.

Explicitly, for any $\alpha \in k$ the closed point $[\alpha : 1] \in \mathbb{P}^1_k$ is in the rank variety of an $A^{\env}$-module $M$ if $M$ restricted to the subalgebra generated by $\alpha u + v + 1$ in $A^{\env}$ is not free, and contains the point $[1:0]$ if $M$ restricted to the subalgebra generated by $u+1$ is not free.

From this description, we can see that under the bijection between unions of closed subsets of $\mathbb{P}^1_k$ and thick subcategories of $\umod(A^{\env})$, the thick subcategory $\ulrp(A^{\env}) \subseteq \umod(A^{\env})$ corresponds to all points $q$ of $\mathbb{P}^1_k$ such that $[1:0]$ and $[0:1]$ are not in the closure of $q$. That is, the support of $\ulrp(A^{\env})$ is the maximal specialization-closed subset of $\mathbb{P}^1_k$ that does not contain $[1:0]$ and $[0:1]$, or, equivalently, $\ulrp(A^{\env})$ consists of all $M \in \umod(A^{\env})$ such that $[1:0]$ and $[0:1]$ are not in the support of $M$. Note that since this is not a closed subset, $\ulrp(A^{\env})$ cannot have a single thick generator.

Given $\gamma \in k^{\times}$, there is a corresponding automorphism of $A\cong k[w]/(w^p)$ that rescales $w$ by $\gamma$. By abuse of notation, we denote this automorphism again by $\gamma$. 

\begin{prop}
  \label{prop:klein-nonzero-inv}
  Let $A = k \mathbb{Z}/p\mathbb{Z}$.
Every nonzero thick subcategory of $\ulrp(A^{\env})$ contains an object of the form $_{\gamma}A_1$, for some $\gamma \in k^{\times}$. 
\end{prop}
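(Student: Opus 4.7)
The plan is to leverage the Benson--Carlson--Rickard classification of thick subcategories of $\umod(A^{\env})$ via the rank variety interpretation already recalled in the preceding discussion, combined with an explicit computation of the rank variety of the bimodule $_\gamma A_1$. The key point will be showing that the family $\{{_\gamma A_1} : \gamma \in k^\times\}$ accounts for every closed point of the support of $\ulrp(A^{\env})$ inside $\mathbb{P}^1_k$.

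First I would carry out the rank-variety computation for $_\gamma A_1$. Recall that $_\gamma A_1$ is $A \cong k[w]/(w^p)$ as a vector space, with left action twisted through the automorphism $\gamma$ and right action untwisted. In terms of the generators $u = w \otimes 1$ and $v = 1 \otimes w$ of $A^{\env}$, the element $u$ acts on $_\gamma A_1$ as multiplication by $\gamma w$ while $v$ acts as multiplication by $w$ (using that $A$ is commutative). Consequently, for any $\alpha \in k$, the element $\alpha u + v$ acts as multiplication by $(\alpha \gamma + 1)w$. Restricting $_\gamma A_1$ to the cyclic subalgebra generated by $\alpha u + v + 1$ is free precisely when this operator is nonzero, i.e.~when $\alpha \gamma + 1 \neq 0$. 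Thus the rank variety of $_\gamma A_1$ is the single closed point $[-\gamma^{-1} : 1] \in \mathbb{P}^1_k$.

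Next I would assemble the classification argument. As $\gamma$ ranges over $k^\times$, the point $-\gamma^{-1}$ ranges over all of $k^\times$, so the supports $\supp({_\gamma A_1})$ exhaust exactly the closed points $\{[\alpha:1] : \alpha \in k^\times\}$, which are precisely the closed points of $\supp(\ulrp(A^{\env}))$ as described in the preceding paragraphs. Now let $\mathcal{C}$ be a nonzero thick subcategory of $\ulrp(A^{\env})$. Since $\ulrp(A^{\env})$ is a thick subcategory of $\umod(A^{\env})$, so is $\mathcal{C}$, and by Benson--Carlson--Rickard its support is a nonempty specialization-closed subset $S \subseteq \mathbb{P}^1_k$ with $S \subseteq \supp(\ulrp(A^{\env}))$. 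Since any specialization-closed set containing the generic point is all of $\mathbb{P}^1_k$ (which meets the excluded points $[1:0], [0:1]$), the set $S$ must be a nonempty finite collection of closed points of the form $[\alpha:1]$ with $\alpha \in k^\times$.

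Finally, picking any such $[\alpha:1] \in S$ and setting $\gamma = -\alpha^{-1} \in k^\times$, the bimodule $_\gamma A_1$ lies in $\ulrp(A^{\env})$ by Proposition~\ref{prop:Bass}, and its support is the single point $\{[\alpha:1]\} \subseteq S$, so by the Benson--Carlson--Rickard classification $_\gamma A_1 \in \mathcal{C}$. The main potential obstacle is the bookkeeping in the rank-variety computation (getting the twisting convention consistent so that $u$ and $v$ act as the claimed operators), but once this is pinned down the argument reduces to matching closed points of $\mathbb{P}^1_k$ to the explicit family $\{{_\gamma A_1}\}$, and no deeper obstruction is expected.
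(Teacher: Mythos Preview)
Your proof is correct and follows essentially the same approach as the paper's: compute the rank variety of $_\gamma A_1$ to be the single closed point $[-\gamma^{-1}:1]$, then use the Benson--Carlson--Rickard classification to conclude that any nonzero thick subcategory, whose support must contain such a closed point, contains the corresponding $_\gamma A_1$. The only minor slip is the claim that $S$ must be \emph{finite} --- specialization-closed subsets of $\mathbb{P}^1_k$ avoiding the generic point can be arbitrary (possibly infinite) unions of closed points --- but this is inconsequential since you only need one closed point in $S$.
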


\begin{proof}
Let $\I$ be a thick subcategory in $\ulrp(A^{\env})$. 
Then $\I$ is also a thick subcategory in $\umod(A^{\env})$. 
Fix an object $M \in \I$. 
The rank variety of $M$ must contain some closed point which is necessarily not $[1:0]$ or $[0:1]$, say $[\alpha:1]$, where $\alpha \not = 0$. 

Set $\gamma= - 1/\alpha$. We claim that the variety of $_\gamma A_1$ is precisely this point, from which it will follow that $_\gamma A_1$ is in the thick subcategory generated by $M$. Note that $_\gamma A_1$ is $p$-dimensional with basis $1,  w, w^2,\ldots , w^{p-1}$.
The element $u$ of $A^{\env}$ acts by sending $w^i$ to $\gamma w^{i+1}$ for all $i$, and the element $v$ acts by sending $w^i$ to $w^{i+1}.$ Indeed, if we restrict $_\gamma A_1$ to the subalgebra generated by $\beta u+v +1,$ for $\beta \not = \alpha$, then we see that the element $\beta u +v$ sends $w^i$ to a nonzero scalar multiple of $w^{i+1}$ (for $i=0,1,\ldots, p-2$), hence the module is free. 
However, for $\beta =\alpha,$ the element $\beta u+v$ acts by 0, hence the module is not free. 

Since $[\alpha:1]$ is the only closed point in the variety of $_\gamma A_1$, the variety consists precisely of that point. It follows that $_\gamma A_1$ is in the thick subcategory generated by $M$, and consequently in $\I$. 
\end{proof}

\begin{cor}
  Let $A = k\mathbb{Z}/p\mathbb{Z}$. 
    Considered as a \mtc\ under $\otimes_A$, the only thick ideal of $\ulrp(A^{\env})$ is the zero ideal. As a consequence, $\Spc (\ulrp(A^{\env}))$ is the one-point topological space.
\end{cor}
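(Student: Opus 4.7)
The plan is to leverage Proposition \ref{prop:klein-nonzero-inv} together with the tensor-invertibility of the bimodules ${}_{\gamma}A_1$ coming from Proposition \ref{prop:Bass}. The content of Proposition \ref{prop:klein-nonzero-inv} already shows that every nonzero thick subcategory of $\ulrp(A^{\env})$ contains such a ${}_{\gamma}A_1$; so to prove that every nonzero thick ideal of $\ulrp(A^{\env})$ exhausts the whole category, it will suffice to observe that a thick ideal containing a tensor-invertible object must contain the tensor unit $A$.

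Concretely, let $\I$ be a nonzero thick ideal of $\ulrp(A^{\env})$. By Proposition \ref{prop:klein-nonzero-inv}, there exists $\gamma \in k^{\times}$ with ${}_{\gamma}A_1 \in \I$. Because $\I$ is a two-sided tensor ideal, it contains ${}_{\gamma}A_1 \otimes_A {}_{\gamma^{-1}}A_1$. Using Proposition \ref{prop:Bass}(a) to rewrite ${}_{\gamma}A_1 \cong {}_1 A_{\gamma^{-1}}$ and ${}_{\gamma^{-1}}A_1 \cong {}_1 A_{\gamma}$, and then Proposition \ref{prop:Bass}(b), this tensor product is isomorphic to ${}_1 A_{\gamma^{-1}\gamma} = A$, the tensor unit of $\ulrp(A^{\env})$. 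Since $A \otimes_A X \cong X$ for every object $X$ and $\I$ is closed under the tensor action, we conclude $\I = \ulrp(A^{\env})$. Thus the only proper thick ideal is the zero ideal.

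For the consequence on the Balmer spectrum, a prime ideal in $\ulrp(A^{\env})$ is by definition a proper thick ideal, so the zero ideal is the only candidate. It remains to verify that the zero ideal is actually prime: if $\I \otimes \J \subseteq 0$, then, since the only thick ideals are $0$ and $\ulrp(A^{\env})$ itself, if both $\I$ and $\J$ were nonzero we would have $\I = \J = \ulrp(A^{\env})$, whence $\I \otimes \J$ would contain $A \otimes_A A \cong A \not\cong 0$, a contradiction. Hence $\I = 0$ or $\J = 0$, confirming primality. Therefore $\Spc(\ulrp(A^{\env}))$ consists of the single point $\{0\}$. The only step requiring genuine input is the first, which is exactly what Proposition \ref{prop:klein-nonzero-inv} provides; the rest is a formal consequence of the Picard group structure on the ${}_{\gamma}A_1$.
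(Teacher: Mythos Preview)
Your proof is correct and follows the same approach as the paper: both use Proposition~\ref{prop:klein-nonzero-inv} to produce a tensor-invertible ${}_{\gamma}A_1$ inside any nonzero thick ideal, and then invoke Proposition~\ref{prop:Bass} to conclude the ideal contains the unit and hence equals the whole category. Your version simply spells out the invertibility computation and the primality of the zero ideal explicitly, whereas the paper leaves these as immediate consequences.
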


\begin{proof}
By Proposition \ref{prop:klein-nonzero-inv},
every nonzero thick subcategory of $\ulrp(A^{\env})$ contains some $_\gamma A_1$.
Since $_\gamma A_1$ is tensor-invertible under $\otimes_A$ by Proposition \ref{prop:Bass}, there are no proper nonzero thick ideals of $\ulrp(A^{\env})$. 
\end{proof}

\subsection{The left-right projective category for an elementary abelian $p$-group}

Now let $A$ be the group algebra of $(\mathbb{Z}/p \mathbb{Z})^{ n}$, where again we take $k$ to be an algebraically closed field of characteristic $p$. The Balmer spectrum of $\umod(A^{\env})$ is homeomorphic to $\mathbb{P}^{2n-1}_k$, again in a way which identifies the Balmer support with the rank variety support. A similar analysis to that of the previous section can be used to show that the rank variety of $\ulrp(A^{\env})$ is the maximal specialization-closed subset of $\mathbb{P}^{2n-1}_k$ that does not contain any point $[\alpha_1:\ldots:\alpha_{2n}]$ with $\alpha_1=\alpha_2\ldots= \alpha_n =0$ or $\alpha_{n+1}=\alpha_{n+2} = \ldots=\alpha_{2n}=0$. Presenting $A$ as
\[
A \cong k[w_1,\ldots,w_n]/(w_1^p,\ldots,w_n^p),
\]
given any $\phi \in \GL_n(k)$ we have a corresponding automorphism of $A$ given by applying $\phi$ to the $k$-span of $w_1,\ldots, w_n$, a vector space isomorphic to $k^n$; we will also denote this automorphism by $\phi$. One can check that the rank variety of $_{\phi} A_1$ is the closed set whose closed points are all $[\alpha_1:\ldots:\alpha_{2n}]$ for which
\[
\phi \left ( \begin{bmatrix} \alpha_1 \\\ldots\\ \alpha_n \end{bmatrix} \right ) = -\begin{bmatrix} \alpha_{n+1} \\\ldots \\ \alpha_{2n} \end{bmatrix}. 
\]
In particular, the support of each $_{\phi}A_1$ is isomorphic to $\mathbb{P}^{n-1}_k \cong \Spc (\umod(A))$, which illustrates the general result of Theorem \ref{thm:spce} 
that $\supp(A) \subseteq \Spc (\umod(A^{\env}))$ is homeomorphic to $\Spc (\umod(A))$.

Note that we can no longer conclude, as we could in the previous section, that every thick subcategory of $\ulrp(A^{\env})$ contains an invertible bimodule of the form $_{\phi} A_1$. Indeed, there exist thick subcategories of $\ulrp(A^{\env})$, for instance those corresponding to single closed points, which do not contain any $_{\phi} A_1$. 

Nevertheless, we can still deduce from the general results of the previous section that $\Spc (\ulrp(A^{\env}))$ has a unique closed point, in other words, the zero ideal of $\ulrp(A^{\env})$ is prime. In the language of \cite[Definition 4.1]{Balmer2010}, where categories satisfying this property were studied, this would mean that $\ulrp(A^{\env})$ is {\emph{local}}.

\begin{prop}
  \label{prop:ulrp-elemab}
  Let $A =  k (\mathbb{Z}/p\mathbb{Z})^n$.
The Balmer spectrum of $\ulrp(A^{\env})$ has a unique closed point.
\end{prop}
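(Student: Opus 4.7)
The plan is to show that the zero ideal is prime in $\ulrp(A^{\env})$. Once this is established, since every prime thick ideal contains $\{0\}$, the zero ideal will be the unique minimal prime, and hence (because $\overline{\{\P\}} = \{\Q \mid \Q \subseteq \P\}$ in the Balmer spectrum) the unique closed point of $\Spc(\ulrp(A^{\env}))$. Because $\ulrp(A^{\env})$ is rigid, Lemma~\ref{lem:tensor-int} combined with the fact that an intersection of ideals is itself an ideal gives $\I \otimes_{A} \J = \langle \I \cap \J \rangle = \I \cap \J$ for any two thick ideals, so primeness of the zero ideal reduces to verifying that $\I \cap \J \neq 0$ whenever $\I$ and $\J$ are nonzero thick ideals.

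To produce a nonzero element in $\I \cap \J$, I will use the support datum $\sigma$ on $\ulrp(A^{\env})$ constructed in Proposition~\ref{prop:support-funct-g}, built from the action of $\ulrp(A^{\env})$ on $\umod(A)$ via $-\otimes_{A}-$. This action is sincere by Proposition~\ref{prop:findimfunctor}(b) (using $A/\rad(A) \cong k$) and right tensor-invertible by Proposition~\ref{prop:Hopf-GF}, so the hypotheses of Proposition~\ref{prop:support-funct-g} are met. The resulting $\sigma$ takes values in a space whose underlying set is $\Spc(\umod(A))$ and satisfies the noncommutative tensor product property
\[
\sigma(X) \cap \sigma(Y) = \bigcup_{Z \in \ulrp(A^{\env})} \sigma(X \otimes_{A} Z \otimes_{A} Y).
\]

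The key geometric input is irreducibility of $\Spc(\umod(A))$: by Benson--Carlson--Rickard~\cite{BCR}, this spectrum is homeomorphic to $\Proj$ of the reduced cohomology ring of $(\mathbb{Z}/p\mathbb{Z})^n$, namely $\mathbb{P}^{n-1}_{k}$, which is irreducible with a unique generic point $\eta$, so every nonempty closed subset contains $\eta$. For any nonzero $X \in \ulrp(A^{\env})$, sincerity forces $G(X) = X \otimes_{A} k$ to be nonzero in $\umod(A)$, hence $\supp(G(X))$ is a nonempty closed subset of $\Spc(\umod(A))$ and therefore contains $\eta$. Taking $X' = X$ in the union defining $\sigma(X)$, we conclude $\eta \in \sigma(X)$ for every nonzero $X \in \ulrp(A^{\env})$.

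Putting these pieces together, given nonzero thick ideals $\I$ and $\J$, I pick nonzero $X \in \I$ and $Y \in \J$, so that $\eta \in \sigma(X) \cap \sigma(Y)$. The noncommutative tensor product property then produces some $Z \in \ulrp(A^{\env})$ with $\eta \in \sigma(X \otimes_{A} Z \otimes_{A} Y)$, in particular $X \otimes_{A} Z \otimes_{A} Y \neq 0$, and this object lies in $\I \cap \J$ because both ideals are two-sided. The main subtle point is that $\ulrp(A^{\env})$ has no thick generator, so the topology on the target space of $\sigma$ produced by Proposition~\ref{prop:support-funct-g} is \emph{a priori} strictly finer than that of $\Spc(\umod(A))$; however, the argument only uses the existence of a point common to $\sigma(X)$ and $\sigma(Y)$, for which irreducibility of $\mathbb{P}^{n-1}_{k}$ in its usual topology suffices.
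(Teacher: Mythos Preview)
Your overall strategy---reducing to the claim that any two nonzero thick ideals of $\ulrp(A^{\env})$ intersect nontrivially, and verifying this via the support datum $\sigma$ of Proposition~\ref{prop:support-funct-g}---is sound. However, the key topological claim is false: you assert that ``every nonempty closed subset of $\mathbb{P}^{n-1}_k$ contains the generic point $\eta$,'' but the opposite holds. The generic point lies in every nonempty \emph{open} set, and the only closed set containing it is the whole space. Concretely, for $n\ge 2$ take any closed point $q\in\mathbb{P}^{n-1}_k$ and a module $M_q\in\umod(A)$ with $\supp(M_q)=\{q\}$; then $X=F(M_q)\in\E\subseteq\ulrp(A^{\env})$ is nonzero, $G(X)\cong M_q$, and $\supp(G(X))=\{q\}$ does not contain $\eta$. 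So your argument does not establish that $\sigma(X)\cap\sigma(Y)\neq\varnothing$.

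What rescues the situation, and what the paper's proof uses, is the action of $\Aut(A)$ rather than irreducibility. Since each $_\phi A_1$ lies in $\ulrp(A^{\env})$ and $\langle X\rangle$ is a two-sided ideal, we have $_\phi A_1\otimes_A X\in\langle X\rangle$ for every $\phi$, and $G({_\phi A_1}\otimes_A X)\cong{_\phi G(X)}$ has support $\phi\cdot\supp(G(X))$. Thus $\sigma(X)$ is $\Aut(A)$-stable. For $A=k(\mathbb{Z}/p\mathbb{Z})^n$ the subgroup $\GL_n(k)\subseteq\Aut(A)$ acts transitively on closed points of $\mathbb{P}^{n-1}_k$, so $\sigma(X)$ contains \emph{every} closed point whenever $X\neq 0$, and your tensor-product-property step then goes through. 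The paper packages this transitivity via Theorem~\ref{thm:spclrp}, which gives a surjection $\mathbb{P}^{n-1}_k\to\Spc(\ulrp(A^{\env}))$ factoring through $\mathbb{P}^{n-1}_k/\Aut(A)$; the existence of a common point in all $\sigma(X)$ is precisely the missing ingredient your argument needs, and it comes from the automorphism group, not from irreducibility.
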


\begin{proof}
  By Theorem \ref{thm:spclrp}, there is a surjective map $\mathbb{P}^{n-1}_k \to \Spc (\ulrp(A^{\env}))$ which factors through $\mathbb{P}^{n-1}_k/\Aut(A)$. The action of $\Aut(A)$ on closed points of $\mathbb{P}^{n-1}_k$ is transitive: if $M$ is a module with rank support consisting of the single closed point $[\alpha_1 :  \ldots: \alpha_n ]$ then for any $\phi \in \GL_n(k)$, 
the support of ${_\phi} M$ is $\phi^{-1}([\alpha_1 : \ldots : \alpha_n])$.
 Therefore, the map $\mathbb{P}^{n-1}_k \to \Spc(\ulrp(A^{\env}))$ is constant on closed points of $\mathbb{P}^{n-1}_k$, that is, it must send all closed points of $\mathbb{P}^{n-1}_k$ to a single closed point $q$ of $\Spc (\ulrp(A^{\env}))$. 
\end{proof}

\begin{exam}
By Theorem \ref{thm:spclrp}, there is a monoidal triangulated subcategory of $\ulrp(A^{\env})$ whose spectrum has points that are in bijection with projective equivalence classes of closed subsets of $\mathbb{P}^n_k$. Recall that two closed subsets of projective space are called projectively-equivalent if there is some $\phi$ in $\GL_n(k)$ transforming one closed set into the other. Recall also that projective equivalence is a stronger concept than isomorphism, and that the classification of projective equivalence classes is a difficult problem in geometric invariant theory. 
\end{exam}

We illustrate $\ulrp(A^{\env})$, $\E$, and Proposition \ref{prop:ulrp-elemab} using the following picture. The Balmer spectrum of $\umod(A^{\env})$ is homeomorphic to $\mathbb{P}^{2n-1}$, and for objects of $\ulrp(A^{\env})$ we can consider their support as objects of $\umod(A^{\env})$. In this picture, we use the notation $[\vec{\alpha}:\vec{\beta}]$ for points in $\mathbb{P}^{2n-1}$ to represent $[\alpha_1: \alpha_2:...:\alpha_n:\beta_1:\beta_2:...\beta_n]$, for $\vec{\alpha}, \vec{\beta} \in k^n$. We have:

\begin{tikzpicture}[scale=1]

  % Big ambient "blob"
  \draw[thick]
    plot[smooth cycle, tension=1]
      coordinates{(0,0) (4,0.4) (4.2,2.7) (2,4) (-0.3,2.5)};

  \node at (2,4.5)
    {$\Spc(\umod(A^{\env})) \cong \mathbb{P}^{2n-1}$};

  % Sphere centers
  \coordinate (Supp) at (1.2,2.6);
  \coordinate (Wone) at (1.0,1.0);
  \coordinate (Wtwo) at (3.1,1.4);

  % Radii
  \def\R{0.7}
  \def\equator{0.25}

  %====================  supp(A)  ====================
  \draw[thick] (Supp) circle (\R);
  \draw (Supp) ++(-\R,0)
     arc[start angle=180, end angle=360, x radius=\R, y radius=\equator];
  \draw[dashed] (Supp) ++(\R,0)
     arc[start angle=0, end angle=180, x radius=\R, y radius=\equator];

  \node[right] (SuppLabel) at (4.7,3.0)
    {\shortstack[l]{$\supp(A) \cong \mathbb{P}^{n-1} \cong \Spc(\E)$\\[2pt]
    $=\,\{[\vec{\alpha}:-\vec{\alpha}] \mid \vec{\alpha} \in k^n\setminus \{\vec{0}\}\}$}};
  \draw (Supp) -- (SuppLabel.west);

  %====================  W_1  ====================
  \fill[red!10] (Wone) circle (\R);
  \draw[thick,red] (Wone) circle (\R);
  \draw[red] (Wone) ++(-\R,0)
     arc[start angle=180, end angle=360, x radius=\R, y radius=\equator];
  \draw[red,dashed] (Wone) ++(\R,0)
     arc[start angle=0, end angle=180, x radius=\R, y radius=\equator];

  \node[left] (WoneLabel) at (-1.2,1.0)
    {\shortstack[l]{$W_1 \cong \mathbb{P}^{n-1}$\\[2pt]
    $=\,\{[\vec{\alpha}:\vec{0}] \mid \vec{\alpha} \in k^n\setminus\{\vec{0}\}\}$}};
  \draw (WoneLabel.east) -- (Wone);

  %====================  W_2  ====================
  \fill[red!10] (Wtwo) circle (\R);
  \draw[thick,red] (Wtwo) circle (\R);
  \draw[red] (Wtwo) ++(-\R,0)
     arc[start angle=180, end angle=360, x radius=\R, y radius=\equator];
  \draw[red,dashed] (Wtwo) ++(\R,0)
     arc[start angle=0, end angle=180, x radius=\R, y radius=\equator];

  \node[right] (WtwoLabel) at (4.7,1.2)
    {\shortstack[l]{$W_2 \cong \mathbb{P}^{n-1}$\\[2pt]
    $=\,\{[\vec{0}:\vec{\beta}] \mid \vec{\beta} \in k^n\setminus\{\vec{0}\}\}$}};
  \draw (Wtwo) -- (WtwoLabel.west);

\end{tikzpicture}

The category $\ulrp(A^{\env})$ consists of all objects whose $\umod(A^{\env})$-support does not include any points of $W_1$ or $W_2$, and the category $\E$ consists of all objects whose $\umod(A^{\env})$-support is contained in $\supp(A)$. Proposition \ref{prop:ulrp-elemab} is a reflection of the fact that if $\I$ is any thick ideal of $\ulrp(A^{\env})$, then the $\umod(A^{\env})$-support of $\I$ contains all closed points of $\supp(A)$.

\subsection{The left-right projective category for a general finite $p$-group}

Let $G$ be a finite $p$-group and
$k$ an algebraically closed field of characteristic $p$. 
The group algebra $A = kG$ is again a unipotent Hopf algebra. 
For brevity, denote by $V_G^{\maxx}$ the maximal ideal spectrum $\MaxSpec (\cohom(G,k))$ of the cohomology ring of $G$. 
For every elementary abelian $p$-subgroup $E$ of $G$, the restriction functor $\res_E^G\colon \smod(G) \to \smod(E)$ induces a continuous map on the spectra of the respective cohomology rings, $f_E^G\colon V_E^{\maxx} \to V_G^{\maxx}$. Letting $M$ be a $kG$-module, denote by $V_G^{\maxx}(M)\subseteq V_G^{\maxx}$ the support variety of $M$ as in Section~\ref{sect:cohomsupport} based in $V_G^{\maxx}$, and similarly for $kE$-modules. Recall that
\begin{equation}
\label{eqn:var-rest}
V_E^{\maxx}({\res}_E^G(M)) = (f_E^G)^{-1}(V_G^{\maxx}(M))
\end{equation}
by \cite[Theorem 3.1]{AvruninScott1982}. Recall also that by (a naive version of) Quillen Stratification (see e.g.~ \cite[Proposition 5.6.1]{Benson1998}), $V_G^{\maxx}$ is the union of the subsets $f_E^G(V_E^{\maxx})$ over all elementary abelian subgroups $E$ of $G$. 

We will use our work above on elementary abelian $p$-groups 
to show that $\Spc (\ulrp(A^{\env}))$ is again local. 

\begin{thm}
\label{thm:spc-arbitrary-pgroup}
Let $A=kG$ for a finite $p$-group $G$.
The Balmer spectrum of $\ulrp(A^{\env})$ has a unique closed point.
\end{thm}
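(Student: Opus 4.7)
The plan is to extend the approach of Proposition~\ref{prop:ulrp-elemab} from elementary abelian $p$-groups to arbitrary finite $p$-groups. Since $G$ is a finite $p$-group, $\Spc(\umod(A))$ is homeomorphic to $\Proj(\cohom(G,k))$ and is therefore Noetherian, so Theorem~\ref{thm:spclrp} provides a surjective (but not necessarily continuous) map
\[
\xi \colon \Spc(\umod(A)) \to \Spc(\ulrp(A^{\env})).
\]
Writing $X * \unit = X \otimes_A k$ for the action functor, the explicit formula
\[
\xi(\P) = \{X \in \ulrp(A^{\env}) \mid X' * \unit \in \P \text{ for all } X' \in \langle X \rangle\}
\]
from Proposition~\ref{prop:map-on-spectra-t-k} shows that $\xi$ is order-preserving: if $\P \subseteq \P'$, then $\xi(\P) \subseteq \xi(\P')$. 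In particular $\xi(\overline{\{\P\}}) \subseteq \overline{\{\xi(\P)\}}$ for every $\P$, even though $\xi$ need not be fully continuous.

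Using this order-preserving property I would reduce the theorem to the key claim that all closed points of $\Spc(\umod(A))$ are sent by $\xi$ to a single point $q$ of $\Spc(\ulrp(A^{\env}))$. Once this is established, the theorem follows: for any $\R \in \Spc(\ulrp(A^{\env}))$, surjectivity gives $\R = \xi(\P)$ for some $\P$, Noetherianity of $\Spc(\umod(A))$ supplies a closed point $\P_0 \in \overline{\{\P\}}$, and order-preservation yields $q = \xi(\P_0) \subseteq \xi(\P) = \R$. Consequently $q$ is contained in every prime of $\Spc(\ulrp(A^{\env}))$, forcing it to be the unique minimal prime, equivalently the unique closed point.

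For the key claim, I would reduce to the elementary abelian case via Quillen stratification: every closed point of $\Proj(\cohom(G,k))$ is the image under a restriction map $f_E^G \colon \Proj(\cohom(E,k)) \to \Proj(\cohom(G,k))$ of a closed point of $\Proj(\cohom(E,k))$ for some elementary abelian $p$-subgroup $E \leq G$. Within a fixed $E$, the argument of Proposition~\ref{prop:ulrp-elemab} applies: the closed points of $\Proj(\cohom(E,k))$ form a single orbit under $\Aut(kE)$, and the corresponding twists by invertible bimodules $_{\phi}A_1 \in \ulrp(A^{\env})$ (for algebra automorphisms $\phi$ of $A$ acting appropriately on $E$) cause these closed points to collapse under $\xi$, via the factoring through $\Spc(\umod(A))/\Aut(A)$ from Theorem~\ref{thm:spclrp}(b).

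The main obstacle is to connect closed points coming from different elementary abelian subgroups $E_1, E_2 \leq G$. Since $G$ is a $p$-group, its center is nontrivial and contains an elementary abelian subgroup $Z_0$ that lies inside every maximal elementary abelian subgroup of $G$. Exploiting this common central subgroup, together with the rich supply of invertible bimodules $_{\phi}A_1$ for $\phi$ in the full algebra-automorphism group $\Aut(A)$ (typically strictly larger than $\Aut(G)$), one argues that closed points arising from any such $E_1$ and $E_2$ must have the same image $q$ in $\Spc(\ulrp(A^{\env}))$. This bridging step is the most delicate, and it is where the specific structure of $p$-groups and the abundance of invertible bimodules in $\ulrp(A^{\env})$ must be used to conclude.
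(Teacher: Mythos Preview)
Your reduction is sound: it is enough to show that $\xi$ collapses all closed points of $\Spc(\umod(A))$ to a single point, and your order-preserving observation correctly upgrades this to a unique closed point of $\Spc(\ulrp(A^{\env}))$. The gap is in how you propose to achieve that collapse.

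The step within a fixed elementary abelian $E$ does not work as stated. Proposition~\ref{prop:ulrp-elemab} uses the full $\GL_n(k)\subseteq\Aut(kE)$ to get transitivity on closed points of $\Proj(\cohom(E,k))$. But you are applying Theorem~\ref{thm:spclrp}(b) with $H=\Aut(A)=\Aut(kG)$, so you need automorphisms of $kG$, not of $kE$. An arbitrary element of $\GL_n(k)\subseteq\Aut(kE)$ has no reason to extend to an algebra automorphism of $kG$; for a non-abelian $p$-group the image of $\Aut(kG)$ acting on $\Proj(\cohom(G,k))$ is typically far from transitive on closed points. Thus the invertible bimodules ${}_{\phi}A_1$ for $\phi\in\Aut(A)$ are not enough. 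The paper avoids this entirely: instead of twisting by automorphisms of $A$, it \emph{induces} bimodules. One takes $B\in\ulrp(kE^{\env})$ that does the job at the level of $E$ (guaranteed by Proposition~\ref{prop:ulrp-elemab}) and forms $\ind_{E\times E^{\opp}}^{G\times G^{\opp}}(B)\in\ulrp(kG^{\env})$, then checks via the projection formula and \eqref{eqn:var-rest} that tensoring with this induced bimodule moves support as required. These induced bimodules are not invertible; the argument genuinely uses more of $\ulrp(A^{\env})$ than its Picard group.

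Your bridging step is also not correct as sketched. Having a common central elementary abelian subgroup $Z_0$ does not by itself link closed points coming from distinct maximal elementary abelian subgroups, and again you would need automorphisms of $kG$ you do not have. The paper's mechanism is different: once one knows that any $\ulrp(A^{\env})$-submodule category whose support meets $f_E^G(V_E^{\maxx})$ in a closed point must contain all closed points of $f_E^G(V_E^{\maxx})$, one invokes Carlson's theorem that $V_G^{\maxx}$ is \emph{connected}, together with the fact that it is a direct limit of the subspaces $f_E^G(V_E^{\maxx})$, to conclude that the support is all closed points or empty. Connectivity, not a central-subgroup argument, is what glues the pieces together.
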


\begin{proof}
By Theorem~\ref{thm:spclrp}, 
it suffices to show that the submodule category generated by any object $M \in \umod(A)$ has every closed point of $\Spc(\umod(A))$ in its support. 
The support variety of a nonzero submodule category must contain some closed point, say $\mathfrak{m}$, understood as a point of $V_G^{\maxx}$. Then there exists a $kG$-module $M$ such that $V_G^{\maxx}(M)$ is the smallest closed projective subspace containing $\mathfrak{m}$. 
By the above discussion, there is then some elementary abelian subgroup $E$ of $G$ such that $\mathfrak{m}$ is in $f_E^G(V_E^{\maxx})$.
In particular, $\res^G_E(M)$ is not projective. 

By Proposition \ref{prop:ulrp-elemab}, since $\res_E^G(M)$ is not projective, the $\ulrp(kE^{\env})$-submodule category of $\umod(kE)$ generated by $\res_E^G(M)$ has all closed points of $\Spc(\umod(kE))$ in its support. 
In particular, given any other closed point $\mathfrak{n}$,  
there is some $kE$-bimodule $B$ in $\ulrp(kE^{\env})$ such that $k$ is in the thick subcategory generated by $B \otimes_{kE} \res_E^G(M)$. 

Now note that $B \otimes_{k E} \res_E^G(M)$ is a summand of 
\begin{align*}
{\res}_E^G ({\ind}_{E \times E^{{\opp}}}^{G \times G^{{\opp}}}(B) \otimes_{kG} M) &\cong {\res}_E^G( (kG \otimes_{kE} B \otimes_{kE} kG) \otimes_{kG} M)\\
& \cong {\res}_E^G({\ind}_E^G(B \otimes_{kE} {\res}_E^G(M))).
\end{align*}
Since the support variety of $B \otimes_{kE} \res_E^G(M)$ contains $\mathfrak{n}$, it follows from (\ref{eqn:var-rest}) that the support variety of ${\ind}_{E \times E^{{\opp}}}^{G \times G^{{\opp}}}(B) \otimes_{kG} M$ contains the corresponding point of $f_E^G(V_E^{\maxx})$. 
Since $B$ is in $\ulrp (kE^{\env})$, the induced module $\ind_{E\times E^{{\opp}}}^{G\times G^{{\opp}}} (B)$
is in $\ulrp (kG^{\env})$. 
Therefore ${\ind}_{E \times E^{{\opp}}}^{G \times G^{{\opp}}}(B) \otimes_{kG} M$ is in any $\ulrp(A^{\env})$-submodule category of $\umod(A)$ which contains the module $M$.

To summarize, we have shown that if an $\ulrp(kG^{\env})$-submodule category of $\umod(kG)$ has a closed point of $f_E^G(V_E^{\maxx})$ in its support variety for some elementary abelian subgroup $E$, then that submodule category must have all closed points of $f_E^G(V_E^{\maxx})$ in its variety. The topological space $V_G^{\maxx}$ is connected~\cite{Carlson1984}, and is the direct limit of the $f_E^G(V_E^{\maxx})$-subspaces by \cite[Corollary 5.6.4]{Benson1998}, so the support variety of our submodule category is either empty or all of $V_G^{\maxx}$. 
\end{proof}

 \begin{exam}
We note that in extreme cases, this unique closed point is the entire space $\Spc(\ulrp(A^{\env}))$.
Namely, if $G$ is a cyclic $p$-group, or if $p=2$ and $G$ is a generalized quaternion group, then $G$ has a unique elementary abelian subgroup which is cyclic \cite[Theorem 5.4.10(ii)]{Gorenstein1980}, and $\Spc(\umod(A))$ is a single point, so that $\Spc(\umod(A)) \cong \Spc(\ulrp(A^{\env}))$. 
\end{exam}

\subsection{Hochschild cohomology and support for $\E$}
 We return to the more general case of a finite dimensional unipotent Hopf algebra $A$. It remains to discuss the connections among Balmer supports for $\E$ and $\ulrp(A^{\env})$, Stevenson supports for $\umod(A)$, and the classically-defined Hochschild support. We first discuss these connections for the category $\E$, and in the subsequent section discuss the connections for $\ulrp(A^{\env})$.

We say that the Hochschild cohomology ring $\HH(A)$ of $A$ 
satisfies the {\emph{finite generation condition (fg)}} if 
\begin{enumerate}
\item $\HH(A)$ is a finitely generated algebra over k, and 
\item $\Ext^{\bu}_{A^{\env}}(B,B)$ is a finitely generated $\HH(A)$-module, for all $B \in \umod(A^{\env})$.
\end{enumerate}
By \cite[Proposition 1.4]{EHSST2004}, the second condition is equivalent to requiring finite generation of the modules $\Ext^{\bu}_A(M,N)$ over $\HH(A)$, for all $M, N \in \umod(A)$. 

Recall that since $A^{\env}$ is unipotent, we have comparison maps
\begin{align*}
\rho_{A^{\env}}\colon \Spc(\umod(A^{\env})) &\to \Spech (\cohom(A^{\env})),\\
\rho_{\E}\colon \Spc(\E) & \to \Spech (\HH(A)),
\end{align*}
see Section \ref{subsect:catcenter}. Recall also that it is conjectured that $\rho_{A^{\env}}$ takes values in $\Proj(\cohom(A^{\env}))$, and gives a homeomorphism $\Spc(\umod(A^{\env})) \cong \Proj (\cohom(A^{\env}))$ \cite[Conjecture E]{NVY3}.

\begin{thm}
  \label{thm:nvy-conj-gives-hochschild}
  Let $A$ be a finite dimensional unipotent Hopf algebra.
    Suppose that $(\umod(A^{\env}), \otimes_{k})$ satisfies \cite[Conjecture E]{NVY3}, and $\HH(A)$ satisfies (fg). Then $\Spc (\E)$ is homeomorphic to $\Proj (\HH(A))$ in a way which identifies the Balmer support with the Hochschild cohomology support.
\end{thm}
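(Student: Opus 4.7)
My plan is to build a commutative square relating the two spectra via Proposition~\ref{prop:rho-diag} applied to $\K_1 = \E$ and $\K_2 = \umod(A^{\env})$, both of which are unipotent. Since the tensor unit of $\E$ under $\otimes_A$ is $A$ itself, $\cohom(\E) = \End^{\bu}_{\umod(A^{\env})}(A) = \HH(A)$, and Proposition~\ref{prop:rho-diag} supplies the commutative diagram
\[
\begin{tikzcd}
\Spc(\E) \arrow[d, "\eta"'] \arrow[r, "\rho_{\E}"] & \Spech(\HH(A)) \arrow[d, "\iota"] \\
\Spc(\umod(A^{\env})) \arrow[r, "\rho_{A^{\env}}"'] & \Spech(\cohom(A^{\env}))
\end{tikzcd}
\]
in which $\eta$ is induced by the inclusion $\E \hookrightarrow \umod(A^{\env})$ and $\iota$ is induced by the ring homomorphism $f \mapsto f \otimes_k 1_A$.

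First I would control the bottom row and left column using the hypotheses. Conjecture~E applied to the finite tensor category $\umod(A^{\env})$ yields that $\rho_{A^{\env}}$ is a homeomorphism onto $\Proj(\cohom(A^{\env}))$, and provides the Etingof--Ostrik finite generation required for \cite[Proposition 8.4]{CV} to give $\rho_{A^{\env}}(\supp(M)) = V_{\sfH}(M)$ for all $M$. Taking $M = A$ and combining with Theorem~\ref{thm:spce}(a), which realizes $\eta$ as a homeomorphism $\Spc(\E) \cong \supp(A)$, the composite $\iota \circ \rho_{\E} = \rho_{A^{\env}} \circ \eta$ becomes a homeomorphism of $\Spc(\E)$ onto $V_{\sfH}(A) \subseteq \Proj(\cohom(A^{\env}))$; in particular this forces $\rho_{\E}$ to be injective.

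Next I would feed in (fg) for $\HH(A)$ and apply \cite[Proposition 8.4]{CV} directly to $\E$: this shows $\rho_{\E}$ is closed and that $\rho_{\E}(\supp_{\E}(M)) = V_{\sfH}(M)$ for every $M \in \E$. Because $\End^{\bu}_{\E}(A) = \HH(A)$ has zero annihilator in $\HH(A)$, the image $\rho_{\E}(\Spc(\E)) = \rho_{\E}(\supp_{\E}(A)) = \Proj(\HH(A))$. A continuous closed bijection is a homeomorphism, so $\rho_{\E}$ is the desired homeomorphism $\Spc(\E) \cong \Proj(\HH(A))$, and the identity $\rho_{\E}(\supp_{\E}(M)) = V_{\sfH}(M)$ transports the Balmer support on $\E$ to the Hochschild cohomology support, yielding both conclusions of the theorem.

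The hard part will be the bookkeeping around the two finite generation hypotheses, together with the subtlety that the comparison maps are a priori valued in $\Spech$ rather than $\Proj$: Conjecture~E is what upgrades $\rho_{A^{\env}}$ to a homeomorphism onto $\Proj(\cohom(A^{\env}))$, while (fg) for $\HH(A)$ is what promotes the continuous bijection $\rho_{\E}$ to a closed, and hence homeomorphic, one. The commutative square is the mechanism transporting information between the two settings, and tracking where each hypothesis is needed is the main technical point.
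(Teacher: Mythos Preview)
Your approach is essentially the paper's: the same commutative square from Proposition~\ref{prop:rho-diag}, with Theorem~\ref{thm:spce}(a) and Conjecture~E controlling the left and bottom edges to force injectivity of $\rho_{\E}$, and \cite[Proposition~8.4]{CV} supplying closedness and the support identification. Two points deserve care. First, the paper explicitly notes that the arguments of \cite[Proposition~8.4]{CV} \emph{require} surjectivity of $\rho_{\E}$ as an input, so extracting surjectivity from the identity $\rho_{\E}(\supp_{\E}(A)) = V_{\sfH}(A) = \Proj(\HH(A))$ risks circularity; the paper instead secures surjectivity independently via \cite[Theorem~7.2.1(b)]{NVY3}, and uses a complexity--dimension argument (analogous to \cite[Corollary~7.1.3]{NVY3}) to show $\rho_{\E}$ actually lands in $\Proj$ rather than merely $\Spech$, a point you leave implicit. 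Second, your assertion that Conjecture~E ``provides the Etingof--Ostrik finite generation'' for $\cohom(A^{\env})$ is not justified by the hypotheses and is in any case unnecessary: injectivity of $\rho_{\E}$ follows from the injectivity of the composite $\rho_{A^{\env}} \circ \eta$ alone, without needing to identify its image as $V_{\sfH}(A)$.
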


\begin{proof} 
The map $\rho_{\E}\colon \Spc (\E) \to \Spech \HH(A)$ actually takes values in $\Proj( \HH(A))$, by an argument analogous to \cite[Corollary 7.1.3]{NVY3}, since by \cite[Proposition 8.3.6]{Witherspoon2019}
the complexity of a bimodule is equal to the dimension of its support variety. 
By \cite[Theorem 7.2.1(b)]{NVY3}, this map $\rho_{\E}\colon \Spc (\E) \to \Proj( \HH(A))$ is surjective. 

The map $\rho_{\E}\colon \Spc (\E) \to \Proj (\HH(A))$ is also injective. To prove this, note that 
by Proposition \ref{prop:rho-diag}, there is a commutative diagram of the form
    \begin{center}
        \begin{tikzcd}
\Spc (\E) \arrow[d, hook] \arrow[r, two heads] & \Proj(\HH(A)) \arrow[r]            & \Spech(\HH(A)) \arrow[d] \\
\Spc (\umod(A^{\env})) \arrow[r, "\cong"]       & \Proj(\cohom(A^{\env})) \arrow[r, hook] & \Spech(\cohom(A^{\env}))     
\end{tikzcd}
    \end{center}
Injectivity of the map $\Spc (\E) \to \Proj (\HH(A))$ now follows from the injectivity of the composition defined by first going down (see Proposition \ref{thm:spce}), and then going right along the diagram.

To prove that the map $\rho_{\E}\colon \Spc (\E) \to \Proj(\HH(A))$ is a homeomorphism, we must show that $\rho_{\E}$ is closed. This will follow by the arguments of \cite[Proposition 8.4]{CV} (although that proposition is stated for finite tensor categories, the arguments only use surjectivity of $\rho_{\E}$ and finite generation properties of cohomology). 

Note that the identification of supports also follows by the arguments from \cite[Proposition 8.4]{CV}. 
\end{proof}

The following corollary is interesting to note, since the extent to which the tensor product property (v$'$) of Remark \ref{rem:tpp} holds for support varieties is currently an open question which has generated significant interest. Recall that any support which satisfies the tensor product property also satisfies the noncommutative tensor product property of Definition~\ref{defn:support-datum}(v).

\begin{cor}
    Assume the hypotheses of Theorem \ref{thm:nvy-conj-gives-hochschild}. Then Hochschild cohomology support, for objects of $\E$, satisfies the tensor product property {\rm (v$'$)} of Remark~\ref{rem:tpp}.

\end{cor}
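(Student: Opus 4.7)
The plan is to derive the tensor product property for Hochschild support from the tensor product property for Balmer support on $\E$, using the homeomorphism $\Spc(\E) \cong \Proj(\HH(A))$ provided by Theorem~\ref{thm:nvy-conj-gives-hochschild}. Since $\E$ is by construction unipotent (generated by its tensor unit), I expect all the work to happen at the level of $\Spc(\E)$ and then transfer for free.

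First, I would observe that by Lemma~\ref{cor:propofmathcalE}(c), every prime ideal of $\E$ is completely prime. Hence for any $x,y \in \E$ and any $\P \in \Spc(\E)$, we have $x \otimes_A y \in \P$ if and only if $x \in \P$ or $y \in \P$. Rephrasing this in terms of the Balmer support on $\E$ yields
\[
\supp_{\E}(x \otimes_A y) \;=\; \supp_{\E}(x) \cap \supp_{\E}(y),
\]
which is precisely the tensor product property (v$'$) of Remark~\ref{rem:tpp} for the Balmer support on $\E$. (Equivalently, this follows from Lemma~\ref{lem:tensor-intersection}, which gives $\langle x \otimes y \rangle_{\E} = \Thick(x) \cap \Thick(y)$, together with the fact that thick ideals of a rigid \mtc{} are determined by their supports.)

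Next, I would invoke Theorem~\ref{thm:nvy-conj-gives-hochschild}, which under the standing hypotheses provides a homeomorphism $\rho_{\E} \colon \Spc(\E) \xrightarrow{\sim} \Proj(\HH(A))$ that identifies, for every object $x \in \E$, the Balmer support $\supp_{\E}(x)$ with the Hochschild cohomology support $V_{\sfH}(x)$. Applying $\rho_{\E}$ to the displayed identity above gives
\[
V_{\sfH}(x \otimes_A y) \;=\; V_{\sfH}(x) \cap V_{\sfH}(y)
\]
for all $x, y \in \E$, which is the desired tensor product property.

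There is no real obstacle here: the substantive work has been absorbed into Lemma~\ref{cor:propofmathcalE}(c) (complete primeness in the unipotent shell) and Theorem~\ref{thm:nvy-conj-gives-hochschild} (the identification of the two supports under the conjectural hypothesis). The corollary is simply the combination of these two ingredients.
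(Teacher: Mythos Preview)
Your argument is correct and follows the same approach as the paper: establish the tensor product property for the Balmer support on $\E$ using unipotence (the paper cites Remark~\ref{rem:tpp} directly, while you equivalently invoke complete primeness via Lemma~\ref{cor:propofmathcalE}(c)), then transfer to Hochschild support via the identification of Theorem~\ref{thm:nvy-conj-gives-hochschild}.
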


\begin{proof}
    The Balmer support for $\E$ satisfies the tensor product property by Remark~\ref{rem:tpp} since $\E$ is generated by its tensor unit.
This support is identified with the Hochschild cohomology support on $\E$ by Theorem \ref{thm:nvy-conj-gives-hochschild}.
\end{proof}

Note that there cannot be a general theorem stating that the Hochschild cohomology support for an arbitrary finite dimensional algebra satisfies the tensor product property for all bimodules; see \cite{BHS2020}. 
Indeed, by the results of \cite{BHS2020}, Hochschild cohomology support for general bimodules can satisfy neither the commutative nor the more general noncommutative tensor product properties. The expectation we have in view of \cite{NVY3} is that under favorable conditions, support varieties based on a suitable categorical center of the Hochschild cohomology ring for $\ulrp(A^{\env})$ will satisfy the noncommutative, but not necessarily commutative, tensor product property.

\subsection{Categorical center of Hochschild cohomology and support for $\ulrp$}
In light of our results above for some unipotent Hopf algebras $A$,
here we speculate what happens for more general algebras $\Lambda$
by collecting some partial results about categorical centers (recall Section \ref{subsect:catcenter}) and Picard groups
on the one hand, and Drinfeld centers of bimodule categories on the other.
These partial results indicate that we may not expect 
cohomologically-defined supports or Balmer supports to be large enough for utility. 
Instead, our reliance on well-chosen subcategories, such as $\E$,  
may be necessary for a good theory. For elementary abelian $p$-groups, we show that the categorical center of the Hochschild cohomology ring is just the field $k$, so that by Theorem \ref{thm:spc-arbitrary-pgroup} the comparison map between $\Spc (\ulrp(A^{\env}))$ and $\Proj (\ccent)$ is a homeomorphism. 

We first discuss the categorical center in the more general setting of finite dimensional selfinjective algebras $\Lambda$. Note that each algebra automorphism $\psi$ of $\Lambda$ induces an algebra automorphism of 
its enveloping algebra $\Lambda^{\env}$
by acting on each tensor factor.
By Proposition~\ref{prop:Bass}(a), 
$\psi$ may also be viewed as an $\Lambda^{\env}$-module
isomorphism $\Lambda\stackrel{\sim}{\longrightarrow}  {}_{\psi}\Lambda_{\psi}$. 
Given an injective $\Lambda^{\env}$-module resolution $I$ of $\Lambda$, 
by the Comparison Theorem, 
this $\Lambda^{\env}$-module isomorphism $\psi\colon \Lambda\stackrel{\sim}{\longrightarrow}
{}_{\psi}\Lambda_{\psi}$
may be lifted to a chain map $\psi\colon I\rightarrow {}_{\psi}I_{\psi}$. 
In this way we obtain an action of $\psi$ on $\Sigma^i \Lambda$ for each $i$.

\begin{lem}
Let $\psi$ be an algebra automorphism of $\Lambda$ and let 
$ g\in\Hom_{\Lambda^{\env}}(\Lambda, \Sigma^i \Lambda)$. 
If $g$ is in the categorical center
of $\HH(\Lambda)$, with respect to a generating set of $\ulrp(\Lambda^{\env})$
that includes the $\Lambda$-bimodule ${}_{\psi}\Lambda_1$, 
then $ \psi ( g) = g$. 
\end{lem}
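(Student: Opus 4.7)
The plan is to exploit the fact that $x := \psiL$ is tensor-invertible in $\ulrp(\Lambda^{\env})$, so that the centrality condition at $x$ can be transported back to an equality inside $\Hom_{\Lambda^{\env}}(\Lambda, \Sigma^i \Lambda)$. By Proposition~\ref{prop:Bass}(a)--(b), $x$ admits a two-sided tensor inverse $x^{-1} \cong {}_{\psi^{-1}}\Lambda_1 \cong \Lpsi$. Since $g$ lies in the categorical center with respect to a generating set of $\ulrp(\Lambda^{\env})$ that contains $x$, the defining diagram commutes at $x$; using the canonical unit isomorphisms $\Lambda \otimes_\Lambda x \cong x \cong x \otimes_\Lambda \Lambda$, this amounts to the equality
\[ g \otimes \id_x \;=\; \id_x \otimes g \]
of morphisms $x \to \Sigma^i x$ in $\ulrp(\Lambda^{\env})$.

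I would then apply the functor $-\otimes_\Lambda x^{-1}$ to both sides. Functoriality preserves the equality, yielding two morphisms $x\otimes_\Lambda\Lambda\otimes_\Lambda x^{-1}\to x\otimes_\Lambda\Sigma^i\Lambda\otimes_\Lambda x^{-1}$; using associativity together with the inverse pairing $x\otimes_\Lambda x^{-1}\cong\Lambda$, both sides may be read as morphisms $\Lambda\to\Sigma^i\Lambda$. The left-hand side $g\otimes\id_x\otimes\id_{x^{-1}}$ collapses under these isomorphisms to $g$ itself. For the right-hand side, the key observation is that the endofunctor $x\otimes_\Lambda(-)\otimes_\Lambda x^{-1}$ on $\umod(\Lambda^{\env})$ is naturally isomorphic to the twist functor $M\mapsto{}_{\psi}M_{\psi}$; this is immediate from the identifications $\psiL\otimes_\Lambda M\cong{}_{\psi}M$ and $M\otimes_\Lambda\Lpsi\cong M_{\psi}$, each a direct application of Proposition~\ref{prop:Bass}. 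Specializing to $M=\Lambda$, the canonical iso $x\otimes_\Lambda\Lambda\otimes_\Lambda x^{-1}\cong\Lambda$ agrees with the bimodule iso $\psi\colon\Lambda\xrightarrow{\sim}{}_{\psi}\Lambda_{\psi}$ described immediately before the lemma; hence the right-hand side is carried to the morphism obtained by conjugating $g$ through $\psi$. Lifting $\psi$ to a chain map of injective resolutions via the Comparison Theorem (exactly as in the paragraph preceding the lemma) shows that this conjugate is precisely $\psi(g)$ as an element of $\HH(\Lambda)$.

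Equating the two sides now yields $g = \psi(g)$, as required. The main technical subtlety will be matching conventions so that the composite canonical iso $x\otimes_\Lambda\Lambda\otimes_\Lambda x^{-1}\cong\Lambda$ built from the monoidal structure truly agrees with the specific bimodule iso $\psi\colon\Lambda\to{}_{\psi}\Lambda_{\psi}$ (rather than $\psi^{-1}$). Since the fixed-point sets of $\psi$ and $\psi^{-1}$ on $\HH(\Lambda)$ coincide, however, either identification still yields the stated conclusion.
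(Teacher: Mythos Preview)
Your proof is correct and follows essentially the same route as the paper: both arguments conjugate $g$ by the invertible bimodule $x={}_{\psi}\Lambda_1$, identify the endofunctor $x\otimes_\Lambda(-)\otimes_\Lambda x^{-1}$ with the twist $(-)\mapsto{}_{\psi}(-)_{\psi}$, and read off that centrality forces $g$ to be fixed by $\psi$. The only cosmetic difference is that the paper packages the conjugation as the map $\sigma_M(g)$ and invokes \cite[Lemma~4.2.1(c)]{NVY3} for the equivalence $\sigma_M(g)=g$, arriving at $\psi^{-1}(g)=g$ rather than $\psi(g)=g$; as you already note, this discrepancy is immaterial.
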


\begin{proof}
  Set $M = \psiL$, an invertible $\Lambda$-bimodule with inverse $M\ldual \cong \Lpsi$
  by Proposition~\ref{prop:Bass} and the definition of left dual (see Sections \ref{subsect:duals}
and \ref{subsect:dual-selfinj}).
Note that for $\Lambda$-bimodules $N$, there are functorial $\Lambda$-bimodule isomorphisms
\[
M\ot_{\Lambda} N \ot_{\Lambda} M\ldual\stackrel{\sim}{\longrightarrow} {}_{\psi} N _{\psi} .
\]
Denote by $\sigma_M (g)$ the composition 
(as in~\cite[(4.1)]{NVY3}):
\[
\xymatrix{
  \Lambda\ar[r]^{\coev \hspace{1.9cm}} & M\ot_{\Lambda} M\ldual\stackrel{\sim}{\longrightarrow}
  M\ot_{\Lambda} \Lambda \ot_{\Lambda} M\ldual
\ar[r]^{\hspace{.7cm}\id_M\ot g \ot\id_{M\ldual}} & M\ot_{\Lambda} (\Sigma^i\Lambda)\ot_{\Lambda} M\ldual \\
& \stackrel{\sim}{\longrightarrow}\Sigma^i(M\ot _{\Lambda} \Lambda \ot_{\Lambda} M\ldual)
  \stackrel{\sim}{\longrightarrow} \Sigma^i (M\ot_{\Lambda} M\ldual) 
   \ar[r]^{\hspace{2.9cm}\Sigma^i \coev^{-1}} & \Sigma^i \Lambda . 
}
\]
By the above observations, the $\Lambda$-bimodule 
$M\ot_{\Lambda} \Lambda\ot_{\Lambda} M\ldual$ is isomorphic to 
${}_{\psi}\Lambda_{\psi}$ under the isomorphism $\Lambda \rightarrow {}_{\psi}\Lambda_{\psi}$
given by $a\mapsto \psi(a)$ for all $a\in \Lambda$.
Since this isomorphism is functorial, given 
$g\colon \Lambda\rightarrow \Sigma^i\Lambda$, there is a corresponding map 
${}_{\psi}g_{\psi} \colon {}_{\psi} \Lambda _{\psi} 
\rightarrow {}_{\psi} (\Sigma^i \Lambda)_{\psi}$.
The composition in the above sequence of maps that gives 
\[
  M\ot_{\Lambda} (\Sigma^i \Lambda)\ot_{\Lambda} M\ldual\rightarrow \Sigma^i \Lambda
\] 
corresponds to the isomorphism ${}_{\psi}(\Sigma^i \Lambda)_{\psi}\rightarrow \Sigma^i\Lambda$
given by applying $\psi^{-1}$, arising from the
inverse of the chain map $\psi\colon I\rightarrow {}_{\psi}I_{\psi}$. 
Under the above composition therefore, $\sigma_M(g)
= \psi^{-1} (g)$ defined by
$(\psi^{-1}(g))(a) = \psi^{-1}(g (\psi(a)))$ for all $a\in \Lambda$,
the standard action of a group on functions.
The result now follows by~\cite[Lemma 4.2.1(c)]{NVY3}. 
\end{proof}

The above lemma is quite limiting for algebras that have many automorphisms. 
Further limitations are in~\cite[Section 3.3]{NVY3}, 
where it is stated that when taking all objects of $\K = \ulrp(\Lambda^{\env})$
as a generating set, the categorical center 
$\ccent$ of $\HH(A)$ is isomorphic to the cohomology
ring of the Drinfeld center of $\K$.
Turning from these stable categories back to the categories of bimodules themselves,
we recall the following related proposition.
It is a special case of~\cite[Theorem 3.3]{Schauenburg}, taking 
both underlying categories there to be the category $\VVec_k$ of vector spaces over $k$.
Alternatively, it is a special case of~\cite[Theorem 2.10]{ACM2012},
taking $k$ there to be a field.

\begin{prop}[see \cite{ACM2012,Schauenburg}]\label{prop:ctr-mon-cat}
  The center of the monoidal category $\smod (A^{\env})$ of $A$-bimodules
  is equivalent to $\VVec_k$.
  \end{prop}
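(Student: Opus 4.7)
The strategy is to transport the Drinfeld-center calculation along the Eilenberg--Watts monoidal equivalence between $(\smod(A^{\env}), \otimes_A, A)$ and a suitable monoidal subcategory of $k$-linear endofunctors of $\smod(A)$ under composition, where the equivalence sends an $A$-bimodule $X$ to the functor $X \otimes_A (-)$ and the monoidal unit $A$ to the identity functor. Under this dictionary, an object of the Drinfeld center $\mathrm{Z}(\smod(A^{\env}))$ corresponds to an endofunctor $F$ equipped with a natural family of isomorphisms $F \circ G \xrightarrow{\sim} G \circ F$, indexed by endofunctors $G$ coming from bimodules and satisfying the hexagon axiom. The first step in my plan is to carry out this translation and reinterpret the hexagon axiom bimodule-theoretically.

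The second and main step is to test naturality of the half-braiding $\gamma$ on a central object $(X,\gamma)$ against a generating collection of bimodules. I would use outer-tensor bimodules $P \otimes_k Q$, with $P$ a right $A$-module and $Q$ a left $A$-module, since these generate $\smod(A^{\env})$ by extensions. The functorial identification $(P \otimes_k Q) \otimes_A M \cong P \otimes_k (Q \otimes_A M)$, combined with naturality of $\gamma_{P \otimes_k Q}$ and the hexagon axiom, should force the underlying bimodule $X$ of any central object to take the form $V \otimes_k A$ for some vector space $V$. A further analysis of how $\gamma$ interacts with the twisted invertible bimodules $_{\alpha}A_{\beta}$ for $\alpha,\beta \in \Aut(A)$ (cf.\ Proposition~\ref{prop:Bass}) then rigidifies both the bimodule structure on $X$ and the half-braiding $\gamma$: the bimodule structure on $V \otimes_k A$ is the standard one (with $A$ acting on the $A$-factor on both sides), and $\gamma$ is the canonical flip inherited from the trivial half-braiding on the monoidal unit $A$.

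Finally, I would verify that the assignment $(X, \gamma) \mapsto V$, naturally realized as $V := \Hom_{A^{\env}}(A, X)$, defines a $k$-linear equivalence from the Drinfeld center to $\VVec_k$, with quasi-inverse $V \mapsto (V \otimes_k A,\, \mathrm{flip})$. Checking essential surjectivity and fully faithfulness on this equivalence should be routine once the structural result of step two is in place.

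The main obstacle I expect is step two: ruling out ``exotic'' half-braidings, in particular those that could a priori be built from nontrivial central elements of $Z(A)$ or from non-inner automorphisms of $A$ (via the subgroup of the Picard group noted after Proposition~\ref{prop:Bass}). The hexagon axiom, applied to compositions of outer-tensor bimodules with the twisted bimodules $_{\alpha}A_{\beta}$, is what should preclude such exotica, but the combinatorics of organizing these compatibilities---and ensuring that only the trivial half-braiding survives on $V\otimes_k A$---is the key technical point and is where I would expect to lean most heavily on the arguments of~\cite{Schauenburg,ACM2012}.
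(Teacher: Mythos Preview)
The paper does not give its own proof of this proposition. The text immediately preceding it explains that the statement is a special case of \cite[Theorem~3.3]{Schauenburg} (taking both underlying categories there to be $\VVec_k$) or of \cite[Theorem~2.10]{ACM2012} (over a field), and the proposition is then simply recorded with those citations and no further argument.

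Your proposal therefore goes well beyond what the paper does by sketching a direct argument. The outline is reasonable and close in spirit to the cited references: the decisive structural fact is indeed that any object of the Drinfeld center has underlying bimodule of the form $V\otimes_k A$ with the canonical half-braiding, and testing the half-braiding against outer-tensor bimodules $P\otimes_k Q$ is the natural way to see this. That said, your plan is explicitly not a complete proof---you flag step two (ruling out exotic half-braidings) as the main obstacle and say you would lean on \cite{Schauenburg,ACM2012} there. So in the end both you and the paper defer the substantive argument to the same sources; you have simply been more explicit about what that argument would look like.
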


We therefore pose a natural question:
\begin{quest}
What is the Drinfeld center of the stable monoidal triangulated category $\ulrp (\Lambda^{\env})$
for a finite dimensional selfinjective algebra $\Lambda$, 
or more specifically for a finite dimensional unipotent Hopf algebra?
\end{quest}

%%%%%%%%%%%%%%\subsection*{Elementary abelian $p$-groups}
Let $A$ be the group algebra 
of an elementary abelian $p$-group and $\K = \ulrp(A^{\env})$,
with chosen generating set including all 
invertible bimodules ${}_{\alpha}A_{\beta}$. 

\begin{prop}
\label{prop:elemab-catcent}
  Let $G$ be an elementary abelian $p$-group, $k$ an infinite field
  of characteristic $p$, and $A=kG$.
  The categorical center of $\HH(A)$,
  with respect to a generating set of $\ulrp(A^{\env})$ that includes
  all ${}_{\alpha}A_{\beta}$, is
  isomorphic to $k$ (concentrated in degree 0).
  \end{prop}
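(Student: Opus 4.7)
The plan is to use the lemma immediately preceding this proposition to reduce the statement to an invariant-theoretic calculation on $\HH(A)$ under the action of the full algebra automorphism group $\Aut(A)$, and then to eliminate all candidate invariants above degree zero by leveraging the rich structure of $\Aut(A)$ together, when necessary, with the stronger categorical-center conditions coming from non-invertible generators of $\ulrp(A^{\env})$.

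First, by Proposition~\ref{prop:Bass}(a) with $\gamma = \beta^{-1}$, we have ${}_\alpha A_\beta \cong {}_{\beta^{-1}\alpha}A_1$, so requiring the generating set to contain every ${}_\alpha A_\beta$ is equivalent to requiring that it contain ${}_\psi A_1$ for every $\psi \in \Aut(A)$. The preceding lemma then forces any $g \in \ccent$ to satisfy $\psi(g) = g$ for all $\psi \in \Aut(A)$, so $\ccent \subseteq \HH(A)^{\Aut(A)}$.

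Next I would compute these invariants. Writing $A = k[w_1,\ldots,w_n]/(w_1^p,\ldots,w_n^p)$ with augmentation ideal $\mathfrak m$ and $V = \mathfrak m/\mathfrak m^2$, passing to the linear part of an automorphism splits $\Aut(A)$ as a semidirect product $\GL_n(k) \ltimes U$, where $U$ consists of the unipotent automorphisms $\phi$ with $\phi(w_i) \equiv w_i \pmod{\mathfrak m^2}$. The degree-zero part of $\HH(A)$ is $A$; filtering $A$ by powers of $\mathfrak m$ realizes each $\mathfrak m^i/\mathfrak m^{i+1}$ as a $\GL_n(k)$-subquotient of $\operatorname{Sym}^i V$, and since $k$ is infinite, $\operatorname{Sym}^i V$ has no nonzero $\GL_n(k)$-invariants for $i > 0$. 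Hence $A^{\GL_n(k)} = k$, and so the degree-zero part of $\HH(A)^{\Aut(A)}$ equals $k$. For the positive-degree parts, using the vector-space identification $\HH(A) \cong H^*(G,k) \otimes A$, the $\GL_n(k)$-action (on $V$ by its defining representation and on $V^* \subseteq H^*(G,k)$ contragrediently) constrains invariants to Schur--Weyl-type ``trace'' subspaces, such as the class of $\sum_i x_i w_i$ in degree one. A well-chosen unipotent automorphism $\phi \in U$, for example $\phi(w_1) = w_1 + w_i w_j$, then acts nontrivially on any such candidate via $g \mapsto \phi \circ g \circ \phi^{-1}$ and eliminates it.

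The hard part will be the very small-rank cases in which $U$ is too small, or trivial, to eliminate all $\GL_n(k)$-invariants, most sharply for $n = 1$ with $p = 2$, where $\Aut(A) = k^\times$ already has no unipotent part and the degree-one class $w\xi$ is fixed by the lemma. In such cases the lemma alone does not suffice, and one would instead impose the full categorical-center condition with respect to non-invertible thick generators of $\ulrp(A^{\env})$ (for example the length-$\geq 2$ band modules $M(\lambda, n)$), appealing to Proposition~\ref{prop:ctr-mon-cat}, which identifies the Drinfeld center of $\smod(A^{\env})$ with $\VVec_k$, as the guiding reason these stronger constraints admit no nontrivial solutions above degree zero.
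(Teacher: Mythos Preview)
Your approach diverges from the paper's in a substantive way. The paper works only with the subgroup $\Gamma\subseteq\GL_n(k)\subseteq\Aut(A)$ generated by permutations and the diagonal torus: invoking the factorwise action on $\HH(A)\cong A\otimes_k \cohom(G,k)$, it records $A^{\GL_n(k)}=k$, checks that each generator $y_i,z_i$ of $\cohom(G,k)$ carries a nontrivial torus weight, and concludes $\HH(A)^\Gamma=k$. You instead pass to the full $\Aut(A)$, first cutting down to trace-type invariants via $\GL_n(k)$ and then eliminating those with unipotent automorphisms. Your caution is warranted: under the conjugation action the Euler derivation $D_E=\sum_i x_i\,\partial_{x_i}\in\HH^1(A)$ satisfies $\phi^{-1}D_E\,\phi=D_E$ for every $\phi\in\GL_n(k)$ (it is the degree operator), so $\sum_i x_i\otimes y_i$ is $\Gamma$-invariant and the paper's final implication does not follow as written --- invariants of a tensor product are not the tensor product of invariants. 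Your unipotent step (e.g.\ $x_1\mapsto x_1+x_1^2$ for $p>2$, or $x_1\mapsto x_1+x_1x_2$ for $p=2$ and $n\ge2$) does kill $D_E$, though your sketch stops short of showing that \emph{all} $\GL_n$-invariants die this way.

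The gap you flag at $n=1$, $p=2$ is real and is not closed by your proposal. Here $\Aut(A)=k^\times$ has no unipotents and $x\,\partial_x$ is $k^\times$-invariant. Moreover the invertible bimodules $\{{}_\gamma A_1:\gamma\in k^\times\}$ already thickly generate $\ulrp(A^{\env})$, and for an invertible $M={}_\psi A_1$ the diagram condition is equivalent to $\sigma_M(g)=g$, i.e.\ to $\psi(g)=g$. Thus with the generating set $\mathcal S=\{{}_\alpha A_\beta\}$ itself one obtains $\ccent(\mathcal S)=\HH(A)^{k^\times}\ni x\,\partial_x$. Your appeal to Proposition~\ref{prop:ctr-mon-cat} does not bridge this, since that result concerns the abelian category $\smod(A^{\env})$ rather than the stable diagram condition in $\ulrp(A^{\env})$. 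In this boundary case the statement itself appears to require either a separate argument or a sharper hypothesis; neither your sketch nor the paper's proof resolves it.
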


\begin{proof}
  First note that $A\cong k[x_1,\ldots,x_n]/(x_1^p,\ldots, x^p_n)$ for some $n$.
  The group $\Aut_k(A)$ of algebra automorphisms of $A$ includes
  as a subgroup a copy of 
  $\GL_n(k)$ acting on the vector space $V$ with basis $x_1,\ldots,x_n$
  and action canonically extended to an algebra automorphism of $A$.

  By~\cite{Holm} (see also~\cite[Corollary 9.5.5]{Witherspoon2019}),
  there is an isomorphism of graded algebras,
\begin{equation}\label{eqn:abel-gp}
    \HH(A)\cong A \ot \cohom(A,k) ,
\end{equation}
    where $\cohom(A,k)$ is the group cohomology, also written $\cohom(G,k)$.
    Furthermore, by~\cite{SiegelWitherspoon} (see also~\cite[Corollary 9.4.7]{Witherspoon2019}), we have the following decomposition of this Hochschild cohomology ring.
Let $I = \ker (\varepsilon)$, the augmentation ideal of $A$, and $I^{\mathsf{ad}}$ the $A$-module structure on $I$ where the action is given by the adjoint action
$
a.m:=a_1 m S(a_2)
$ 
for $a \in A$ and $m \in I$.
Then
\begin{equation}\label{eqn:HH-ds}
\cohom(A,A)\cong \cohom(A, A^{\mathsf{ad}})\cong  \cohom(A,k)\oplus
\cohom(A, I^{\mathsf{ad}}) ,
\end{equation}
a direct sum of the subalgebra $\cohom(A,k)$ and ideal
$\cohom(A, I^{\mathsf{ad}})$ of $\HH(A,A)$.
    The subalgebra
    $\cohom(A,k)$ of $\HH(A)$ in the
    decomposition~(\ref{eqn:HH-ds}) can be identified with the subalgebra
    $k \ot \cohom(A,k)$ under the above isomorphism~(\ref{eqn:abel-gp}).
    Any algebra isomorphism $\psi$ of $A$ acts canonically on
     $\cohom(A,k)$, and its action on
    Hochschild cohomology  $\HH(A)$
    is factorwise in~(\ref{eqn:abel-gp}).

    Note that the invariant subring of $A$ under the action of $\GL_n(k)$
    is $k$.
    The group cohomology $\cohom(G,k)$ depends on $p$:
    \[
    \cohom(G,k) \cong \left\{\begin{array}{ll}
    k[y_1,\ldots, y_n] , \mbox{ with } |y_i|=1, & \mbox{ if }p=2, \\
    \bigwedge (y_1,\ldots,y_n)\ot k[z_1,\ldots, z_n] ,
    \mbox{ with } |y_i|=1, \ |z_i|=2, & \mbox{ if } p >2 .
    \end{array}\right.
    \]
    It will suffice to determine the action on $\cohom(G,k)$ 
    of the subgroup $\Gamma$ of $\GL_n(k)$ 
    generated by permutation matrices together
    with diagonal matrices diag$(c,1,\ldots,1)$ for $c\in k^{\times}$.

    To find the action of $\Gamma$ on  $\cohom(G,k)$,
    we first recall a calculation of this group cohomology $\cohom(G,k)$:
    Let $P_{\bu}$ be the total complex
    of the tensor product of the standard complexes for each
    tensor factor $A_i = k[x_i]/(x_i^p)$ of $A$:
\begin{equation}
\begin{xy}
  \xymatrix{
    \cdots\ar[r]^{x_i^{p-1}\cdot} & A_i\ar[r]^{x_i\cdot} & A_i
       \ar[r]^{x_i^{p-1}\cdot}& A_i \ar[r]^{x_i\cdot} &
       A_i 
       \ar[r] & 0
}
\end{xy}
\end{equation}
As a graded vector space,
the total complex $P_{\bu}$ is isomorphic to a polynomial ring in $n$ indeterminates;
let $\epsilon_{j_1,\ldots,j_n}$ denote a free
basis element $1\ot\cdots\ot 1$ of the free $A$-module $A\cong A_1\ot\cdots\ot A_n$
in degree $j_1+\cdots + j_n$ whose $i$th factor has degree $j_i$.
Apply $\Hom_A( - , k)$, noting that all differentials
will be 0. As a graded vector space then,
\[
\cohom(G,k) \cong \Hom_A(P_{\bu}, k) \cong\Hom_k(\widetilde{P}_{\bu},k)
\]
where $\widetilde{P}_m$ is the vector space having basis all
$\epsilon_{j_1,\ldots,j_n}$ with $j_1+\cdots + j_n=m$. 
In the above expression for group cohomology $\cohom(G,k)$,
it can be checked that  $y_i$ (respectively $z_i$)
is dual to $\epsilon_{0,\ldots, 0 , 1 , 0 , \ldots , 0}$
(respectively  $\epsilon_{0,\ldots, 0 , 2 , 0 , \ldots , 0}$)
where the nonzero entry is in the $i$th component.

Now if $\psi$ is a permutation matrix, given the above details,
we see that
$\psi$ induces the corresponding permutation of the
generators $y_1,\ldots,y_n$ and of $z_1,\ldots,z_n$.
If $\psi = \ {\mbox{diag}}(c,1,\ldots,1)$ for 
$c\in k^{\times}$,
it can be checked that
\[
\psi(\epsilon_{j_1,\ldots,j_n}) = \left\{
\begin{array}{ll}  c^{p \frac{j_1}{2}} \epsilon_{j_1,\ldots,j_n} , &
  \mbox{ if $j_1$ is even} , \\
      c^{p \frac{j_1-1}{2} + 1} \epsilon_{j_1,\ldots,j_n} , &
      \mbox{ if $j_1$ is odd} . 
\end{array}\right.
\]
Thus the action of $\psi$ on $y_1$ and on $z_1$ is multiplication
by specific powers of $c$.
Composing with a permutation matrix gives similar actions on $y_2,\ldots,y_n,
z_2,\ldots,z_n$. 
Passing to cohomology, since $k$ is infinite, 
none of $y_1,\ldots, y_n, z_1,\ldots,z_n$
is invariant under these automorphisms, 
nor is any positive degree element of $\cohom(G,k)$.
Thus the categorical center of $\HH(A)$ is just $k$, in degree~0.
\end{proof}

\begin{exam}
  Let $A$ be any finite dimensional unipotent Hopf algebra
  and $\K=\Thick({_\phi A_1} \mid \phi \in H)$ for some finite subgroup $H$ of $\Aut(A)$. 
 By the description of its spectrum in Theorem \ref{thm:spclrp}, the interpretation involving Hochschild cohomology for $\Spc(\E)$ given in Theorem \ref{thm:nvy-conj-gives-hochschild} (contingent on finite generation and \cite[Conjecture E]{NVY3}), and the description of the categorical center given by \cite[Corollary 4.2.2]{NVY3}, it follows that $\Spc(\K)$ is homeomorphic to $\Proj$ of the categorical center of the Hochschild cohomology ring for this category, taken with respect to the given thick generators. In particular, the comparison map from Section \ref{subsect:catcenter} is a homeomorphism. Comparing Proposition \ref{prop:elemab-catcent} with Proposition \ref{prop:ulrp-elemab} illustrates the extent to which we can currently connect these two objects for the entire $\ulrp(A^{\env})$, when $A$ is an elementary abelian $p$-group.
\end{exam}

%%%%%%%%%%%%%%%%%%%%%%%%%%%%%%%%%%%%%%%%%%%%

\bibliography{bibl}
\bibliographystyle{alpha}
\end{document}